\documentclass[12pt,a4]{amsart}

\usepackage{amssymb,mathrsfs,amsmath,dsfont,array}% Using array, you can center the heading line of a tabular.
\usepackage{multirow,arydshln}
\usepackage{tikz}
\usetikzlibrary{arrows,positioning,decorations.pathmorphing,shapes}
\usepackage{capt-of}% It helps you to use caption for tikzpictures as well.
\usepackage{mathbbol}     % Using these you can use mathbb fonts for lowercase characters as well.
\usepackage{verbatim} %Allows use of \begin{comment} ... \end{comment}
\usepackage[all,cmtip]{xy}
\usepackage{amsbsy}% this helps us to use bold fonts for mathematical symbols as well. We then use $\pmb{any symbol}.$
\usepackage{amsmath}% this allows \tag{any symbol} to be used  for the equations.
\usepackage{nccmath}% this allows you to have equation with position on the left. \begin{fleqn}\begin{flalign}\end{fleqn}\end{fleqn}

\usepackage{hyperref}
\usepackage[normalem]{ulem}
\newtheorem{Thm}{Theorem}[section]
\newtheorem{lem}[Thm]{Lemma}
\newtheorem{Pro}[Thm]{Proposition}

\theoremstyle{definition}
\newtheorem{deft}[Thm]{Definition}

\theoremstyle{remark}
\newtheorem{rem}[Thm]{Remark}

\numberwithin{equation}{section}

\newcommand{\cA}{\mathcal{A}}
\newcommand{\cH}{\mathcal{H}}

\newcommand{\cN}{\mathcal{N}}
\newcommand{\cQ}{\mathcal{Q}}

\newcommand{\End}{\mathop{{\rm End}}\nolimits}

\newcommand{\id}{\mathop{\rm id}}

\renewcommand{\tilde}{\widetilde}

\def\a{\alpha}

\def\aa{\mathcal A}

\def\adot{\dot{\alpha}}

\def\andd{\quad\hbox{and}\quad}

\def\b{\beta}

\def\bs{\boldsymbol}

\def\bl4{B_{\ell\geq4}}

\def\bbbc{{\mathbb C}}

\def\d{\delta}
\def\D{\Delta}

\def\e{\epsilon}

\def\fg{\mathfrak{g}}

\def\scg{\mathscr{G}}

\def\hh{{\mathcal H}}

\def\fh{\mathfrak{h}}

\def\ii{\mathcal{I}}
\def\jj{\mathcal{J}}

\def\fk{\mathfrak{k}}

\def\lam{\lambda}

\def\LL{\mathcal{L}}

\def\fl{\mathfrak{L}}

\def\ep{\epsilon}
\def\fm{(\cdot,\cdot)}

\def\bbbr{{\mathbb R}}

\def\supp{\hbox{\rm supp}}

\def\1k{\frac{1}{k}}
\def\op{\oplus}
\def\ot{\otimes}

\def\sub{\subseteq}
\def\sg{\sigma}

\def\pf{\noindent{\bf Proof. }}

\def\sspan{\hbox{\rm span}}

\def\bbbz{{\mathbb Z}}

\def\1il{1\leq i\leq\ell}

\newcommand{\Bigop}[2]{\raisebox{0.1ex}{\scalebox{.7}{$\displaystyle \bigoplus_{#1}^{#2}\;$}}}

\begin{document}
\title{Zero-level integrable modules over twisted affine Lie superalgebras}
\thanks{$^\star$ Corresponding author.}
\thanks{2020 Mathematics Subject Classification: 17B10, 17B67.}
\thanks{Key Words: Finite weight modules, 
	twisted affine Lie superalgebras}
%\thanks{$^\ast$ The third author was supported by a grant from IPM ...}
\thanks{The third author's research was in part supported by a grant from IPM (No. 1403170414). }
\maketitle
\pagestyle{myheadings}

\markboth{}{}
\centerline{ Hajar Kiamehr$^{\rm a}$, Senapathi Eswara Rao$^{\rm b}$, Malihe Yousofzadeh$^{\rm a,c, \star}$}

\centerline{$^{\rm a}$ {\scalebox{0.65} {Department of Pure Mathematics, Faculty of Mathematics and Statistics, University of Isfahan,}}}\centerline{{\scalebox{0.65} { P.O.Box 81746-73441, Isfahan, Iran,}}}
\centerline{$^{\rm b}$ {\scalebox{0.65} {Tata Institute of Fundamental Research, Mumbai, India,}}}

\centerline{$^{\rm c}${\scalebox{0.65} { School of Mathematics
			Institute for Research in
			Fundamental Sciences (IPM), }}}
\centerline{{\scalebox{0.65} {P.O. Box: 19395-5746, Tehran, Iran.}
}}
\centerline{{\scalebox{0.65} {Email addresses:  hkiamehr@sci.ui.ac.ir (Hajar Kiamehr), senapati@math.tifr.res.in (Senapathi Eswara Rao),}}}	
\centerline{{\scalebox{0.65} {		 ma.yousofzadeh@sci.ui.ac.ir \& ma.yousofzadeh@ipm.ir (Malihe Yousofzadeh).			
}}}

\begin{abstract} 
	The main result of this paper is the characterization of zero-level integrable finite weight modules, over twisted affine Lie superalgebras. We prove that such a module is parabolically induced from a module which is obtained, in a prescribed way, from a module over a Lie algebra $\mathscr{L}$ which is either a  $\bbbz$-graded abelian Lie algebra or a direct sum of  a $\bbbz$-graded abelian Lie algebra and the so-called quadratic Lie superalgebra $\mathcal{Q}$. We give also a complete characterization of both finite dimensional simple  $\mathcal{Q}$-modules as well as bounded finite weight $\bbbz$-graded simple $\mathcal{Q}$-modules.
\end{abstract}

\section{introduction} 
It is more than 50  years when affine Lie (super)algebras have been defined and since then, there have been  a lot of attempts to characterize their simple modules; see  \cite{BL3}-\cite{CP2}, \cite{DG}, \cite{FS}-\cite{KW}, \cite{you8}-\cite{you10} among  others. 

An affine Lie superalgebra,  which is the super version of an affine Lie algebra, has a root space decomposition with respect to a finite dimensional Cartan subalgebra $\hh$. An affine Lie superalgebra with zero odd part is just an affine Lie algebra.  According to the classification, an affine Lie superalgebra with nonzero odd part is either of type $Y^{(1)}$ where $Y$ is the type of basic classical simple Lie superalgebra with nonzero odd part or of one of the types $D(m+1,n)^{(2)},A(2m,2n)^{(4)},A(2m-1,2n)^{(2)}$ or $A(2m-1,2n-1)^{(2)};$ see \cite{van-thes}. The 
corresponding  root system of an affine Lie superalgebra $\LL$ is decomposed into three parts: real roots (those roots  which are not self-orthogonal), imaginary roots (those roots which are orthogonal to all  roots) and non-singular roots (those roots which are self-orthogonal but not orthogonal to the entire root system); see \cite{DSY} and \cite{Y3} for the details.  
An $\LL$-module is called a weight module if it has a weight space decomposition $\LL=\Bigop{\a\in\hh^*}{}\LL^\a$ with respect to $\hh$ and called a finite weight module if all weight spaces are finite dimensional.
The  affine Lie superalgebra $\LL$ has a canonical central element $c\in \hh$ acting on a simple  weight module as a scalar called the level of a module.

There exist two  determinative issues in the characterization of simple finite weight modules; the first one is that if the level is zero or not and the second one is the behavior of the actions of  real root vectors.   One knows that real root vectors act on a simple finite weight module either injectively or locally nilpotently; this in turn gives different classes of $\LL$-modules. One of the important classes is the class of  finite weight modules on which real root vectors act locally nilpotently; such modules are called integrable modules. For almost all affine Lie superlagbras with nonzero odd part, there is no simple integrable finite weight modules of nonzero-level; see \cite{E1}, \cite{Y1}. The characterization of simple integrable finite weight modules of level zero over untwisted affine Lie superalgebras has been studied in \cite{Rao-Zhao} and \cite{Wu-Zhang}. In this work we characterize simple integrable finite weight modules of  level zero over twisted affine Lie superalgebras.

The set of imaginary roots of an affine Lie superalgebra $\LL$ is a free abelian group of rank one; say, e.g., $\bbbz\d.$  We show that the characterization of zero-level simple integrable finite weight modules are reduced to the characterization of zero-level simple $\hh$-weight modules over $\Bigop{k\in\bbbz}{}\LL^{k\d}.$ As the level is zero,  we actually deal with simple $\hh/\bbbc c$-weight modules over $\mathfrak{L}:=\Bigop{k\in\bbbz}{}\LL^{k\d}/\bbbc c.$ The algebra $\fl$ is a semi-direct product of a Lie superalgebra $\mathscr{L}$ with $\hh/\bbbc c$. To get the  characterization of simple integrable finite weight modules, we need to characterize $\bbbz$-graded simple $\mathscr{L}$-modules. In the non-super cases and all super cases other than type $A(2m,2n)^{(4)},$ $\mathscr{L}$ is a $\bbbz$-graded abelian Lie algebra whose $\bbbz$-graded simple $\mathscr{L}$-modules are known in the literature,  but in the type $A(2m,2n)^{(4)},$  $\mathscr{L}$ is a direct sum of a  $\bbbz$-graded abelian Lie algebra and a new $\bbbz$-graded Lie superalgebra which is not abelian; we call this new Lie superalgebra the quadratic Lie superalgebra. It is \[\cQ=\underbrace{s^2\bbbc[s^{\pm4}]\op t^2\bbbc[t^{\pm4}]}_{\cQ_0}\op \underbrace{t\bbbc[t^{\pm4}]\op t^{-1}\bbbc[t^{\pm4}]}_{\cQ_1}\]
with the bracket 
\begin{equation*}\label{bracket}\begin{array}{lll}
		~[\cQ_0,\cQ_0]=[t^2\bbbc[t^{\pm4}], \cQ_1]:=\{0\},& ~[t^{4k+1},t^{4k'-1}]:=0,&
		~[t^{4k+1},t^{4k'+1}]:=t^{4(k+k')+2},\\
		~[t^{4k-1},t^{4k'-1}]:=-t^{4(k+k')-2},&
		~[s^{4k-2},t^{4k'+1}]:=t^{4(k+k')-1},&
		~[s^{4k+2},t^{4k'-1}]:=t^{4(k+k')+1}.
	\end{array}
\end{equation*}
To obtain  our desired characterization, we need to characterize $\bbbz$-graded simple $\cQ$-modules with finite dimensional homogenous spaces.  We show that  these modules are kind of loop modules based on  finite dimensional simple $\cQ$-modules. Moreover, we show that a finite dimensional simple $\cQ$-module is in fact a finite dimensional simple module over a Clliford superalgebra with some compatibility property.

Here is the structure of the paper: Following this introduction, the paper has three additional sections. In Section~2, we prepare the necessary preliminaries for the main theorem, including characterizations of certain modules over graded commutative associative algebras and  Clifford superalgebras; we address these topics in dedicated subsections. In Section~3, we characterize $\bbbz$-graded simple $\cQ$-modules with finite dimensional homogenous spaces. The final section is devoted entirely to the proof of the main theorem.

\section{Preliminaries}
\subsection{Conventions} Throughout this paper, the underlying field for all vector spaces is the field $\bbbc$  of complex numbers and  all tensor products are taken over $\bbbc$ unless otherwise mentioned. The dual space of a vector space $V$ is denoted by $V^*.$ 
We make a convention that for a vector space $V$ and a symmetric or skew-symmetric bilinear form $\fm,$ we denote by `` $ \bar{~~} $ '', the canonical epimorphism $\bar{~~}:V\longrightarrow \bar V:=V/{\rm rad\fm}$; also, we denote the induced nondegenerate bilinear form on $\bar V$ by $\overline{\fm}.$ 

Assume  $L=L_0\op L_1,$ where  $\{0,1\}$ is the abelian group $\bbbz_2$, is an associative superalgebra (resp. a Lie superalgebra). A superspace $V=V_0\op V_1$ is called an  $L$-{\it module} if there is an algebra homomorphism $\pi$ from $L$ to  $\End(V)$ (resp. $\overline{\End(V)}=\End(V)$ that is endowed by the commutator bracket); we refer to $\pi$ as the {\it representation} corresponding to $V.$ 
For $L$-modules $V$ and $W$, by a module homomorphism  from $V$ to $W$, we mean a $\bbbz_2$-graded linear  transformation $\varphi:V\longrightarrow W$ with $\varphi(xv)=x\varphi(v)$ for all $x\in L$ and $v\in V.$

Next assume $G$ is an abelian group and the superalgebra $L$ is $G$-graded. An  $L$-module $V$ is said to be $G$-graded if $V$ is a $G$-graded  superspace and $L^\sigma V^{\sigma'}\sub V^{\sigma+\sigma'}$ for all $\sigma,\sigma'\in G$. $G$-graded submodules are defined in the usual manner. In this setting, $\supp_G(V)$ is defined to be the set of the elements $\sigma\in G$ for which $V^\sigma\neq \{0\}$. If there is no ambiguity, we denote $\supp_G(V)$ by $\supp(V)$. The $G$-graded $L$-module $V$ is called {\it $G$-graded simple} if trivial submodules are the only $G$-graded submodules of $V$ and it is called a {\it finite weight} module if all homogeneous spaces of $V$ are finite dimensional.  A finite weight $G$-graded $L$-module $V$ is called {\it bounded}, if the dimensions of  homogeneous spaces are bounded. 

\subsection{Graded associative algebras}\label{associative-MM} Assume $G$ is an abelian group and $\aa$ is a unital  commutative associative  $G$-graded  algebra. Suppose that
\begin{itemize}
	\item $K$ is a subgroup of $G$ and $V:=\sum_{\tau\in K}\bbbc x^\tau$ is a $\bbbz$-graded vector space with a basis $\{x^\tau\mid \tau\in K\}.$
	\item $\varphi\in \aa^*$ is an algebra homomorphism with $\varphi(\aa^\sg)=\{0\}$ if $\sg\not\in K$ and $\varphi(\aa^\sg)\neq \{0\}$ if $\sg\in K.$ 
\end{itemize}
Set $V(K,\varphi):=V$ and define 
\begin{align*}
	\cdot: &\aa\times V(K,\varphi)\longrightarrow V(K,\varphi)\\
	& (a,x^\tau)\mapsto \varphi(a)x^{\tau+\sigma}\quad(a\in \aa^\sigma,~\sigma\in G,~\tau\in K).
\end{align*}
Set $S:=\left <\supp(\cA)\right >$ to be the subgroup of $G$ generated by $\supp(\cA)$. Then, $V(K,\varphi)$ is a finite weight $S$-graded simple $\aa$-module.

Conversely, assume $\Omega$ is a finite weight $G$-graded simple $\aa$-module.  Fix a nonzero homogenous vector $v_0$ and define 
\begin{align*}
	\pi:&\aa\longrightarrow \Omega\\
	& a\mapsto av_0\quad(a\in \aa)
\end{align*}
Then, $I:={\rm ker}(\pi)$ is a maximal $G$-graded ideal of $\aa$ and as an $\aa$-module, $\Omega\simeq \aa/I.$ Since $V:=\cA/I$ is a $G$-graded simple unital commutative associative algebra and $\bbbc$ is algebraically closed, it  is isomorphic to  a twisted commutative group algebra, say as a vector space, $\cA/I=\Bigop{\tau\in K}{}\bbbc x^\tau$ where $K:=\left<\supp(\Omega)\right >.$ In particular, there is a linear functional $\varphi:\cA\longrightarrow \bbbc$ such that 
\[ax^\tau=\varphi(a)x^{\tau+\sigma}\quad(x\in \cA^\sigma,~ \sigma\in G,~ \tau\in K).\]
Since 
$a(bx^\tau)=(ab)x^\tau$ for $a,b\in \cA$ and $\tau\in k,$ we get that $\varphi$ is an algebra homomorphism. Moreover,  we have
\begin{align*}
	\varphi(\cA^\sigma)\Omega=\cA^\sigma\Omega=\cA^\sigma\sum_{\tau\in K}\Omega^\tau=\sum_{\tau\in K}\cA^\sigma\Omega^\tau\sub \sum_{\tau\in K}\Omega^{\sigma+\tau}=\{0\}\quad (\sigma\in G\setminus K).
\end{align*}
We then get, for $\tau\in K,$ that
\begin{align*}
	\sum_{\sigma\in K}\varphi(\cA^\sigma)\Omega^{\sigma+\tau}=\sum_{\sigma\in G}\varphi(\cA^\sigma)\Omega^{\sigma+\tau}=\cA\Omega^\tau=
	\Omega=\sum_{\sigma\in K}\Omega^{\sigma}=\sum_{\sigma\in K}\Omega^{\sigma+\tau}.
\end{align*}
This implies that $\varphi(\cA^\sigma)\neq \{0\}$ for $\sigma\in K.$ In particular, $\Omega\simeq V(K,\varphi).$
\subsection{Clifford superalgebras}\label{clifford}
Assume $r,s$ are two positive integers and set 

\noindent  $\bullet$ $Q(r)$ to be the associative superalgebra  consisting of all block matrices of the form 
\[\left(
\begin{array}{ll}
	\bs{a}&\bs{b}\\
	\bs{b}&\bs{a}
\end{array}
\right)
\]
where $\bs{a}$ and $\bs{b}$ are $r\times r$-matrices and 

\noindent $\bullet$ $M(r|s)$ to be the associative superalgebra  consisting of all block matrices 
\[\left(
\begin{array}{ll}
	\bs{a}&\bs{b}\\
	\bs{c}&\bs{d}
\end{array}
\right)
\]
where $\bs{a}$ and $\bs{d}$ are respectively, $r\times r$ and $s\times s$-matrices.
Up to isomorphism, $Q(r)$'s and $M(r|s)$'s are the only finite dimensional  simple associative unital superalgebras and up to isomorphism $\bbbc^{r\mid r}$ and $\bbbc^{r|s}$ are the only finite dimensional simple modules over $Q(r)$ and $M(r|s)$ respectively.  

\section{Finite weight $\bbbz$-graded simple $\cQ$-modules}\label{quadratic} Set $\cQ:=\underbrace{s^2\bbbc[s^{\pm4}]\op t^2\bbbc[t^{\pm4}]}_{\cQ_0}\op \underbrace{t\bbbc[t^{\pm4}]\op t^{-1}\bbbc[t^{\pm4}]}_{\cQ_1}$
and define \begin{equation*}\label{bracket}
	{\footnotesize \begin{array}{lll}
			~[\cQ_0,\cQ_0]=[t^2\bbbc[t^{\pm4}], \cQ_1]:=\{0\},& ~[t^{4k+1},t^{4k'-1}]:=0,&
			~[t^{4k+1},t^{4k'+1}]:=t^{4(k+k')+2},\\
			~[t^{4k-1},t^{4k'-1}]:=-t^{4(k+k')-2},&
			~[s^{4k-2},t^{4k'+1}]:=t^{4(k+k')-1},&
			~[s^{4k+2},t^{4k'-1}]:=t^{4(k+k')+1}.
	\end{array}}
\end{equation*}
We call $\cQ$, the {\it quadratic Lie superalgebra.}  Bounded finite weight $\bbbz$-graded simple  $\cQ$-modules  play an important role in the theory of integrable modules over twisted affine Lie superalgebras. So, 
one of the goals of this paper is to classify bounded finite weight $\bbbz$-graded simple  $\cQ$-modules.
Let us start with the following  two identities which are needed for our results and proved easily:  
For a  $\cQ$-module $V$, $r=\pm1,$ $\omega\in V,$ $k\in\bbbz^{>0},$ $p,l\in\bbbz$ and  $\tau=\pm1,$  we have 
\small{\begin{equation}\label{M--20}
		\begin{array}{l}
			\begin{array}{rl}
				(i)~t^{4 p -\tau}(s^{r(4l+2)}(s^{-r(4l+2)}\omega ))
				%&=[t^{4 p -\tau},s^{r(4l+2)}]s^{-r(4l+2)}\omega+s^{r(4l+2)}(t^{4 p -\tau}(s^{-r(4l+2)}\omega)) \\
				%&=s^{-r(4l+2)}([t^{4 p -\tau},s^{r(4l+2)}]\omega)+[[t^{4 p -\tau},s^{r(4l+2)}],s^{-r(4l+2)}]\omega\\
				%&+s^{r(4l+2)}([t^{4 p -\tau},s^{-r(4l+2)}]\omega)+s^{r(4l+2)}(s^{-r(4l+2)}(t^{4 p -\tau}\omega)) \\
				&=-s^{-r(4l+2)}t^{4(p+rl+r\d_{\tau,-r})+\tau}\omega+t^{4 p -\tau}\omega\\
				&-s^{r(4l+2)}t^{4(p-rl-r\d_{\tau,r})+\tau}\omega+s^{r(4l+2)}(s^{-r(4l+2)}(t^{4 p -\tau}w)),
			\end{array}
			\begin{comment}
				\begin{array}{rl}
					(i)~t^{4 p -\tau}(s^{4l+2}(s^{-4l-2}\omega ))
					&=[t^{4 p -\tau},s^{4l+2}]s^{-4l-2}\omega+s^{4l+2}(t^{4 p -\tau}(s^{-4l-2}\omega)) \\
					&=s^{-4l-2}([t^{4 p -\tau},s^{4l+2}]\omega)+[[t^{4 p -\tau},s^{4l+2}],s^{-4l-2}]\omega\\
					&+s^{4l+2}([t^{4 p -\tau},s^{-4l-2}]\omega)+s^{4l+2}(s^{-4l-2}(t^{4 p -\tau}\omega)) \\
					&=-s^{-4l-2}t^{4(p+l+\d_{\tau,-1})+\tau}\omega+t^{4 p -\tau}\omega\\
					&-s^{4l+2}t^{4(p-l-\d_{\tau,1})+\tau}\omega+s^{4l+2}(s^{-4l-2}(t^{4 p -\tau}w)),
				\end{array}
			\end{comment}
			\\\\
			{\begin{array}{rl}
					(ii)~(s^{r(4l+2)})^kt^{4p+\tau}\omega
					&=\displaystyle{\sum_{i=0}^k{k\choose i}t^{4(p+irl)+\tau+2ri}(s^{r(4l+2)})^{k-i}\omega}.
			\end{array}}
		\end{array}
\end{equation}}
It is not hard to prove the following lemma:
\begin{lem}\label{M--22}
	Suppose that $V$ is a $\bbbz$-graded   $\cQ$-module.
	\begin{itemize}
		\begin{comment}\item[(i)] Suppose $r=\pm1$ and  $W$ is a $\bbbz$-graded subspace of $V$ such that 
			\[\sum_{k\in\bbbz^{\geq 0}}\bbbc t^{r(2k+1)}W\sub W\andd \sum_{k\in \bbbz^{\geq 0}}\bbbc t^{r(4k+2)}W=\{0\}.\]
			If  $i> 1$ and  $v\in W$ is a nonzero homogeneous vector with $t^{4rj\pm1}v=0$ for all $1\leq j\leq i-1,$ then, there  is a nonzero homogeneous vector $v'\in W$ satisfying  $t^{4rj\pm1}v'=0$ for all $1\leq j\leq i.$ 
		\end{comment}
		\item[(i)] Suppose $r=\pm1$ and  $W$ is a nonzero $\bbbz$-graded subspace of $V$ such that 
		\[\sum_{k\in\bbbz^{\geq 0}}\bbbc t^{r(2k+1)}W\sub W\andd \sum_{k\in \bbbz}\bbbc t^{4k+2}W=\{0\}.\] Then, there is a nonzero homogenous vector $w$
		with $t^{\pm1}w=0.$ Moreover, if  $i_1,i_2\in\bbbz$ and  $v\in W$ is a nonzero homogeneous vector with $t^{4rj\pm1}v=0$ for all $i_1\leq j\leq i_2,$ then, there  are  nonzero homogeneous vectors $v',v''\in W$ satisfying  $t^{4rj\pm1}v'=0$ for all $i_1\leq j\leq i_2+1$ and    $t^{4rj\pm1}v''=0$ for all $i_1-1\leq j\leq i_2.$ 
		\item[(ii)] Assume  $u\in V$, $r,p=\pm1$ and  $l$  is a positive integer.
		\begin{itemize}
			\item[(1)] If $s^{-r(4l+2)}u=0$ and $ t^{4ri\pm 1}u=0$  for all positive integers $i$, then $ t^{4ri\pm 1}u=0$ for all integers $i$.
			\item[(2)] Suppose $N$ is a positive integer with  $N>2l+1$. If 
			$s^{r(4l+2)}u=0$ and $t^{4pi\pm 1}u=0$ for all $1\leq i\leq N-1$, then $t^{4ri\pm 1}u=0$ for all $i\in \bbbz^{>0}.$
		\end{itemize}
	\end{itemize}
\end{lem}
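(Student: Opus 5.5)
The plan is to use one structural fact throughout. On any vector killed by all even elements $t^{4k+2}$, the odd elements $t^{a}$ ($a$ odd) anticommute and square to zero. The reason is that for a vector $w$ with $t^{4k+2}w=0$ for all $k$ we have $t^at^bw+t^bt^aw=[t^a,t^b]w\in\sum_k\bbbc t^{4k+2}w=\{0\}$, and $[t^a,t^a]=2(t^a)^2$. In addition, the even elements $s^{\pm(4l+2)}$ act on the odd part $\cQ_1$ by explicit index shifts: $s^{4l+2}$ sends $t^{4k'-1}\mapsto t^{4(k'+l)+1}$ and kills $t^{4k'+1}$, and dually for $s^{-(4l+2)}$. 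All arguments are "replace the vector by an image" arguments or index-shift computations based on these facts and on the identities \eqref{M--20}.

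For (i), take any nonzero homogeneous $v\in W$. We kill the required operators one at a time. Suppose $v$ is killed by a set $S$ of odd operators $t^{a}$, and $Y=t^b$ is a new odd operator. If $Yv=0$ we keep $v$. Otherwise we replace $v$ by $Yv$, which is homogeneous and nonzero. Then $Y(Yv)=Y^2v=0$, and for $X\in S$ we get $X(Yv)=-Y(Xv)+[X,Y]v=0$, because $[X,Y]$ is some $t^{4k+2}$ and $v\in W$. When $b$ has the sign $r$ (that is, $b=r(2k+1)$ with $k\ge 0$), the new vector $Yv$ stays in $W$, so the procedure can be iterated. This directly gives $v'$: we adjoin the pair $t^{4r(i_2+1)\pm1}$ one operator at a time. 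To get $w$ with $t^{\pm1}w=0$, we first adjoin $t^{r}$ (which preserves $W$) and then adjoin $t^{-r}$ as the last step. The final vector need not lie in $W$, but the statement only asks for a nonzero homogeneous vector. The case of $v''$, extending the window to $j=i_1-1$, is the delicate point. For indices with $r(4r(i_1-1)\pm1)>0$ the same argument applies and the vector stays in $W$. When the new operators do not preserve $W$, I would first try to reduce to the already treated direction. One route is by symmetry: apply the positive-direction statement to the smaller window shifted down, i.e.\ to $[i_1-1,i_2-1]$. Another route is to use the $s$-operators as in (ii) to transport vanishing down by a step. I expect this bookkeeping, keeping the vector inside $W$ while extending to the non-preserved side, to be the main obstacle.

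For (ii)(1), we apply $s^{-r(4l+2)}$ to the relations $t^{4ri\pm1}u=0$ and use $s^{-r(4l+2)}u=0$. For the component of $t^{4ri\pm1}$ that $s^{-r(4l+2)}$ shifts, this gives $t^{4r(i-l)\mp'1}u=0$ (up to sign) for all $i\ge1$. This pushes the vanishing down by $l$ steps for one parity class of exponents. For the other parity class, we use identity \eqref{M--20}(i) with $\omega=u$. Its left-hand side and last term vanish because $s^{-r(4l+2)}u=0$ (after commuting appropriately). This expresses $t^{4p-\tau}u$ through terms $t^{4(p\pm rl+\cdots)+\tau}u$ of the opposite parity at indices already known to vanish. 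A downward induction on $p$ in steps bounded by $l+1$ then covers all integers.

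For (ii)(2), we use \eqref{M--20}(ii) with $\omega=u$. Since $s^{r(4l+2)}u=0$, only the $i=k$ term survives, so $(s^{r(4l+2)})^kt^{4p+\tau}u=t^{4(p+krl)+\tau+2rk}u$. Whenever $t^{4p+\tau}u=0$, this gives vanishing at an index shifted by $rk(4l+2)$, taking $k=1,2,\dots$. Starting from the window $1\le i\le N-1$, with $N-1\ge 2l+1$ consecutive values of both parities, these shifts, together with reindexing $\tau+2rk$ into the form $4ri'\pm1$, cover every residue class. An induction then shows $t^{4ri\pm1}u=0$ for all $i>0$. The condition $N>2l+1$ is exactly what makes the window long enough for the shifted copies to overlap with no gaps.
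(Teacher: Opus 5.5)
You have a genuine gap in part (i) and a broken argument for (ii)(1); (ii)(2) is fine apart from one omitted case.

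\textbf{Part (i).} Your basic step is sound. For $w\in W$ one has $t^at^bw=-t^bt^aw$ and $(t^a)^2w=0$. So replacing $v$ by $Yv$ (when $Yv\neq 0$) keeps the old vanishing and adds $Y$. This gives $w$, which indeed need not lie in $W$. It also gives $v'$ when $i_2\geq 0$ and $v''$ when $i_1\geq 2$, since then the adjoined operators $t^{4r(i_2+1)\pm1}$, resp.\ $t^{4r(i_1-1)\pm1}$, have exponent of sign $r$ and preserve $W$.

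Your claim that this ``directly gives $v'$'' is false for $i_2\leq -1$. For $v''$ you offer two routes, and neither works. Vanishing on the shifted window $[i_1-1,i_2-1]$ is not given. Nothing in (i) relates $W$ to $s^2\bbbc[s^{\pm4}]$, and the transport arguments of (ii) need $s^{\pm r(4l+2)}u=0$.

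In fact no bookkeeping can keep the vector in $W$ in these cases. Let $\Lambda$ be the exterior algebra on $\{\xi_a\}_{a\in 2\bbbz+1}$ with $\deg\xi_a=a$. Let $t^a$ act as $\xi_a\wedge$, $t^{4k+2}$ as $0$, and $s^c$ as the even derivation $\xi_a\mapsto\xi_{a+c}$; this is a $\bbbz$-graded $\cQ$-module. Take $r=1$ and $W:=\xi_1\xi_{-1}\Lambda^{+}$, where $\Lambda^{+}$ is generated by the $\xi_b$ with $b>0$. Then $W$ satisfies the hypotheses, and $v=\xi_1\xi_{-1}$ is killed by $t^{\pm1}$ (window $[0,0]$). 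But $\xi_{-3}\wedge$ is injective on $W$, so no $v''$ exists. Similarly, $W=\xi_{-5}\xi_{-3}\Lambda^{+}$ with window $[-1,-1]$ defeats $v'$, because $\xi_{-1}\wedge$ is injective on it.

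What is true is the following, and it is all that Proposition~\ref{M--28} uses, where $W=V$. The $t^{4k+2}$ are central in $\cQ$, so they vanish on the submodule generated by $W$. Your replacement argument therefore runs there with no sign restriction and produces $v'$ and $v''$. These lie in $W$ whenever $W$ is stable under all odd $t^a$.

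\textbf{Part (ii).} Your description of the $s$-action is wrong. Since $\{s^{4k-2}\}_{k\in\bbbz}=\{s^{4k+2}\}_{k\in\bbbz}$, both bracket rules apply to every $s^c$, so $[s^c,t^a]=t^{a+c}$ for every odd $a$. This is exactly what (\ref{M--20})(ii) encodes, and you use that identity in (ii)(2), so your proposal is internally inconsistent.

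This breaks your plan for (ii)(1). With $\omega=u$ and $s^{-r(4l+2)}u=0$ one has $s^{-r(4l+2)}t^bu=t^{b-r(4l+2)}u$. Hence the four terms on the right of (\ref{M--20})(i) cancel in pairs, and the identity reads $0=0$. In particular its last term does not vanish in general, and this route reaches no ``other parity class.''

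The missing step is also sufficient: $t^{a-r(4l+2)}u=[s^{-r(4l+2)},t^a]u=s^{-r(4l+2)}t^au$ for every odd $a$. So vanishing propagates by $-r(4l+2)$ on all exponents, and iterating from the odd $a$ with $ra\geq 3$ covers every odd $a$.

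Your argument for (ii)(2) is fine in substance, via $t^{a+kr(4l+2)}u=(s^{r(4l+2)})^kt^au$. You should treat $p=-r$ explicitly. There, the window of $2N-2$ consecutive odd exponents on the opposite side, translated by multiples of $r(4l+2)$, still covers all odd $a$ with $ra\geq 3$, because $4l+2\leq 4N-4$. The hypothesis $N>2l+1$ more than guarantees this, so it is not ``exactly'' what is needed.
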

\begin{Pro}\label{M--28}
	Suppose $V$ is a nonzero bounded finite weight $\bbbz$-graded $\cQ$-module. If  $\displaystyle{\sum_{k\in\bbbz}}\bbbc t^{4k+2}V=\{0\}$, then, there is a nonzero homogeneous  vector $u\in V$ such that $t^{4k\pm 1}u=0$ for all  integers $k$.
\end{Pro}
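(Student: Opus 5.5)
Under the hypothesis $\sum_{k\in\bbbz}\bbbc t^{4k+2}V=\{0\}$, the odd part acts on $V$ supercommutatively: $[t^{4k\pm1},t^{4k'\pm1}]$ is $0$ or $\pm t^{4m+2}$, and these kill $V$. So the operators $t^{4k+1},t^{4k-1}$ anticommute on $V$ and square to zero. The plan is a ``zig-zag'' through Lemma~\ref{M--22}. First build vectors killed by longer and longer windows $\{t^{4j\pm1}\mid i_1\le j\le i_2\}$. Then use boundedness and the even elements $s^{\pm(4l+2)}$ to pass from a long window to all of $\bbbz$.

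\textbf{Step 1.} Apply Lemma~\ref{M--22}(i) with $W=V$ (any $r$). The condition $\sum_{k\ge0}\bbbc t^{r(2k+1)}V\sub V$ is trivial, and $t^{4k+2}V=0$ is the hypothesis. This gives a nonzero homogeneous $w$ with $t^{\pm1}w=0$. Iterating the ``moreover'' part of Lemma~\ref{M--22}(i) in both directions, we get for every $N$ a nonzero homogeneous $v_N$ with $t^{4j\pm1}v_N=0$ for all $|j|\le N$. Let
\[W_N:=\{v\in V\mid t^{4j\pm1}v=0 \text{ for } |j|\le N\}.\]
This is a $\bbbz$-graded subspace, nonzero for every $N$, and the $W_N$ decrease. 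We want a nonzero vector in $\bigcap_N W_N$.

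\textbf{Step 2: boundedness.} The subspaces $W_N$ are graded and decreasing, but their supports could drift. Boundedness gives $\dim V^n\le d$ for all $n$. We can also shift degrees: $s^{\pm(4l+2)}$ has degree $\pm(4l+2)$ and commutes with $t^{4k+2}$. Using identity \eqref{M--20}(ii), $s^{r(4l+2)}$ conjugates $t^{4p+\tau}$ into combinations of $t^{4(p+irl)+\tau+2ri}$. So $s$-powers applied to a vector in a window produce vectors in shifted windows, with a bounded loss of window length. The natural case split is on the action of the $s$'s.

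\begin{itemize}
\item \textbf{Some $s^{r(4l+2)}$ kills a vector $u$ in a long window.} Take $u\in W_N$ with $N>2l+1$. Lemma~\ref{M--22}(ii)(2) gives $t^{4ri\pm1}u=0$ for all $i>0$. We then need the other half-line. We get it via (ii)(1) if also $s^{-r(4l+2)}u=0$; otherwise we pass to $u':=s^{-r(4l+2)}u$ and check the relations with \eqref{M--20}(i).
\item \textbf{All $s^{\pm(4l+2)}$ act injectively on the relevant graded pieces.} Then the degree pieces of $W_N$ in a fixed residue class mod $4l+2$ are all comparable in dimension (at most $d$). A decreasing chain of subspaces of bounded dimension stabilizes. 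We transport everything into a single fixed homogeneous space $V^{n_0}$ of dimension $\le d$, and $W_N\cap V^{n_0}$ stabilizes at a nonzero space. Any nonzero vector there lies in every $W_N$.
\end{itemize}

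\textbf{Main obstacle.} The delicate point is the transport in Step 2. Applying $s^{r(4l+2)}$ to a window vector must land in a window of comparable length, and the resulting vectors must be nonzero. This is exactly what \eqref{M--20}(i),(ii) and Lemma~\ref{M--22}(ii) control. I would first try to show that $s^{4l+2}s^{-4l-2}$ preserves $W_N$ up to shrinking $N$ by $l+1$, using \eqref{M--20}(i), and is injective or nilpotent on each finite-dimensional $V^n$. Fitting's decomposition then splits into the two cases above.
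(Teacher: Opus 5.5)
There is a genuine gap in your proposal. Your Step~1 is correct and matches the paper's opening move: the paper fixes a bound $d$ and takes $v\in V^p$ with $t^{4i\pm1}v=0$ for $|i|\le 2d(2d+1)$. The trouble is Step~2, which carries the whole difficulty. Neither of your two cases is actually carried out, and both fail as sketched.

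\textbf{Case A.} After Lemma~\ref{M--22}(ii)(2) gives one half-line, passing to $u'=s^{-r(4l+2)}u$ gains nothing. Since the even part is abelian, $u'$ is again killed by $s^{r(4l+2)}$, and also by $t^{4ri\pm1}$ for $i>0$. But nothing forces any $s^{-r(4l'+2)}$ to kill it. Iterating gives the vectors $(s^{-r(4l+2)})^ju$, which lie in pairwise distinct degrees. So boundedness does not make the process terminate, while the negative window shrinks by $l+1$ at each step.

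\textbf{Case B.} The chain $W_N\cap V^{n_0}$ does stabilize, but possibly at $\{0\}$. An element $s^{\pm(4l+2)}$ maps $W_N$ only into $W_{N-l-1}$. So moving $v_N$ into a fixed degree costs window length proportional to $|\deg v_N-n_0|$, and nothing controls $\deg v_N$ as $N$ grows. For the same reason, injectivity only compares graded pieces of $W_N$ with those of $W_{N-l-1}$, not pieces of a single $W_N$. Ruling out this drift is essentially the content of the proposition. Also, Fitting's lemma gives $V^n=\ker T^d\oplus{\rm im}\,T^d$, not a dichotomy ``injective or nilpotent''.

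\textbf{The missing idea.} Boundedness should be used not through a limit in $N$. Instead, place $d+1$ explicitly chosen vectors in one homogeneous space, and hit the resulting linear dependence with a well-chosen odd element. The paper does this twice.

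\emph{First half-line.} If $v$ is not already a solution, let $w:=t^{4rN+\sg}v\neq0$ be the first failure, so $N>2d(2d+1)$. Consider $v$ and the vectors $s^{r(4i+2)}s^{-r(4i+2)}v$, $1\le i\le d$, all in $V^p$. Take the least $k$ such that $s^{r(4k+2)}s^{-r(4k+2)}v$ lies in the span of $v$ and the $s^{r(4i+2)}s^{-r(4i+2)}v$ with $i<k$. By \eqref{M--20}(i), $t^{4r(N-k-\d_{\sg,-r})-\sg}$ kills those earlier vectors and sends $s^{r(4k+2)}s^{-r(4k+2)}v$ to $-s^{-r(4k+2)}w$. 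Hence $s^{-r(4k+2)}w=0$, and Lemma~\ref{M--22}(ii)(2) gives $t^{-4ri\pm1}w=0$ for $i>0$.

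\emph{Other half-line.} Let $u:=t^{4rN'+\tau}w\neq0$ be the next first failure. Because $s^{-r(4k+2)}w=0$, \eqref{M--20}(ii) makes $(s^{-r(4k+2)})^{2\ell}$ act on $t^{4r(N'+(2k+1)\ell)+\tau}w$ as a pure index shift down to $u$. The vectors $s^{r(4(2k+1)(d-j)+2)}t^{4r(N'+(2k+1)j)+\tau}w$, $0\le j\le d$, lie in a single homogeneous space. A dependence among them yields $s^{r(4\ell'+2)}u=0$ with $\ell'=(2k+1)(d-\ell)$, while $s^{-r(4k+2)}u=0$ still holds.

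\emph{Conclusion.} Only once $u$ is killed by $s$-elements of both signs do Lemma~\ref{M--22}(ii)(2) and then (ii)(1) give $t^{4ri\pm1}u=0$ for all $i$. The window length $2d(2d+1)$ is chosen precisely so that $N>2k+1$ and $N'>2\ell'+1$, and so that all shifted indices stay inside the window.
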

\pf  Assume $d$ is a bound for the dimension of $\bbbz$-graded homogeneous  spaces. Using Lemma~\ref{M--22}(i), there is a nonero homogenous vector  $v\in V$, say e.g.,  of degree $p,$  such that  
$t^{4i\pm1}v=0$ for $-2d(2d+1)\leq  i\leq 2d(2d+1).$ 
If  $t^{4i\pm1}v=0$ for all $i\in\bbbz,$ we are done, otherwise there is $r=\pm1,$ $\sg=\pm1$ and a positive integer $N>2d(2d+1)$  such that 
\begin{equation}\label{w--MM}
	0\neq  w:=t^{4rN+\sg}v\in V^{p+4rN+\sg}  \andd  t^{4ri\pm1}v=0\quad (1\leq i\leq N-1).
\end{equation} 
Since $\dim(V^p)\leq d,$ there is 
$1\leq k\leq d$ such that $s^{r(4k+2)}s^{-r(4k+2)}v\in\sspan_\bbbc\{v, s^{r(4i+2)}s^{-r(4i+2)}v\mid 1\leq i\leq k-1\}.$ But  (\ref{M--20})(i) together with (\ref{w--MM})  implies that
\begin{align*}
	-s^{-r(4k+2)}w=-s^{-r(4k+2)}t^{4rN+\sg}v
	=t^{4r(N-k-\d_{\sg,-r})-\sg}(s^{r(4k+2)}(s^{-r(4k+2)}v))
\end{align*}
and \small\begin{align*}
	&t^{4r(N-k-\d_{\sg,-r})-\sg}v=0\quad\hbox{as well as}\quad t^{4r(N-k-\d_{\sg,-r})-\sg}s^{r(4i+2)}s^{-r(4i+2)}v=0 \quad(1\leq i\leq k-1).
	%\\
	%=&-s^{r(4i+2)}t^{4r(N-k+i+\d_{\sg,-1})+\sg}v+t^{4r(N-k)-\sg}v+s^{r(4i+2)}t^{4r(N-k-i-\d_{\sg,1})+\sg}v+s^{r(4i+2)}s^{-r(4i+2)}t^{4r(N-k)-\sg}v
	%=0.
\end{align*} So  
{\small \begin{equation*}\label{M--25}s^{-r(4k+2)}w=0\andd \left\{\begin{array}{l}
			t^{4ri+ \sg}w=t^{4ri+ \sg}t^{4rN+\sg}v=\sg t^{4r(N+i)+2\sg}v-t^{4rN+\sg}t^{4ri+ \sg}v=0,\\
			t^{4ri- \sg}w=t^{4ri-\sg}t^{4rN+\sg}v=t^{4rN+\sg}t^{4ri-\sg}v=0\quad(1\leq i\leq N-1).
		\end{array}\right.
\end{equation*}}
This together with  Lemma~\ref{M--22}(ii)(2) implies that  $t^{-4ri\pm 1}w=0$ for all $i\in\bbbz^{>0}.$
Now if $t^{4ri\pm 1}w=0$ for all $i\in\bbbz^{\geq 0}$  as well, we get the result; otherwise, we 
assume $N'\geq N$ is the  smallest positive  integer such that for some $\tau=\pm1,$
\[s^{-r(4k+2)}w=0,~~t^{4rN'+\tau}w\neq 0\andd t^{4ri\pm 1}w=0\quad(1\leq i\leq N'-1).\]  Since $d$ is a bound for the dimension of weight spaces, we have  $\dim(V^{p+4r(2k+1)d+4rN+4rN'+\sg+\tau+2r})\leq d.$ So, there is $1\leq \ell\leq d$ such that 
\[s^{r(4(2k+1)(d-\ell)+2)}t^{4r(N'+(2k+1)\ell)+\tau}w\in\sum_{j=0}^{\ell-1}\bbbc s^{r(4(2k+1)(d-j)+2)}t^{4r(N'+(2k+1)j)+\tau}w.\]
We make a convention that if $\ell-1<0$, the above summation is zero. This together with (\ref{M--20})(ii) implies that 
\begin{align*}
	s^{r(4(2k+1)(d-\ell)+2)}t^{4rN'+\tau}w=&s^{r(4(2k+1)(d-\ell)+2)}(s^{-r(4k+2)})^{2\ell}t^{4r(N'+(2k+1)\ell)+\tau}w\\
	=&(s^{-r(4k+2)})^{2\ell}s^{(r(4(2k+1)(d-\ell)+2)}t^{4r(N'+(2k+1)\ell)+\tau}w\\
	\in&\sum_{j=0}^{\ell-1}\bbbc (s^{-r(4k+2)})^{2\ell}s^{r(4(2k+1)(d-j)+2)}t^{4r(N'+(2k+1)j)+\tau}w\\
	=&\sum_{j=0}^{\ell-1}\bbbc s^{r(4(2k+1)(d-j)+2)}(s^{-r(4k+2)})^{2\ell}t^{4r(N'+(2k+1)j)+\tau}w=0.
\end{align*}
Set $$\ell':=(2k+1)(d-\ell)\andd u:=t^{4rN'+\tau}w\neq 0,$$ we have 
\[s^{r(4\ell'+2)}u=0,~~s^{-r(4k+2)}u=0\andd t^{4ri\pm 1}u=0\quad(1\leq i\leq N'-1).\]
By Lemma~\ref{M--22}(ii), we get that $t^{4ri\pm 1}u=0$ for all integers $i$, as we desired\footnote{We mention that the proof of this proposition has been motivated by \cite{CFR}}.
\qed

\subsection{$\bbbz$-graded simple modules over split extensions of $\cQ$}
Assume $A$ is a $\bbbz$-graded abelian Lie algebra and set 
\[\fg:=A\op \cQ\]
which is a Lie superalgebra with 
\[\fg_0=A\op \cQ_0\andd\fg_1=\cQ_1.\]
Our main goal is
to classify bounded  finite weight $\bbbz$-graded simple $\fg$-modules.
As we will see, to this end, we need first to classify finite dimensional simple $\fg$-modules.  Therefore, we first classify finite dimensional simple $\fg$-modules.
We have 
\begin{equation}\label{grading}\fg=\Bigop{k\in\bbbz}{}\fg^k \hbox{ with } 
	\fg^{k}=\left\{\begin{array}{ll}
		A^{k}\op\bbbc s^{k}\op \bbbc t^{k}& k\in 4\bbbz+2,\\
		A^{k}\op\bbbc t^{k} & k\in4\bbbz\pm1,\\
		A^{k} & k\in4\bbbz.
	\end{array}
	\right.
\end{equation}
Set 
\begin{equation}\label{fk}\fk:=A\op t^2\bbbc[t^{\pm4}]\op t\bbbc[t^{\pm4}]\op t^{-1}\bbbc[t^{\pm4}].
\end{equation}
Recall that $\fg_1=\fk_1$ and for   $\lam\in \fk_0^*,$ define
\begin{align}
	f_\lam:&\fk_1\times \fk_1\longrightarrow \bbbc\label{radflam}\\
	&(x,y)\mapsto \lam([x,y]) \quad(x,y\in\fk_1).\nonumber
\end{align}

We say $\lam$ is coradical-finite if $\fk_1/{\rm rad}(f_\lam)$ is finite dimensional.  We denote the  Clifford superalgebra  generated by odd elements $\bar x\in \overline{\fk_1}$ subject to the relations  $\bar x\bar y+\bar y\bar x=\bar f_\lam(\bar x,\bar y)$ for $x,y\in \fk_1$ by $\frak{C}_\lam$.

\medskip

\begin{lem}\label{degenerate}
	Suppose $\sg=\pm1$ and  $ \lam\in\fk_0^*$. If the restriction of $\lam$ to $t^2\bbbc[t^{\pm4}]$ is nonzero while $\lam(t^{r})=0$ for all $r\in (4\bbbz+2)\cap \sg\bbbz^{>0}$, then, $f_\lam$ is nondegenerate.
\end{lem}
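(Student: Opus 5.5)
The plan is to compute $f_\lam$ explicitly in the basis $\{t^{4k+1},t^{4k-1}\mid k\in\bbbz\}$ of $\fk_1$ and to show, by a leading-term argument, that no nonzero element lies in ${\rm rad}(f_\lam)$. The key input is that the hypothesis forces the values of $\lam$ on $t^2\bbbc[t^{\pm4}]$ to vanish on a half-line of indices. This gives a triangularity argument of Hankel-matrix type.

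\emph{Block decomposition.} By the bracket of $\cQ$ we have $[t^{4k+1},t^{4k'-1}]=0$. Hence $\fk_1=t\bbbc[t^{\pm4}]\op t^{-1}\bbbc[t^{\pm4}]$ is an $f_\lam$-orthogonal decomposition, and ${\rm rad}(f_\lam)$ is the direct sum of the radicals of the two restrictions. Put $a_m:=\lam(t^{4m+2})$ for $m\in\bbbz$. Then
\[f_\lam(t^{4k+1},t^{4k'+1})=a_{k+k'}\andd f_\lam(t^{4k-1},t^{4k'-1})=-\lam(t^{4(k+k')-2})=-a_{k+k'-1}.\]
So both restrictions are given by Hankel matrices built from the single sequence $(a_m)$, the second one shifted and negated.

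\emph{Support of $(a_m)$.} Take $\sg=1$. The hypothesis says $\lam(t^r)=0$ for positive $r\in4\bbbz+2$, i.e.\ $a_m=0$ for all $m\geq0$. Since $\lam$ is nonzero on $t^2\bbbc[t^{\pm4}]$, some $a_m\neq0$. Therefore $M:=\max\{m\mid a_m\neq0\}$ exists.

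\emph{Leading-term argument.} Let $x=\sum_k c_kt^{4k+1}$ be a nonzero finite sum in the radical, and let $K:=\min\{k\mid c_k\neq0\}$. Pair $x$ with $t^{4(M-K)+1}$:
\[0=f_\lam\bigl(x,t^{4(M-K)+1}\bigr)=\sum_k c_k\,a_{k+M-K}=c_Ka_M,\]
because every $k>K$ gives an index exceeding $M$. This contradicts $c_Ka_M\neq0$. The same argument with the shifted sequence $a_{m-1}$ disposes of the $t^{-1}\bbbc[t^{\pm4}]$ part. The case $\sg=-1$ is symmetric: now $a_m=0$ for $m\leq-1$, so one uses $M':=\min\{m\mid a_m\neq0\}$ and the maximal index $K'$ in the support of $x$, pairing $x$ with $t^{4(M'-K')+1}$.

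The only real point is noticing that the one-sided vanishing of $(a_m)$ makes the Hankel form triangular. I do not expect a genuine obstacle beyond checking the index bookkeeping, in particular that the shift by $-1$ in the second block and the sign convention for $\sg$ match the hypothesis $r\in(4\bbbz+2)\cap\sg\bbbz^{>0}$.
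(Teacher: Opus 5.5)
Your proposal is correct and is essentially the paper's argument. Both proofs pair a putative radical element with the single basis vector that matches its extreme-index term against the extreme nonvanishing value of $\lam$ on $t^{4\bbbz+2}$, so the one-sided vanishing hypothesis kills every other term. The differences are cosmetic: you first split $\fk_1$ into its two $f_\lam$-orthogonal blocks and work out $\sg=1$, while the paper treats a mixed element directly in the case $\sg=-1$ and lets $[t^{4k+1},t^{4k'-1}]=0$ discard the other block.
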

\pf Without loss of generality, we assume $\sg=-1.$ Suppose  $k$ is  the smallest nonnegative integer  with  $\lam(t^{4k+2})\not =0$.
Assume $x$ is a  nonzero element of $\fk_1.$ We shall show $x$ is not in the radical of $f_\lam.$ We know that  $x$ is of the form $\sum_{i=1}^pa_it^{4m_i+j_i}$ in which  $a_1,\ldots,a_p$ are nonzero scalars,  $m_1\leq \cdots\leq m_p,$ $j_1,\ldots,j_p=1,-1$ and $m_i\neq m_{i'}$ if $i\neq i'$ and $j_i=j_{i'}.$
We have 
\begin{align*}
	f_\lam(t^{4(k-m_p+\d_{-1,j_p})+j_p},x )=&f_\lam(t^{4(k-m_p+\d_{-1,j_p})+j_p},\sum_{i=1}^pa_it^{4m_i+j_i} )=\lam([t^{4(k-m_p+\d_{-1,j_p})+j_p},\sum_{i=1}^pa_it^{4m_i+j_i} ])\\
	=&\sum_{i=1}^pa_i\lam([t^{4(k-m_p+\d_{-1,j_p})+j_p},t^{4m_i+j_i} ])
	=\sum_{i:j_i=j_p}a_ij_p\lam(t^{4(k-m_p+m_i+\d_{-1,j_p})+2j_p} )\\
	=&\sum_{i:j_i=j_p}a_ij_p\lam(t^{4(k-m_p+m_i)+2} )
	=a_pj_p\lam(t^{4k+2})+\sum_{p\neq i:j_i=j_p}a_ij_p\lam(t^{4(k-m_p+m_i)+2} )\\
	=&a_pj_p\lam(t^{4k+2})\neq 0,
\end{align*}
The last equality is due to the choice of $k$ and the fact that  $-m_p+m_i<0$ for all $1\leq i<p$.\qed

\begin{lem}[Finite dimensional simple $\fk$-modules] \label{lemfk} Suppose   $\lam\in\fk_0^*$ and recall (\ref{fk}) as well as  (\ref{radflam}). 
	\begin{itemize}
		\item[(a)] $\fk$-modules killed by ${\rm rad}(f_\lam)\cup \{x-\lam(x)\mid x\in \fk_0\}$ are in one to one correspondence with $\frak{C}_\lam$-modules.
		\item[(b)] Suppose that $U$ is a  nonzero $\fk$-module killed by $\{x-\lam(x)\mid x\in \fk_0\}$. 
		If $U$ is  finite dimensional, then, $\lam$ is coradical-finite and if $U$ is finite dimensional and simple, then, ${\rm rad}(f_\lam)U=\{0\}.$ 
		\item[(c)]  Up to isomorphism, there is unique  finite dimensional  simple $\fk$-module killed by  $\{x-\lam(x)\mid x\in \fk_0\};$  this unique module is identified with the unique  finite dimensional simple module over the Clifford superalgebra $\frak{C}_\lam$; we denote this unique module by $V_{_{c,\lam}}.$
	\end{itemize}
\end{lem}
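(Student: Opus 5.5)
The starting point is that $\fk_0=A\op t^2\bbbc[t^{\pm4}]$ is central in $\fk$. Indeed, $A$ is abelian and commutes with $\cQ$ in $\fg=A\op\cQ$, and $[t^2\bbbc[t^{\pm4}],\cQ_1]=\{0\}$. Moreover, $[\fk_1,\fk_1]\sub t^2\bbbc[t^{\pm4}]\sub\fk_0$. Hence, on a $\fk$-module $U$ killed by $\{x-\lam(x)\mid x\in\fk_0\}$, we have for $x,y\in\fk_1$
\[xy+yx=[x,y]=\lam([x,y])=f_\lam(x,y)\quad\hbox{on }U.\]
So a $\fk$-module on which $\fk_0$ acts by $\lam$ is the same thing as a module over the (possibly infinite dimensional, possibly degenerate) Clifford superalgebra $\mathfrak{C}(\fk_1,f_\lam)$, with $\fk_1$ acting by odd operators. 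If, in addition, ${\rm rad}(f_\lam)$ acts trivially, then the action factors through $\fk_1/{\rm rad}(f_\lam)=\overline{\fk_1}$. The relations become $\bar x\bar y+\bar y\bar x=\bar f_\lam(\bar x,\bar y)$, which is exactly a $\mathfrak{C}_\lam$-module. Conversely, a $\mathfrak{C}_\lam$-module becomes a $\fk$-module by letting $x\in\fk_1$ act as $\bar x$ and $x\in\fk_0$ act as $\lam(x)$. The bracket relations in $\fk$ are then exactly the defining relations, so these two constructions are mutually inverse. This proves (a).

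For (b), let $U$ be finite dimensional. Consider the linear map $\rho:\fk_1\to\End(U)$ and set $K:=\ker\rho$, which has finite codimension. For $x\in K$ and any $y\in\fk_1$, we get $f_\lam(x,y)\,\id_U=xy+yx=0$ on $U$. Since $U\neq0$, this gives $f_\lam(x,y)=0$, so $K\sub{\rm rad}(f_\lam)$. Therefore $\fk_1/{\rm rad}(f_\lam)$ is a quotient of the finite dimensional space $\fk_1/K$, and $\lam$ is coradical-finite.

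Now assume also that $U$ is simple. For $z\in{\rm rad}(f_\lam)$ we have $z^2=\frac12 f_\lam(z,z)=0$, and $z$ anticommutes with every $y\in\fk_1$ on $U$ while commuting with $\fk_0$, which acts by scalars. Hence $zU$ is a $\fk$-submodule: it is stable under $\fk_0$, and $y(zu)=-z(yu)\in zU$. It is also a graded subspace, since $z$ is odd. Because $z^2=0$, $zU$ cannot equal $U$ unless $U=0$: otherwise $U=zU=z^2U=0$. By simplicity, $zU=\{0\}$, so ${\rm rad}(f_\lam)U=\{0\}$. This subspace-stability step is the only place where care with the super-signs is needed.

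For (c), combine (a) and (b). A finite dimensional simple $\fk$-module killed by $\{x-\lam(x)\}$ is killed by ${\rm rad}(f_\lam)$, so by (a) it is a simple $\mathfrak{C}_\lam$-module, and $\lam$ is coradical-finite. In that case $\overline{\fk_1}$ is finite dimensional with the nondegenerate form $\bar f_\lam$, so $\mathfrak{C}_\lam$ is a finite dimensional Clifford superalgebra on a nondegenerate quadratic space. It is therefore a simple associative superalgebra, isomorphic to $M(r|r)$ when $\dim\overline{\fk_1}$ is even and to $Q(r)$ when it is odd. By \S\ref{clifford}, it has a unique simple module up to isomorphism; here isomorphism is understood up to parity change in the $M(r|r)$ case. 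Transporting this module back through (a) gives existence and uniqueness of $V_{_{c,\lam}}$.

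The main obstacle is justifying that $\mathfrak{C}_\lam$ is simple as a superalgebra. For this I would invoke the standard structure theorem for Clifford superalgebras of nondegenerate forms over $\bbbc$. One pairs basis vectors into hyperbolic pairs, each generating a copy of $M(1|1)$, takes super tensor products, and, when the dimension is odd, is left with one extra generator that produces the $Q$-type factor.
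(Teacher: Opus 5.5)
Your proposal is correct and follows the paper's proof essentially step by step. Part (a) is the same identification of $U(\fk)/I_\lam$-modules with $\frak{C}_\lam$-modules. The first half of (b) is the paper's observation that the kernel of the representation on $\fk_1$ lies in ${\rm rad}(f_\lam)$, which the paper phrases through an induced form on the image. Part (c) is the same appeal to the structure of nondegenerate Clifford superalgebras, which you spell out in more detail. The only cosmetic difference is in the second half of (b): you show that the image $zU$ of the square-zero odd operator $z\in{\rm rad}(f_\lam)$ is a proper graded submodule, whereas the paper shows that its kernel $\{v\in U\mid zv=0\}$ is a nonzero submodule. Your parity-change caveat in (c) is a fair refinement of a point the paper leaves implicit.
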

\pf  
(a) The map $\varphi_\lam:\fk\longrightarrow \frak{C}_\lam$ with 
\begin{align*}
	x\mapsto\left\{
	\begin{array}{ll}
		\lam(x) & x\in \fk_0\\
		\bar x& x\in \fk_1
	\end{array}
	\right.
\end{align*}
satisfies the following for $x\in \fk_0$ and $y,z\in\fk_1:$
\begin{align*}
	\varphi_\lam[x,y]=0=\lam(x)\bar y-\bar y\lam(x)=\varphi_\lam(x)\varphi_\lam(y)-\varphi_\lam(y)\varphi_\lam(x)
\end{align*}
and 
\begin{align*}
	\varphi_\lam[y,z]=\lam([y,z])=f_\lam(\bar y,\bar z)=\bar y\bar z+\bar z\bar y=\varphi_\lam(y)\varphi_
	\lam(z)+\varphi_\lam(z)\varphi_\lam(y).
\end{align*}
So, we get an associative algebra epimorphism form $U(\fk)$ to $\frak{C}_\lam.$ This trivially induces an associative algebra epimorphism 
from  $U(\fk)/I_\lam$ to $\frak{C}_\lam$ 
%(in particular, $I_\lam$ is proper) 
in which $I_\lam$ is the ideal of $U(\fk)$ generated by $${\rm rad}(f_\lam)\cup \{x-\lam(x)\mid x\in \fk_0\}.$$
Also, the algebra homomorphism from the free unital associative algebra over $\fk_1/{\rm rad}(f_\lam)$ to $U(\fk)/I_\lam$ mapping $\bar x\in \bar \fk_1$ to $x+I_\lam$ induces an algebra homomorphism from $\frak{C}_\lam$ to $U(\fk)/I_\lam$.
This means that $\frak{C}_\lam$ is isomorphic to  $U(\fk)/I_\lam$ and so we are done.

(b) Assume $\Phi$ is the corresponding representation. Set  $K:={\rm im}(\Phi).$ We have $\Phi(\fk_i)=K_i$ for $i=0,1.$ If $x\in\fk_0$ and  $\Phi(x)=0,$ then, for each $v\in U$, we have  $\lam(x)v=x\cdot v=0$ and so $\lam(x)=0.$ Therefore, the functional $\lam$ induces a linear functional $\mu$ on $K_0$ with $\mu(\Phi(x)):=\lam(x).$   We also have 
\begin{align*}
	\mu([\Phi(x),\Phi(y)])=\mu(\Phi([x,y]))=\lam([x,y])=f_\lam(x,y)\quad(x,y\in \fk_1).
\end{align*}
Define  the  symmetric bilinear form $\fm_\mu$ on $K_1$  by \[(\Phi(x),\Phi(y))_\mu:=\mu([\Phi(x),\Phi(y)])\quad x,y\in \fk_1.\] We have 
\[f_\lam(x,y)=0\hbox{ if and only if } (\Phi(x),\Phi(y))_{\mu}=0.\] In particular, $x\in {\rm rad}(f_\lam)$ if and only if $\Phi(x)\in {\rm rad}\fm_{\mu};$ 
moreover, $\Phi$ induces a linear isomorphism from  $\fk_1/{\rm rad}(f_\lam)$  to $K_1/{\rm rad}\fm_{\mu}$ and so, if  $U$ is  finite dimensional, then,     $\lam$ is coradical-finite.

Next assume  $U$ is  finite dimensional  and simple. Let $x\in {\rm rad}(f_\lam)$ and  $u$ be an eigenvector -with corresponding eigenvalue $r$-  for the  $\fk$-module homomorphism 
\begin{align*}
	\Psi:&U\longrightarrow U\\
	&v\mapsto xv\quad(v\in U).
\end{align*}
We have    $xu=r u$ and so 
$$0=[x,x]u=2x(xu)=2r^2 u.$$
Therefore, $\{v\in U\mid xv=0\}$ is a nonzero   $\fk$-submodules of $U$ and so it is the entire $U$ as we desired.
%\[xyv=(-1)^{|y|}yxv+[x,y]v=0.\]
This completes the proof.

(c) follows from (a),(b) and general facts about module theory of Clifford superalgebras. \qed

\begin{Pro}[Finite dimensional simple $\fg$-modules]\label{fdfg}
	Assume $M$ is a finite dimensional simple $\fg$-module. Then, there is a coradical-finite linear functional $\lam\in\fk_0^*$ such that $xv=\lam(x)v$ for all $x\in \fk_0=A\op t^2\bbbc[t^{\pm4}]$ and $v\in M$.  Also, ${\rm rad}(f_\lam)M=\{0\}$ and $M$ 
	is a completely reducible $\fk$-module whose simple  constituents are isomorphic via even isomorphisms.
\end{Pro}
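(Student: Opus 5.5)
The strategy is to show that $\fk_0=A\op t^2\bbbc[t^{\pm4}]$ acts centrally on $M$, so that it acts by scalars, and then to reduce to Lemma~\ref{lemfk}. First I check centrality. By the defining brackets, $[\cQ_0,\cQ_0]=0$ and $[t^2\bbbc[t^{\pm4}],\cQ_1]=0$, and $A$ is an abelian ideal commuting with $\cQ$. Hence $\fk_0$ lies in the center of $\fg$. The elements of $\fk_0$ are even, so on the finite dimensional simple module $M$ each one acts by an even $\fg$-endomorphism. By the super version of Schur's lemma, an even endomorphism of a finite dimensional simple module has an eigenvector whose eigenspace is a nonzero submodule, so it acts as a scalar. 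This gives a functional $\lam\in\fk_0^*$ with $xv=\lam(x)v$ for all $x\in\fk_0$ and $v\in M$. Restricting to $\fk$, $M$ is a nonzero finite dimensional $\fk$-module killed by $\{x-\lam(x)\mid x\in\fk_0\}$, and Lemma~\ref{lemfk}(b) then gives that $\lam$ is coradical-finite.

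Next I show ${\rm rad}(f_\lam)M=\{0\}$. Simplicity of $M$ is over $\fg$, not over $\fk$, so the argument of Lemma~\ref{lemfk}(b) has to be redone with $\fg$. Set $R:={\rm rad}(f_\lam)\sub\fk_1=\fg_1$ and $M':=\{v\in M\mid Rv=0\}$.

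1. I show that $R$ is stable under ${\rm ad}(s^{4k+2})$. For $x\in R$ and $y\in\fk_1$, the super Jacobi identity gives $\lam([[s,x],y])=\lam([s,[x,y]])-\lam([x,[s,y]])$. The first term vanishes because $[x,y]\in\fk_0$ and $[s,\fk_0]=0$. The second vanishes because $[s,y]\in\fk_1$ and $x\in R$.
2. From this, $M'$ is a $\fg$-submodule. For $\fk_0$ this is clear. For $x\in R$ and $y\in\fk_1$, $x(yv)=-y(xv)+[x,y]v=\lam([x,y])v=0$. For $s$, $x(sv)=s(xv)-[s,x]v=0$ by step 1.
3. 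I show $M'\neq 0$. The odd elements of $R$ pairwise anticommute up to $\lam([x,y])=0$, so their images in $\End(M)$ span a finite dimensional space of anticommuting operators, each squaring to $\frac12\lam([x,x])=0$. Taking a basis $x_1,\dots,x_m$ of that image space, the product $x_1\cdots x_m$ of nilpotent anticommuting operators is either nonzero, in which case its image lies in $M'$, or we shorten the product. In either case a common kernel vector exists, so $M'\ne0$.

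By simplicity, $M'=M$.

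Now $M$ is a $\frak{C}_\lam$-module by Lemma~\ref{lemfk}(a). Since $\lam$ is coradical-finite, $\frak{C}_\lam$ is a finite dimensional Clifford superalgebra on a nondegenerate form, hence a simple superalgebra $M(r|r)$ or $Q(r)$. Finite dimensional modules over such a superalgebra are completely reducible with a unique simple module $V_{c,\lam}$ up to isomorphism (Lemma~\ref{lemfk}(c)). That isomorphism is, however, only up to parity shift. To get even isomorphisms, I use the fact that $s$-elements are even and commute with $\fk_0$. More robustly, I take the isotypic decomposition over $\fk$ and show that the parity-shifted isotypic component, if distinct, is $\fg$-stable. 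Since $s$ acts by even maps and normalizes $\fk$, the space $\operatorname{Hom}_\fk(V_{c,\lam},M)$, graded by parity, carries a $\fg$-action in which the even operators $s$ preserve each parity piece. Therefore the sum of components of one parity type is a $\fg$-submodule, so by simplicity only one type occurs. In the $Q(r)$ case $V\cong\Pi V$ evenly anyway.

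I expect step 3 and this last parity argument to be the main obstacles.
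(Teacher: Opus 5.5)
Your treatment of the first two assertions is correct. Centrality of $\fk_0$, Schur's lemma and Lemma~\ref{lemfk}(b) give $\lam$ as in the paper. For ${\rm rad}(f_\lam)M=\{0\}$ you argue globally: you prove $[\mathscr{S},{\rm rad}(f_\lam)]\sub{\rm rad}(f_\lam)$, which the paper only asserts, so the common kernel $M'$ of ${\rm rad}(f_\lam)$ is a nonzero $\fg$-submodule. The paper instead grows $M$ from a simple $\fk$-submodule $V(1)$ via $V(i)=V(i-1)+z_{i-1}V(i-1)$, checking at each stage that ${\rm rad}(f_\lam)$ kills $V(i)$. Both routes rest on the same key fact, and yours is tidier. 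Your step 3 can even be shortened, since any simple $\fk$-submodule of $M$ lies in $M'$ by Lemma~\ref{lemfk}(b). Complete reducibility then follows from semisimplicity of $\frak{C}_\lam$. Note that $\frak{C}_\lam=\bbbc$ when $f_\lam=0$, which your list $M(r|r)$, $Q(r)$ omits, but this is harmless.

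The gap is in the last step, which carries the real content of ``via even isomorphisms''. Knowing only that the $s$-elements are even and commute with $\fk_0$ is not enough. Your mechanism also fails as written. You say $\mathrm{Hom}_\fk(V_{c,\lam},M)$ carries an action of $s$ preserving parity pieces, but post-composition with $s$ does not preserve $\fk$-homomorphisms. This is because $s$ does not commute with $\fk_1$, e.g.\ $[s^{4k-2},t^{4k'+1}]=t^{4(k+k')-1}$. So $s\circ\phi$ is generally not a $\fk$-map, and no action has actually been defined.

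What you need is that each $z\in\mathscr{S}$ maps the even-isotypic component of $M$ into itself. The missing idea is as follows.

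\begin{itemize}
\item For a simple $\fk$-submodule $W\sub M$, the space $W+zW$ is a $\fk$-submodule, since $x(zw)=z(xw)+[x,z]w$ with $[x,z]\in\fk$.
\item The map $w\mapsto zw+W$ is a surjective \emph{even} $\fk$-homomorphism $W\to(W+zW)/W$, because $[x,z]w\in W$.
\item Hence $(W+zW)/W$ is $0$ or evenly isomorphic to $W$. By complete reducibility, $W+zW$ lies in the even-isotypic component of $W$.
\item That component is therefore $\fg$-stable, so it equals $M$ by simplicity.
\end{itemize}

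This is exactly what the paper's maps $v\mapsto\pi^r_j(z_rv)$ accomplish.

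Alternatively, you could rescue your Hom-space argument by using $\phi\mapsto s\circ\phi-\phi\circ\varphi_0(s)$. Here $\varphi_0(s)$ is an even operator on $V_{c,\lam}$ implementing ${\rm ad}\,s$. It exists because ${\rm ad}\,s$ is skew for $\bar f_\lam$, and $\mathfrak{so}(\overline{\fk_1})$ is realized by quadratic elements in the even part of $\frak{C}_\lam$. You did not supply this, however, so as it stands the parity claim is unproved.
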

\pf
The first assertion easily follows  as $M$ is simple as well as finite dimensional and $A\op t^2\bbbc[t^{\pm4}]$ lies in the center of $\fg.$
Set $\mathscr{S}:=s^2\bbbc[s^{\pm4}].$ We have
\begin{equation}\label{inv-rad-1}
	[\mathscr{S},{\rm rad}(f_\lam)]=[s^2\bbbc[s^{\pm4}],{\rm rad}(f_\lam)]\sub {\rm rad}(f_\lam).
\end{equation}
Since $M$ is finite dimensional, we pick a simple $\fk$-submodule $V(1)$ of $M$. 
Using Lemma~\ref{lemfk}, we have 
${\rm rad}(f_\lam)V(1)=\{0\}$ and that $\lam$ is coradical-finite. If $\mathscr{S} V(1)=s^2\bbbc[s^{\pm4}]V(1)\sub V(1),$ then,  $M=V(1)$ and we are done, otherwise, there is $z_1\in \mathscr{S}$ such that $z_1 V(1)\not\sub V(1).$ Set $V(2):=V(1)+z_1V(1).$ For $x\in \fk,$ we have 
\begin{align*}
	xV(2)=x(V(1)+z_1V(1))\sub xV(1)+z_1 xV(1)+[x,z_1]V(1).
\end{align*}
This implies that $V(2)$ is a $\fk$-module; also  contemplating Lemma~\ref{lemfk}(b) together with  (\ref{inv-rad-1}), we have $xV(2)=\{0\}$ if $x\in {\rm rad}(f_\lam)$. In particular, using Lemma~\ref{lemfk}(a), we get that  $V(2)$ is a finite dimensional $\frak{C}_\lam$-module
and so there is a simple $\frak{C}_\lam$-submodule $W(2,1)$ of $V(2)$, which is in fact a $\fk$-submodule of $V(2)$, such that \[V(2)=V(1)\op W(2,1).\] Assume $\pi^{^2}_{_1}:V(1)\op W(2,1)\longrightarrow W(2,1)$ is the canonical projection map. Then, we have 
	\begin{align*}
		\pi^{^2}_{_1}(z_1(xv))=\pi^{^2}_{_1}(x(z_1v))+\pi^{^2}_{_1}([z_1,x]v)=\pi^{^2}_{_1}(x(z_1v))=x\pi^{^2}_{_1}(z_1v)\quad(x\in\fk,v\in V(1));
	\end{align*}
	that is, 
	\begin{align*}
		f:&V(1)\longrightarrow W(2,1)\\
		&v\mapsto \pi^{^2}_{_1}(z_1v)\quad(v\in V(1))
	\end{align*}
	is a nonzero $\fk$-module homomorphism of degree zero and so an isomorphism of degree zero. 
Now, if $M=V(2)$,  we are done, otherwise, assume  we have $z_1,\ldots,z_{r-1}\in\mathscr{S}$ for some positive integer $2<r$ such that for $V(i)=V(i-1)+z_{i-1} V(i-1)$ ($2\leq i\leq r-1$), we have  ${\rm rad}(f_\lam) V(i)=\{0\}$ for all $1\leq i\leq r-1$, and that  there are simple $\fk$-submodules $W(i,j)$ ($1\leq j\leq n_i$) of $V(i)$  $(2\leq i\leq r-1)$ which are isomorphic to $V(1)$ via isomorphisms of degree zero and 
\[V(i)=V(i-1)\op \Bigop{j=1}{n_{i}}W(i,j)\quad (2\leq i\leq r-1).\] If $V(r-1)=M,$ we are done, otherwise, there is $z_{r}\in\mathscr{S}$ with $z_r V(r-1)\not\sub V(r-1).$ Set 
\[V(r):=V(r-1)+z_r V(r-1).\] As above, we have ${\rm rad}(f_\lam) V(r)=\{0\}$ and so it is completely reducible, in particular, there are $n_r\in\bbbz^{>0}$ and simple $\fk$-submodules of $W(r,j)$ $(1\leq j\leq n_r)$ such that $V(r)=V(r-1)\op\Bigop{j=1}{n_r}W(r,j).$
Let $\pi_{_j}^{^r}:V(r)\longrightarrow W(r,j)$ ($1\leq j\leq n_r$) be the canonical projection map. For $1\leq j\leq n_r,$ there are $1\leq i_0\leq r-1,1\leq j_0\leq n_{i_0}$ such that ${{\pi_{_j}^{^r}}}(z_r W(i_0,j_0))\neq \{0\},$ so 
\begin{align*}
	f:&W(i_0,j_0)\longrightarrow W(r,j)\\
	&v\mapsto {{\pi_{_j}^{^r}}}(z_rv)\quad(v\in W(i_0,j_0))
\end{align*}
is a nonzero $\fk$-module homomorphism of degree zero and so an isomorphism of degree zero. 
This together with an induction process completes the proof.\qed
\medskip

\medskip
\begin{Pro}\label{s-c-w}
	Suppose that 
	\begin{itemize}
		\item {{$\lam\in \fk_0^*$ is a coradical-finite linear functional}} with corresponding finite dimensional  simple $\fk$-module $V:=V_{c,\lam},$ 
		\item $\varrho$ is a linear functional  on $\mathscr{S}=s^2\bbbc[s^{\pm4}]$,
		\item $\varphi:\mathscr{S}\longrightarrow {\rm End}(V)$ is a linear map satisfying \[\varphi(z)(xv)-x\varphi(z)v=[x,z]v\andd \varphi(z)\varphi(z')v-\varphi(z')\varphi(z)v=-\varrho(z,z')v\] for $x,y\in\fk,$ $z,z'\in \mathscr{S}$ and  $v\in V$.
	\end{itemize}
	Set \(\Omega=\Omega_{\lam,\varrho,\varphi}(V):=V\)  and define 
	\begin{align*}
		\cdot:&\fg\times \Omega\longrightarrow \Omega\\
		(x, v)&\mapsto\left\{
		\begin{array}{ll} xv & x\in \fk,\\
			\varrho(x)v+\varphi(x)v & x\in \mathscr{S}.
		\end{array}
		\right.
	\end{align*}
	Then, $\Omega$ is a finite dimensional simple $\fg$-module.
\end{Pro}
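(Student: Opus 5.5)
The plan is to check directly that the prescribed action is a representation of $\fg$, and then to get simplicity for free from the simplicity of $V$ as a $\fk$-module. Finite dimensionality is immediate, since $\Omega=V=V_{c,\lam}$ is finite dimensional by Lemma~\ref{lemfk}(c). Here $\varphi(z)$ must be even for $z\in\mathscr{S}$, since $\mathscr{S}\sub\fg_0$. I will assume this is part of the meaning of ``$\varphi:\mathscr{S}\to\End(V)$'', so that the action map is $\bbbz_2$-graded.

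Since $\fg=\fk\op\mathscr{S}$ as vector spaces, the map $\rho:\fg\to\End(\Omega)$ given by $\rho|_{\fk}=$ the $\fk$-action and $\rho(z)=\varrho(z)\id+\varphi(z)$ for $z\in\mathscr{S}$ is linear. I will verify $\rho([a,b])=[\rho(a),\rho(b)]$ (super-commutator) in three cases.

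\emph{Case $a,b\in\fk$.} This holds because $V$ is a $\fk$-module.

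\emph{Case $z\in\mathscr{S}$, $x\in\fk$.} The scalar part $\varrho(z)\id$ commutes with $\rho(x)$, so $[\rho(z),\rho(x)]=[\varphi(z),\rho(x)]$. The first hypothesis on $\varphi$ states precisely that $\varphi(z)x-x\varphi(z)$ equals the action of the bracket of $z$ and $x$. Since $z$ is even there is no super-sign, and I will match the sign to the bracket convention of $\cQ$, i.e.\ to $[s^{4k\mp2},t^{4k'\pm1}]$. For $x\in\fk_0$ both sides vanish: $x$ acts by the scalar $\lam(x)$, and $[\mathscr{S},\fk_0]=0$ because $[\cQ_0,\cQ_0]=0$ and $A$ is central. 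For $x\in\fk_1$ the bracket $[z,x]$ lies in $\fk_1$, so its action on $V$ is defined and the hypothesis gives exactly the required identity.

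\emph{Case $z,z'\in\mathscr{S}$.} Since $[z,z']=0$ in $\fg$, we need $[\rho(z),\rho(z')]=[\varphi(z),\varphi(z')]=0$. This is where I expect the only real subtlety: the second hypothesis, $[\varphi(z),\varphi(z')]=-\varrho(z,z')\id$, must be read so that the scalar vanishes. For example, $\varrho(z,z')$ could denote a form that is zero on the abelian algebra $\mathscr{S}$, or the scalar could be absorbed by the $\varrho$-shift. I would first try to show directly that this scalar must be zero. Take the super-trace of $[\varphi(z),\varphi(z')]$ on $V$: it vanishes because both operators are even. The super-trace of $\id$ is $\dim V_0-\dim V_1$. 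So whenever $\dim V_0\neq\dim V_1$ the scalar is forced to be zero. In the remaining balanced case I would simply use the condition as intended, namely that the operators $\varrho(z)\id+\varphi(z)$ commute. With these three cases, $\Omega$ is a $\fg$-module.

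For simplicity, let $W\sub\Omega$ be a nonzero $\fg$-submodule. Restricting the action to $\fk$ makes $W$ a $\fk$-submodule of $V=V_{c,\lam}$, which is a simple $\fk$-module by Lemma~\ref{lemfk}(c). Hence $W=\Omega$, and $\Omega$ is a finite dimensional simple $\fg$-module.
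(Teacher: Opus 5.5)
Your overall plan (check the bracket relations case by case, then get simplicity by restricting to $\fk$) is the direct verification the paper has in mind when it calls the proof trivial. Your treatment of the cases $a,b\in\fk$ and $z\in\mathscr{S}$, $x\in\fk$, and your simplicity argument, are fine. The gap is in the case $z,z'\in\mathscr{S}$. Your super-trace argument forces the scalar $c$ in $[\varphi(z),\varphi(z')]=c\,\id_V$ to vanish only when $\dim V_0\neq\dim V_1$. For $V=V_{c,\lam}$ this essentially never happens:
\begin{itemize}
\item If $\overline{\fk_1}\neq\{0\}$, the nondegenerate symmetric form $\bar f_\lam$ has a vector $\bar x$ with $\bar f_\lam(\bar x,\bar x)\neq 0$. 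Then $\bar x^2=\frac{1}{2}\bar f_\lam(\bar x,\bar x)$ is a nonzero scalar, so $\bar x$ is an odd bijection $V_0\to V_1$ and $\dim V_0=\dim V_1$.
\item If $\overline{\fk_1}=\{0\}$, then $V$ is one-dimensional and there is nothing to prove.
\end{itemize}
So the ``balanced case'' you set aside is the only nontrivial case, and there you simply assume the conclusion. The fix is to use the ordinary trace instead of the super-trace. The operator $\varphi(z)\varphi(z')-\varphi(z')\varphi(z)$ is an ordinary commutator of endomorphisms of the finite-dimensional space $V$, so its trace is $0$. On the other hand ${\rm tr}(c\,\id_V)=c\dim V$ with $\dim V\geq 1$, hence $c=0$. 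This is exactly the argument the paper uses in the proof of the converse proposition that immediately follows.

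A smaller point concerns the sign in the first hypothesis. As printed, $\varphi(z)(xv)-x\varphi(z)v=[x,z]v$ gives $[\rho(z),\rho(x)]=\rho([x,z])=-\rho([z,x])$ for $x\in\fk_1$, which is the wrong sign. The intended condition is $\varphi(z)(xv)=x\varphi(z)v+[z,x]v$, which is what the paper itself derives in the converse. Appealing to ``the bracket convention of $\cQ$'' does not settle this. You should say explicitly that you are correcting the sign, or equivalently let $z$ act by $\varrho(z)\id-\varphi(z)$, which works with the hypothesis as printed.
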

\pf It is trivial. \qed

\begin{Pro} 
	Any finite dimensional simple $\fg$-module is of the form $\Omega_{\lam,\varrho,\varphi}(V)$  as stated in Proposition~\ref{s-c-w}.
\end{Pro}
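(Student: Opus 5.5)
Let $M$ be a finite dimensional simple $\fg$-module. The plan is to show first that $M$ is simple as a $\fk$-module, and then to read off $\varrho$ and $\varphi$ from the action of $\mathscr{S}$. By Proposition~\ref{fdfg}, there is a coradical-finite $\lam\in\fk_0^*$ such that $\fk_0$ acts on $M$ by $\lam$ and ${\rm rad}(f_\lam)M=\{0\}$. Moreover, $M\cong V^{\op n}$ as a $\fk$-module, where $V=V_{c,\lam}$ and the isomorphisms are even. By Lemma~\ref{lemfk}(a), the action of $\fk$ factors through the Clifford superalgebra $\frak{C}_\lam$, which is finite dimensional and simple. Hence $M\cong V\ot \bbbc^{n}$ (with a suitable super structure on the multiplicity space), and $\fk$ acts only on the first factor.

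Next, for each $z\in\mathscr{S}$, the commutator $[z,x]$ with $x\in\fk$ lies in $\fk_1$, since $\mathscr{S}$ brackets $\fk_1$ into $\fk_1$ and kills $\fk_0$. By (\ref{inv-rad-1}) it also preserves ${\rm rad}(f_\lam)$. So ${\rm ad}\,z$ induces an even derivation $D_z$ of $\frak{C}_\lam$: it sends $\bar y$ to $\overline{[z,y]}$, and it respects the Clifford relations because $\lam([[z,y],y'])+\lam([y,[z,y']])=\lam([z,[y,y']])=0$, as $[y,y']\in\fk_0$ is central. Since $\frak{C}_\lam$ is a finite dimensional simple associative superalgebra ($M(r|s)$ or $Q(r)$), every even derivation is inner. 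Thus $D_z={\rm ad}\,c_z$ for some even element $c_z\in\frak{C}_\lam$, which is unique modulo the even part of the supercenter. Define $\varphi_0(z)$ to be the action of $c_z$ on $V$. By linearity in $z$ (choose $c_z$ linearly on a basis), $\varphi_0(z)(xv)-x\varphi_0(z)v=[z,x]v$ for $x\in\fk$, $v\in V$; I will fix signs to match the stated condition.

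Then on $M=V\ot\bbbc^n$, the operator $z_M-\varphi_0(z)\ot 1$ commutes with the $\frak{C}_\lam$-action. Therefore it has the form $1\ot \psi(z)$ with $\psi(z)\in\End(\bbbc^n)$; in the $Q(r)$ case one must also allow the odd central element, and I would absorb that into $\varphi_0$. Since $\mathscr{S}$ is abelian, the commutator $[z_M,z'_M]$ vanishes. Expanding it gives $[\varphi_0(z),\varphi_0(z')]\ot1+1\ot[\psi(z),\psi(z')]=0$ (the cross terms vanish because $\varphi_0$ commutes with $1\ot\psi$). Now $[\varphi_0(z),\varphi_0(z')]$ commutes with $\fk$, since ${\rm ad}$ of it is $[D_z,D_{z'}]=D_{[z,z']}=0$. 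So this commutator is a scalar $\beta(z,z')$ (possibly after adjusting by odd central parts). Hence $[\psi(z),\psi(z')]=-\beta(z,z')\,\mathrm{id}$. A trace argument (or the supertrace in the $Q$ case) on the finite dimensional space $\bbbc^n$ forces $\beta=0$. So the $\psi(z)$ commute pairwise and have a common eigenvector $u$. Then $V\ot u$ is a $\fg$-submodule, and simplicity gives $n=1$.

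With $n=1$, $\psi(z)$ is a scalar, which defines the linear functional $\varrho(z)$. Setting $\varphi=\varphi_0$, the action of $z$ on $M=V$ is $\varrho(z)+\varphi(z)$, and the commutator relation for $\varphi$ holds with the scalar $-\varrho(z,z')$ of Proposition~\ref{s-c-w} (interpreting that term as the scalar obtained above, which turns out to be $0$). Thus $M\cong\Omega_{\lam,\varrho,\varphi}(V)$. The main obstacle is the derivation step: showing rigorously that ${\rm ad}\,z$ is realized by an even element of $\frak{C}_\lam$ acting on $V$, including the $Q(r)$ case where the supercenter is larger, and handling the grading and parity bookkeeping so that $\varphi(z)$ is even and linear in $z$.
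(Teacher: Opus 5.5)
Your proof is correct, but it obtains $\varphi$ by a different route from the paper.

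\textbf{The paper's route.} The paper never uses the fact that derivations of $\frak{C}_\lam$ are inner. It writes $M=U\ot V$ with a basis $u_1,\dots,u_r$ of $U$ and $z(u_i\ot v)=\sum_j u_j\ot f^z_{i,j}(v)$. It then notes that the off-diagonal blocks $f^z_{i,j}$ ($i\neq j$) and the differences $f^z_{i,i}-f^z_{1,1}$ are even $\frak{C}_\lam$-endomorphisms of $V$, hence scalars, and simply sets $\varphi(z):=f^z_{1,1}$. So the given module $M$ itself supplies an operator on $V$ implementing ${\rm ad}\,z$, and only Schur's lemma for even endomorphisms is needed; linearity of $\varphi$ in $z$ is also automatic. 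The paper then applies the trace argument on $V$ to $[\varphi(z_1),\varphi(z_2)]$ (you apply it on the multiplicity space, which works equally well) and concludes that $U$ is one-dimensional, as you do.

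\textbf{What your route costs and buys.} Your approach needs the inner-derivation theorem for $M(r|s)$ and $Q(r)$, plus a linear choice of $c_z$. In return it gives more: for \emph{every} coradical-finite $\lam$, an admissible $\varphi$ exists on $V_{c,\lam}$, automatically satisfies $[\varphi(z),\varphi(z')]=0$, and is unique up to adding $\mu(\cdot)\,\mathrm{id}$ with $\mu\in\mathscr{S}^*$. Consequently, once $\varphi$ is normalized, the finite dimensional simple $\fg$-modules are parametrized by pairs $(\lam,\varrho)$. This existence statement does not follow from the paper's proof.

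\textbf{Minor remarks.}
\begin{itemize}
\item The $Q(r)$ adjustments you anticipate are never needed. If ${\rm ad}\,c$ is an even derivation, the odd part of $c$ commutes with everything, so it lies in the odd part of the ungraded center of $Q(r)$ and contributes nothing; hence $c$ may be taken even.
\item Proposition~\ref{fdfg} provides even isomorphisms, so the multiplicity space is purely even. The even operator $z_M-\varphi_0(z)\ot 1$ therefore lies in the even part of the commutant, which is $1\ot\End(\bbbc^n)$ in both cases.
\item $[c_z,c_{z'}]$ is even and central, hence a scalar, and the ordinary trace suffices.
\item Your sign $\varphi(z)(xv)-x\varphi(z)v=[z,x]v$ is the correct one, and it is what the paper's proof actually derives. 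The $[x,z]$ in the statement of Proposition~\ref{s-c-w} is a typo, as is the term $\varrho(z,z')$, which is $0$. So do not change signs to match that statement.
\end{itemize}
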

\pf Assume $M$ is a finite dimensional simple $\fg$-module. Using Proposition~\ref{fdfg}, there is a coradical-finite  linear  functional $\lam\in \fk_0^*$ such that $\fk_0$ acts as $\lam(\cdot)$ on $M$ and  $M=U\ot V$ in which $U$ is a vector space  and  $V$ is the unique finite dimensional  $\fk$-module annihilated by ${\rm rad}(f_\lam)\cup\{x-\lam(x)\mid x\in \fk_0\}$ and 
\[x(u\ot v)=u\ot xv\quad(x\in \fk, u\in U\andd v\in V).\]
Assume $\{u_1,\ldots,u_r\}$ is a basis for $U$. For each $z\in s^2\bbbc[s^{\pm4}]$ and $v\in V,$ there are $f^z_{i,j}(v)\in V$ such that 
\[z(u_i\ot v)=\sum_{j=1}^ru_j\ot f_{i,j}^z(v).\]
For $x\in \fk,$ we have 
\begin{align*}
	\sum_{j=1}^ru_j\ot xf_{i,j}^z(v)=xz(u_i\ot v)=zx(u_i\ot v)+[x,z](u_i\ot v)=&z(u_i\ot xv)+(u_i\ot [x,z]v)\\
	=&\sum_{j=1}^ru_j\ot f_{i,j}^z(xv)+(u_i\ot [x,z]v).
\end{align*}
This implies that 
\[xf_{i,i}^z(v)=f_{i,i}^z(xv)+[x,z]v\andd xf_{i,j}^z(v)=f_{i,j}^z(xv)\quad(j\neq i);\] in particular, for $i\neq j,$ $f_{i,j}^z$ is an even $\frak{C}_\lam$-module homomorphism and so it is a scalar, say e.g., $\lam_{i,j}^z.$ Also,  $f_{i,i}^z-f_{1,1}^z$ is an even $\frak{C}_\lam$-module homomorphism which implies that $f_{i,i}^z=\lam_{i,i}^z{\rm id}+f_{1,1}^z$ for some $\lam^z_{i,i}\in\bbbc.$
This implies that 
\[z(u_i\ot v)=((\sum_j\lam_{i,j}^zu_j)\ot v)+(u_i\ot f_{1,1}^z(v)).\]

Set $\varphi(z):=f_{1,1}^z:V\longrightarrow V$ and define
\begin{align*}
	\psi(z):&U\longrightarrow U\quad \quad u_i\mapsto \sum_{j=1}^r\lam_{i,j}^zu_j\quad(1\leq i\leq r).
\end{align*} 
For $x\in \fk,$ $z,z_1,z_2\in \mathscr{S}=s^2\bbbc[s^{\pm4}]$
%$u\in U$ 
and $v\in V,$ we have 
%\begin{align*}
%(\psi(z)u\ot xv)+(u\ot \varphi(z)(xv))=z(u\ot xv)=&z(x(u\ot v))\\
%=&x(z(u\ot v))+[z,x](u\ot v)\\
%=&x((\psi(z)u\ot v)+(u\ot \varphi(z)v))+(u\ot [z,x]v)\\
%=&(\psi(z)u\ot xv)+(u\ot x\varphi(z)(v))+(u\ot [z,x]v)
%\end{align*}
%This shows that 
\begin{equation*}\label{equality-6}
	\varphi(z)(xv)=x\varphi(z)(v)+[z,x]v.
\end{equation*}
We also have, for $z_1,z_2\in \mathscr{S}$, $x\in \fk$ and $v\in V,$ that 
%On the other hand using (\ref{equality-6}), we have 
%\begin{align*}
%\varphi(z_2)\varphi(z_1)(xv)=&\varphi(z_2)(x\varphi(z_1)(v)+[z_1,x](v))\\
%=& x\varphi(z_2)\varphi(z_1)(v)+[z_2,x]\varphi(z_1)(v)+[z_1,x]\varphi(z_2)(v)+[z_2,[z_1,x]]v
%\end{align*}
%So
\begin{align*}
	\varphi(z_2)\varphi(z_1)(xv)-\varphi(z_1)\varphi(z_2)(xv)
	%=& x\varphi(z_2)\varphi(z_1)(v)+[z_2,x]\varphi(z_1)(v)+[z_1,x]\varphi(z_2)(v)+[z_2,[z_1,x]]v\\
	%-&x\varphi(z_1)\varphi(z_2)(v)-[z_1,x]\varphi(z_2)(v)-[z_2,x]\varphi(z_1)(v)-[z_1,[z_2,x]]v\\
	%=& x(\varphi(z_2)\varphi(z_1)-\varphi(z_1)\varphi(z_2))(v)+[[z_2,z_1],x]v\\
	=& x(\varphi(z_2)\varphi(z_1)-\varphi(z_1)\varphi(z_2))(v)
\end{align*}
This means that $\varphi(z_1)\varphi(z_2)-\varphi(z_2)\varphi(z_1)$ is an even $\fk$-module homomorphism and so it is a scalar $\lam_{z_1,z_2}$. Therefore, this scalar  is zero as $V$ is finite dimensional and ${\rm tr}(\varphi(z_1)\varphi(z_2)-\varphi(z_2)\varphi(z_1))=0$.
On the other hand, for $z_1,z_2\in \mathscr{S}$, $x\in \fk$, $u\in U$ and $v\in V,$ we have  
%\begin{align*}
%z_1(z_2(u\ot v))=&z_1((\psi(z_2)u\ot v)+(u\ot \varphi(z_2)v))\\
%=&(\psi(z_1)\psi(z_2)u\ot v)+(\psi(z_2)u\ot \varphi(z_1)v)+ (\psi(z_1)u\ot \varphi(z_2)v)+(u\ot \varphi(z_1)\varphi(z_2)v).
%\end{align*}
%So, 
{\small \begin{align*}
		0=&z_1(z_2(u\ot v))-z_2(z_1(u\ot v))\label{heisenberg}\\
		%\\=&(\psi(z_1)\psi(z_2)u\ot v)+(\psi(z_2)u\ot \varphi(z_1)v)+ (\psi(z_1)u\ot \varphi(z_2)v)+(u\ot \varphi(z_1)\varphi(z_2)v)\nonumber\\
		%-&(\psi(z_2)\psi(z_1)u\ot v)-(\psi(z_1)u\ot \varphi(z_2)v)- (\psi(z_2)u\ot \varphi(z_1)v)-(u\ot \varphi(z_2)\varphi(z_1)v)\label{heisenberg}\\
		=&((\psi(z_1)\psi(z_2)-\psi(z_2)\psi(z_1))u\ot v)+(u\ot (\varphi(z_1)\varphi(z_2)-\varphi(z_2)\varphi(z_1))v).\nonumber\\
		=&(\psi(z_1)\psi(z_2)-\psi(z_2)\psi(z_1))u\ot v,
	\end{align*}
	and so we have 
	\[\psi(z_1)\psi(z_2)-\psi(z_2)\psi(z_1)=0.\]
	In particular, $U$ is a finite dimensional simple module over the abelian Lie algebra  $\mathscr{S}$ and so $U$ is one dimensional determined with a functional $\varrho.$
	\qed

	\subsection{Bounded finite weight $\bbbz$-graded simple  $\fg$-modules}
	
	\begin{lem}\label{sub-module-g}
		Recall (\ref{grading}) and suppose that $U$ is a  finite dimensional simple $\fg$-module  on which $t^2\bbbc[t^{\pm4}]$ acts nontrivially and set 
		\[V:=L(U):=U\ot\bbbc[t^{\pm1}].\]
		Define 
		{\begin{align*}
				\cdot:&\fg\times V\longrightarrow V\\
				& (x,u\ot t^n):=xu\ot t^{n+m} \quad\quad(x\in \fg^m).
			\end{align*}
		}
		The $\bbbz$-graded $\fg$-module $V$ has a  $\bbbz$-graded simple $\fg$-submodule.
	\end{lem}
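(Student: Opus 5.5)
The plan is to exploit the fact that some element of $t^2\bbbc[t^{\pm4}]$ acts on $V$ as a nonzero multiple of an injective degree-shifting operator commuting with $\fg$. Then I would run a finite-dimensionality/stabilization argument, degree by degree, to produce a minimal graded submodule.

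\textbf{Setup.} By Proposition~\ref{fdfg}, $\fk_0=A\op t^2\bbbc[t^{\pm4}]$ acts on $U$ by a functional $\lam$. Since $t^2\bbbc[t^{\pm4}]$ acts nontrivially, there is $m\in 4\bbbz+2$ with $a:=\lam(t^m)\neq0$. Define $T:V\to V$ by $u\ot t^n\mapsto u\ot t^{n+m}$. Then $T$ is a bijection of $V$ sending $V^n=U\ot t^n$ onto $V^{n+m}$. It commutes with the action of $\fg$, because $x(u\ot t^n)=xu\ot t^{n+\deg x}$ and $T$ only shifts the $t$-exponent. Moreover $t^m$ acts on $V$ as $aT$. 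Consequently, for every graded submodule $W$ we have $TW=t^mW\sub W$, and each $T^jW$ ($j\ge0$) is again a graded submodule.

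\textbf{Choosing $X$.} For a graded submodule $W$, let $\widetilde W:=\{v\in V\mid T^jv\in W \hbox{ for some } j\ge0\}$. This is a graded submodule containing $W$ (because $T$ commutes with $\fg$), and it satisfies $T^{\pm1}\widetilde W=\widetilde W$. For a $T^{\pm1}$-stable graded submodule $X$, the map $T$ gives isomorphisms $X^n\cong X^{n+m}$. Hence the invariant $\delta(X):=\sum_{i=0}^{m-1}\dim X^i\le m\dim U$ is finite, and it is positive when $X\neq0$. Starting from $V$ itself, I would pick a nonzero $T^{\pm1}$-stable graded submodule $X$ with $\delta(X)$ minimal, and then claim that $X$ is $\bbbz$-graded simple.

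\textbf{Simplicity of $X$.} Let $0\neq Y\sub X$ be a graded submodule. First, $\widetilde Y\sub\widetilde X=X$ is nonzero and $T^{\pm1}$-stable. Minimality of $\delta$ together with $\widetilde Y^n\sub X^n$ forces $\widetilde Y=X$. Since $\dim X^n$ is bounded, for each degree $n$ there is then a $j_0$ with $T^jY^{n-jm}=X^n$ for all $j\ge j_0$. (Here I use that $Y^{n-jm}\to Y^{n-jm+m}$ via $T$ is injective, so the dimensions increase along each residue class and eventually fill $X^n$.)

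The step I expect to need the most care is the passage back from $\widetilde Y=X$ to $Y=X$. For this, set $Z:=\bigcap_{j\ge0}T^jY$, an intersection of a decreasing chain of graded submodules of $Y$. In each fixed degree $n$, the chain $T^jY^{n-jm}\sub X^n$ lies in a finite-dimensional space and, by the previous paragraph, equals $X^n$ for all large $j$. Since the chain is decreasing, it must equal $X^n$ for every $j$. Hence $Z^n=X^n$ for all $n$, so $X=Z\sub Y$. Thus $Y=X$, and $X$ is a $\bbbz$-graded simple $\fg$-submodule of $V=L(U)$.
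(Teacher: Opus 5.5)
\textbf{There is a genuine gap.} Your setup is sound up to $\widetilde Y=X$, but the next step is wrong, and it is the step that carries the whole argument.

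From $\widetilde Y=X$ you only get $X^n=\bigcup_{j\ge0}T^{-j}Y^{n+jm}$. This is an increasing union, so $Y^{n+jm}=X^{n+jm}$ for $j\gg0$: $Y$ fills $X$ far out in the direction of $m$. Your chain $T^jY^{n-jm}$ comes from the opposite side. There, injectivity of $T$ makes $\dim Y^{n-jm}$ non-increasing in $j$, so nothing forces it to reach $\dim X^n$. Because the chain is decreasing, the claim ``$T^jY^{n-jm}=X^n$ for $j\ge j_0$'' is equivalent to $Y^n=X^n$, which is what you want to prove.

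Said differently, your $Z=\bigcap_jT^jY$ satisfies $TZ=Z$, so minimality gives $Z=0$ or $Z=X$. You have no way to exclude $Z=0$, which happens exactly when, in every residue class mod $m$, $\supp(Y)$ is bounded on the $-m$ side.

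No argument using a single injective degree-shifting operator can close this. Take an abelian $\bbbz$-graded Lie algebra in which one degree-$1$ element acts on $\bbbc[t^{\pm1}]$ by multiplication by $t$ and everything else acts by zero. All the properties you use hold. $V$ is the only nonzero $T^{\pm1}$-stable graded submodule, and every $Y=\sspan\{t^k\mid k\ge N\}$ has $\widetilde Y=V$. Yet there is no graded simple submodule at all.

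The missing ingredient is an injective operator that shifts degrees the other way, and this is what the paper's proof supplies. Since $U$ is finite dimensional, $\lam$ is coradical-finite by Lemma~\ref{lemfk}(b). As $\fk_1$ is infinite dimensional, $f_\lam$ is therefore degenerate. Lemma~\ref{degenerate}, applied with both signs, then gives positive $r,s\in 4\bbbz+2$ with $\lam(t^r)\neq0\neq\lam(t^{-s})$.

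For every graded submodule $M$ this gives $\dim M^k\le\dim M^{k+r}\le\dim M^{k+rs}\le\dim M^k$, so $t^r$ maps $M^k$ onto $M^{k+r}$. Hence every graded submodule is automatically stable under your $T^{\pm1}$ with $m=r$. Then $r_M=\sum_{i=0}^{r-1}\dim M^i$, which is your $\delta$, strictly decreases along proper inclusions, so a minimal nonzero graded submodule exists without any need for $\widetilde Y$ or $Z$. With this input your own argument also closes: $Y^k\neq0$ forces $Y^{k-jrs}\neq0$ for all $j\ge0$, hence $Z\neq0$.
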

	\pf  Since $U$ is a finite dimensional, it contains a simple $\fk$-submodule  $W$.  Also as it is simple, we have  $U=U(\fg)W=U(\mathscr{S})W$ where $\mathscr{S}=s^2\bbbc[s^{\pm4}]$. By   Proposition~\ref{fdfg}, there is a coradical-finite linear functional $\lam$ on $\fk_0$ such that $x\in \fk_0$ acts as $\lam(x)$  on $U$. For $x\in \fk_0,$ $z\in U(\mathscr{S})$ and $w\in W,$ we have 
	\(xzw=zxw=\lam(x)zw,\)
	which in turn implies that  $t^2\bbbc[t^{\pm4}]$ acts nontrivially on $W$.
	
	Since $\fk_1/{\rm rad}\fm_\lam$ is finite dimensional, $\fm_\lam$ is degenerate and so by Lemma~\ref{degenerate},  there are positive integers $r,s\in4\bbbz^{\geq 0}+2$ such that $\lam(t^r)$ and $\lam(t^{-s})$  are nonzero. Using the actions of $t^r$ and $t^{-s}$ on $U\ot t^k,$ we get injections 
	$t^{r}:U\ot t^k\longrightarrow U\ot t^{k+r}$ and  $t^{-s}:U\ot t^k\longrightarrow U\ot t^{k-s}$. Suppose  $M=\Bigop{k\in \bbbz}{}M^k=\Bigop{k\in \bbbz}{}(M\cap V^k)$ is a $\bbbz$-graded submodule of $V$, we have 
	\begin{align*}
		\dim(M^k)\leq \dim(M^{k+r})\leq \dim(M^{k+sr})\leq \dim(M^{k}).
	\end{align*}
	\begin{comment}
		The second inequality is due to applying  $t^r$ ($s$-times) and the third  inequality is due to applying $t^{-s}$ ($r$-times).
	\end{comment}
	This shows that $\dim(M^k)= \dim(M^{k+r})$ for all $k\in\bbbz$
	and so $\dim(M^{k+rj})=\dim(M^{k})$ for all $k\in\bbbz$ and  $j\in\bbbz^{\geq 0}.$ Set
	\[r_{_M}:=\sum_{i=0}^{r-1}\dim(M^i).\] If $N\subsetneq M$ are two $\bbbz$-graded submodules of $V$, there is $k\in \bbbz$ such that $\dim(N^k)<\dim(M^k)$ and so there is $0\leq k\leq r-1$ such that $\dim(N^k)<\dim(M^k)$. Therefore, we have $r_N< r_M.$
	\begin{comment}
		Note that each $k\in \bbbz$ is of the form $k=pr+s$ for some $0\leq s\leq r-1$. If $p>0,$ $\dim(M^s)=\dim(M^{s+pr})$ and if $p<0,$ $\dim(M^k)=\dim(M^{k-pr})=\dim(M^s).$
	\end{comment}
	
	Now, to the contrary assume $V$ does not contain a simple $\bbbz$-graded submodule. Therefore, we have a chain 
	$\cdots\sub N_3\subsetneq N_2\subsetneq N_1\subsetneq V$ of graded submodules of $V$ which in turn implies that 
	$\cdots<r_{_{N_3}}<r_{_{N_2}}<r_{_{N_1}}<r_{_V}.$ It is a contradiction and so we are done.\qed
	\begin{Pro}
		Assume $U$ is a finite dimensional simple $\fg$-module on which $t^2\bbbc[t^{\pm4}]$ acts nontrivially. Then, the  $\bbbz$-graded $\fg$-module $V:=L(U)$ is completely reducible in the sense that it is a direct sum of   $\bbbz$-graded simple  $\fg$-submodules.
	\end{Pro}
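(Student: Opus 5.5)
The plan is the standard "sum of simple submodules" argument for loop modules, with Lemma~\ref{sub-module-g} supplying the simple pieces. The key observation is that $V=L(U)$ carries degree-shift automorphisms. For $j\in\bbbz$, define $\theta_j:V\longrightarrow V$ by $u\ot t^n\mapsto u\ot t^{n+j}$. This map is a $\fg$-module isomorphism of degree $j$, since the action $x\cdot(u\ot t^n)=xu\ot t^{n+m}$ commutes with shifting $n$. Consequently $\theta_j$ carries $\bbbz$-graded simple submodules to $\bbbz$-graded simple submodules.

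Step 1. Lemma~\ref{sub-module-g} gives a $\bbbz$-graded simple submodule $M\sub V$. Let $S:=\sum_{j\in\bbbz}\theta_j(M)$. This is a graded submodule and a sum of graded simple submodules.

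Step 2. Show that $S=V$. The homogeneous components of $M$ have the form $M^k=M_k\ot t^k$ with $M_k\sub U$, and $M\neq 0$. Then
\[
S^k=\Big(\sum_j M_{k-j}\Big)\ot t^k=\Big(\sum_{n} M_n\Big)\ot t^k .
\]
So it suffices to show $W:=\sum_n M_n$ equals $U$. Take $x\in\fg^m$ and $u\in M_n$. Then $x(u\ot t^n)=xu\ot t^{n+m}\in M$, so $xu\in M_{n+m}\sub W$. Hence $W$ is a nonzero $\fg$-submodule of $U$, and $W=U$ because $U$ is simple. Therefore $V=S$ is a sum of graded simple submodules.

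Step 3. Apply the standard argument: a graded module that is a sum of graded simple submodules is a direct sum of a subfamily of them. Choose a maximal subfamily whose sum is direct; this exists by Zorn's lemma, since directness is a finitary condition. If some $\theta_j(M)$ were not contained in that sum, its intersection with the sum would be a graded submodule of $\theta_j(M)$. By graded simplicity this intersection is zero, so $\theta_j(M)$ could be added to the family, contradicting maximality.

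The main potential obstacle is confirming that $\theta_j$ respects the super structure and the module action; this is immediate from the definition. The substantive input, the existence of a graded simple submodule, is already Lemma~\ref{sub-module-g}. By the periodicity argument in that lemma's proof, only finitely many distinct shifts are needed, but Zorn's lemma avoids having to use this.
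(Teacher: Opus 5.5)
Your proposal is correct and follows essentially the same route as the paper. Your $M_k$ are the paper's $U(k)$, you show $\sum_k U(k)=U$ by simplicity of $U$, and you conclude $V=\sum_r\theta_r(M)$ with each shift isomorphic to $M$. The only addition is that you spell out the standard Zorn's-lemma step from a sum of graded simple submodules to a direct sum, which the paper leaves implicit.
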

	\pf  Using Lemma~\ref{sub-module-g}, we pick a  simple $\bbbz$-graded submodule $M$ of $V$.
	Set
	\[U(k):=\{u\in U\mid u\ot t^k\in M^k\}\quad(k\in \bbbz).\]
	We have $\fg^r (U(k)\ot t^k)=(\fg^r U(k))\ot t^{k+r}$ for $r,k\in\bbbz.$ This shows that $\fg^r U(k)\sub U(k+r)$. So, $\sum_{k\in\bbbz}U(k)$ is a $\fg$-submodule of $U$ and so $U=\sum_{k\in\bbbz}U(k).$
	We have 
	\begin{align*}
		U\ot \bbbc[t^{\pm1}]=(\sum_{k\in \bbbz}U(k))\ot \bbbc[t^{\pm1}]=\sum_{{{r\in\bbbz}}}\sum_{k\in \bbbz}U(k)\ot t^{k+r}.
	\end{align*}
	Fix $r\in\bbbz.$ The map 
	\begin{align*}
		\varphi:&M\longrightarrow \sum_{k\in \bbbz}U(k)\ot t^{k+r}\\
		& u\ot t^k\mapsto u\ot t^{k+r}\quad (u\in U(k))
	\end{align*}
	is a $\fg$-module  isomorphism.\qed

	\medskip

	\begin{Thm}[{Characterization of bounded finite weight  $\bbbz$-graded simple $\fg$-module}]
		Suppose  that  {{$V=\op_{m\in \bbbz}V^m$}} is a  bounded finite weight  $\bbbz$-graded simple $\fg$-module. Then, one of the following happen:
		\begin{itemize}
			\item ${t^2\bbbc[t^{\pm4}]}$ acts non-trivially on ${V}$. In this case, there is a finite dimensional simple $\fg$-module $U$ such that $V$ is isomorphic to a simple component of $L(U).$
			\item ${t^2\bbbc[t^{\pm4}]}$ acts trivially on ${V}.$ In this case, $t^2{\bbbc[t^{\pm4}]}\op t{\bbbc[t^{\pm4}]}\op t^{-1}{\bbbc[t^{\pm4}]}$ acts trivially on ${V}$; in particular, $V$ is a finite weight $\bbbz$-graded simple module over the abelian $\bbbz$-graded Lie algebra $A\op s^2\bbbc[s^{\pm4}];$ see \S~\ref{associative-MM} 
		\end{itemize}
	\end{Thm}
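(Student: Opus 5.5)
The plan is to handle the two bullets separately, since they call on different earlier results.

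\emph{Second bullet.} Suppose $t^2\bbbc[t^{\pm4}]$ acts trivially on $V$. Then $\sum_{k}\bbbc t^{4k+2}V=\{0\}$, so Proposition~\ref{M--28} applies to $V$ viewed as a bounded finite weight $\bbbz$-graded $\cQ$-module. It gives a nonzero homogeneous $u\in V$ with $t^{4k\pm1}u=0$ for all $k$. Set
\[W:=\{v\in V\mid \cQ_1 v=\{0\}\}.\]
This is a $\bbbz$-graded subspace, and $u\in W$, so $W\neq\{0\}$. We check that $W$ is $\fg$-stable:
\begin{itemize}
\item $A$ and $t^2\bbbc[t^{\pm4}]$ are central, so they preserve $W$.
\item For $z\in\mathscr S=s^2\bbbc[s^{\pm4}]$, $y\in\cQ_1$ and $v\in W$, we have $y(zv)=z(yv)\pm[y,z]v$. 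Since $[y,z]\in\cQ_1$, both terms vanish.
\item For $y'\in\cQ_1$ and $v\in W$, we have $y(y'v)=-y'(yv)+[y,y']v$. Here $[y,y']\in t^2\bbbc[t^{\pm4}]$ acts as zero, so this is $0$.
\end{itemize}
By graded simplicity $W=V$, so $\cQ_1$ and $t^2\bbbc[t^{\pm4}]$ act trivially on $V$. Then $V$ is a graded simple module over the abelian quotient $A\op\mathscr S$. Its universal enveloping algebra is a commutative $\bbbz$-graded algebra, so \S\ref{associative-MM} describes $V$.

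\emph{First bullet.} Suppose $t^2\bbbc[t^{\pm4}]$ acts nontrivially on $V$. The aim is to produce a finite dimensional simple $\fg$-module $U$ and a nonzero graded homomorphism $V\to L(U)$. The idea is to pass to a ``loop quotient''. First, the central elements $\fk_0=A\op t^2\bbbc[t^{\pm4}]$ act on graded simple $V$ through the graded-simple commutative algebra picture of \S\ref{associative-MM}. So there is an algebra homomorphism $\psi$ with $x v=\psi(x)\,\theta_m v$ for $x\in\fk_0^m$, where the $\theta_m$ are invertible shift operators on the support lattice. Since some $t^{4k+2}$ acts nontrivially, it acts injectively on $V$ by centrality and simplicity. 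Next pick a period $N$ in $\supp$ with a central element $c$ of degree $N$ acting bijectively. Normalize $c=1$, i.e.\ form $U:=V/(c-1)V$, or equivalently identify $U$ with $\bigoplus_{i=0}^{N-1}V^i$. Because $V$ is bounded with finite dimensional homogeneous spaces, $U$ is finite dimensional, and it carries a $\fg$-action, with $\fk_0$ acting by $\lam:=\psi$ suitably rescaled. Replace $U$ by a simple quotient $U'$; by Proposition~\ref{fdfg} its structure is controlled. Finally, the map $V\to L(U')$, $v\mapsto \bar v\ot t^m$ for $v\in V^m$, is a nonzero graded $\fg$-homomorphism. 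By graded simplicity it is injective, and its image is a graded simple submodule. By the complete reducibility proposition for $L(U')$, this image is a simple component.

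\emph{Main obstacle.} The hardest step is constructing the central element of degree $N$ acting invertibly, together with a compatible normalization, so that $(c-1)V$ is a proper $\fg$-submodule (in the non-graded sense) and the evaluation map to $L(U')$ is well defined. If a single homogeneous central element does not suffice, the fallback is to work with the ideal $\ker\psi$. One takes the quotient $V/\mathfrak m V$ for a maximal ideal $\mathfrak m$ of the degree-zero part of $\psi(U(\fk_0))$, which is a Laurent polynomial ring; this plays the role of evaluating at a point. Boundedness would then be used to ensure the quotient is finite dimensional and nonzero.
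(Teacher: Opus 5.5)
Your proposal is correct and follows the paper's own proof. When $t^2\bbbc[t^{\pm4}]$ acts trivially, you apply Proposition~\ref{M--28} and use the nonzero graded submodule annihilated by $\cQ_1$. When it acts nontrivially, you take a homogeneous $t^r$ ($r\in4\bbbz+2$) acting bijectively, pass to a finite dimensional simple quotient $U'$ of $V/(t^r-1)V$, and embed $V$ into $L(U')$ via $v\mapsto\bar v\ot t^m$.

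The step you flag as the main obstacle is routine, and the fallback is unnecessary. No rescaling is needed: on the simple quotient $t^r$ acts as $1$ and $\fk_0$ acts by scalars automatically, and the evaluation map is equivariant by construction. Moreover, $(t^r-1)V$ is proper: since $t^r$ is injective of nonzero degree, for $v\neq0$ the vector $(t^r-1)v$ has nonzero components in two distinct degrees, coming from the lowest and highest components of $v$. Hence $(t^r-1)V$ contains no nonzero homogeneous vector. The paper itself leaves this point implicit.
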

	
	\begin{proof} We carry out the proof in the following two cases:
		\medskip

		\noindent $\bullet$ {\bf $\bs{t^2\bbbc[t^{\pm4}]}$ acts non-trivially on $\bs{V}:$} Suppose that $r\in 4\bbbz+2$ and $t^r:V\longrightarrow V$ is nontrivial. Since $[t^r,\fg]=\{0\}$ and $V$ is $\bbbz$-graded simple, $t^r$ is a $\fg$-module isomorphism, in particular, $V=t^r V.$ Set \[W:=\{t^rv-v\mid v\in V\}.\] Then,  $W$ is a $\fg$-submodule  of $V$; we emphasize that it is a $\fg$-submodule not a   $\bbbz$-graded $\fg$-module. We have $V=W+ \Bigop{i=0}{|r|-1}V^i.$ In particular, {{$V/W$}} is finite dimensional. Pick a $\fg$-submodule $U$ of $V$ containing $W$ such that $U/W$ is a maximal submodule of $V/W.$ We will get that $(V/W)/(U/W)\simeq V/U$ is a finite dimensional simple $\fg$-module. 
		
		\medskip

		Suppose that $\bar{~~}: V\longrightarrow V/U$ is the canonical projection map.  Then, $L(\bar V)=\bar V\ot \bbbc[t^{\pm1}]$ is a $\bbbz$-graded $\fg$-module with the action
		\begin{align*}
			\cdot:&\fg\times (\bar V\ot \bbbc[t^{\pm1}])\longrightarrow \bar V\ot \bbbc[t^{\pm1}]\\
			&(x,\bar v\ot t^k)\mapsto \overline{x v}\ot t^{s+k}\quad\quad(s,k\in\bbbz,~v\in V,~x\in\fg^s).
		\end{align*}
		We have 
		\[\bar V\ot \bbbc[t^{\pm1}]=\sum_{j\in\bbbz}\sum_{m\in \bbbz}(\overline{V^m}\ot t^{m+j})\] and that for $j\in\bbbz,$
		\begin{align*}
			\psi_j: &V\longrightarrow \sum_{m\in \bbbz}(\overline{V^m}\ot t^{m+j})\\
			&v\mapsto  \bar v\ot t^{m+j}\quad\quad\quad\quad(v\in V^{m})
		\end{align*}
		is a $\fg$-module  isomorphism of degree $j$. This means that, up to isomorphism, $V$ is a simple component of $\bar V\ot \bbbc[t^{\pm1}].$
		\medskip
		
		\noindent  $\bullet$ {\bf $\bs{t^2\bbbc[t^{\pm4}]}$ acts trivially on $\bs{V}:$}  In this case, $V$ is a nonzero bounded finite weight $\bbbz$-graded $\cQ$-module such that 
		$\Bigop{k\in\bbbz}{}\bbbc t^{4k+2} V=\{0\},$ so by  Proposition~\ref{M--28}, there is a nonzero homogeneous vector $u$ with $\Bigop{k\in\bbbz}{}\bbbc t^{2k+1} u=\{0\}.$ 
		So, $\{v\in V\mid \fg_1 v=\{0\}\}= \{v\in V\mid \fk_1 v=\{0\}\}$ is a nonzero graded  $\fg$-submodule. Therefore, $V$ is a finite weight $\bbbz$-graded simple module over the abelian Lie algebra $A\op s^2\bbbc[s^{\pm4}].$ 
	\end{proof}

	\subsection{Affine Lie superalgebras}\label{affine-MM}
	Suppose that $\mathscr{G}$ is a basic classical simple Lie superalgebra of type $Y$ with standard Cartan subalgebra ${H}$ and corresponding root system $\dot {\mathfrak{s}}.$ Assume $\fm$ is an invariant   nondegenrate supersymmetric even bilinear form on $\mathscr{G}.$ One knows that if  $Y=D(k+1,\ell) (\ell\neq 0), A(k,\ell) ((k,\ell)\neq (1,1),(0,0))$, then,  there is an automorphism $\dot \sg:\mathscr{G}\longrightarrow \mathscr{G}$ of order 
	\[l=\left\{\begin{array}{ll}
		4& Y=A(2m,2n)\\
		2&\hbox{otherwise}
	\end{array}
	\right.
	\] such that $\sg({H})\sub {H}.$ Throughout this subsection, we assume 
	\begin{align*}
		\sg:=\left\{
		\begin{array}{ll}
			\dot\sg &  Y=D(k+1,\ell), A(k,\ell),\\
			{\rm id} & \hbox{otherwise}.
		\end{array}
		\right.
	\end{align*}
	The automorphism $\sg$ induces a linear automorphism on the dual space ${H}^*$ of ${H},$  mapping $\a\in{H}^*$ to $\a\circ \sg^{-1}.$  By the abuse of notations, we denote this new automorphism by $\sg$ as well. 
	Considering the root space decomposition $\displaystyle{\mathscr{G}=\Bigop{\dot\a\in\dot{\frak{s}}}{}\mathscr{G}^{\dot\a}}$ of $\mathscr{G}$ with respect to ${H},$ we get the weight space decomposition 
	$\mathscr{G}=\Bigop{\a\in \dot R}{}\mathscr{G}^{(\a)}$  of $\mathscr{G}$ with respect to the fixed point subalgebra $\fh$ of $H$  under $\sg$, in which 
	\[\dot R=\{\pi(\dot\a):=\dot\a|_{_{\fh}}\mid \dot\a\in \dot{\frak{s}}\}\]
	and \[\mathscr{G}^{(\pi(\dot\a))}=\sum_{\substack{\dot\b\in\dot{\frak{s}}\\
			\pi(\dot\a)=\pi(\dot\b)}}\mathscr{G}^{\dot\b}=\sum_{i=0}^{l-1}\mathscr{G}^{\sg^i(\dot\a)}.\]
	Moreover, for the $l$-th primitive root $\zeta$ of unity and 
	\[{}^{[j]}\mathscr{G}:=\{x\in \mathscr{G}\mid \sg(x)=\zeta^jx\}\quad\quad(j\in\bbbz),\] we have 
	\[\mathscr{G}=\Bigop{j=0}{l-1}{}^{[j]}\mathscr{G}\andd 
{}^{[j]}\mathscr{G}=\Bigop{\a\in \dot{\mathfrak{s}}}{}{}^{[j]}\mathscr{G}^{(\a)}\quad{\rm with } \quad{}^{[j]}\mathscr{G}^{(\a)}={}^{[j]}\mathscr{G}\cap \mathscr{G}^{(\a)} \quad (j\in \bbbz).\]
	Assume $\bbbc c\op\bbbc d$ is a two dimensional vector space and set  \[\LL:=\Bigop{j=0}{l-1}({}^{[j]}\mathscr{G}\ot t^j\bbbc[t^{\pm l}])\op\bbbc c\op\bbbc d\andd \LL_c:=\Bigop{j=0}{l-1}({}^{[j]}\mathscr{G}\ot t^j\bbbc[t^{\pm l}])\op\bbbc c.\]
	The superspace  $\LL$ together with the bracket 
	$$[x\ot t^p+rc+sd,y\ot t^q+r'c+s'd]:=[x,y]\ot t^{p+q}+p\kappa(x,y)\d_{p+q,0}c+sqy\ot t^q-s'px\ot t^p$$
	is a Lie superalgebra called twisted affine Lie superalgebra of type $Y^{(l)}$ if $\sg=\dot\sg$ (remember that $l$ is the order of $\sg$) and it is called an untwisted affine Lie superalgebra of type $Y^{(1)}$ if $Y\neq A(\ell,\ell)$ and $\sg={\id}.$  The subspace $\LL_c$ of $\LL$ is an ideal of $\LL$ called the {\it core} of $\LL$ whose center is $Z(\LL_c)=\bbbc c.$ The quotient algebra $\LL_{cc}=\LL_c/\bbbc c$ is called the {\it centerless core} of $\LL.$
	\begin{rem}
		{\rm The differences between the cases $X\neq A(2m,2n)^{(4)}$ and $X=A(2m,2n)^{(4)}$ are significant in the discussions under consideration.
			More precisely, for all types other that type $X=A(2m,2n)^{(4)}$, we have $\pi(\dot\a)=\dot\a\mid_{\fh}=0$ ($\dot\a\in\dot{\frak{s}}$) if and only if $\dot\a=0.$ 
			This, in particular, gives that if $X\neq A(2m,2n)^{(4)},$ for $0\neq k\in \bbbz,$ we have $\LL^{k\d}={}^{[k]}\mathscr{G}^{(0)}\ot t^k\sub H\ot t^k$
			and so for $0\neq k,k'\in\bbbz,$ $[\LL^{k\d},\LL^{k'\d}]=\{0\}$ up to $\bbbc c,$ that is 
			\[\mathscr{L}:=\Bigop{0\neq k\in\bbbz}{}\LL^{k\d}\] is a $\bbbz$-graded abelian subalgebra of $\LL_{cc},$
			but it is not  the case for $X=A(2m,2n)^{(4)}.$ 
			In this regard, we need some information about type $X=A(2m,2n)^{(4)}$ which we will gather in \S~\ref{type 2m-2n}.}
	\end{rem}

	\medskip

	The subspace $\hh:=\fh\op\bbbc c \op\bbbc d$ is the standard Cartan subalgebra of $\LL;$  the root system of $\LL$ with respect to $\hh$ is 
	\[R=\{\pi(\dot\a)+k\d\mid k\in\bbbz, \dot\a\in\dot{\frak{s}}, {}^{[k]}\mathscr{G}^{(\pi(\dot\a))}\neq \{0\}\}.\]  
	In particular,
	\[
	\parbox{3in}{$R=\dot{\frak{s}}+\bbbz\d$ if $X=Y^{(1)}$ ($Y\neq  A(\ell,\ell)$).}
	\]
	For $\pi(\dot\a)+k\d\in R\setminus\{0\},$  we have \[\LL^{\pi(\dot\a)+k\d}={}^{[k]}\mathscr{G}^{(\pi(\dot\a))}\ot t^k\andd \LL^{0}=\fh\op\bbbc c\op\bbbc d.\]  We mention that the definition of an affine Lie superalgebra of type $A(\ell,\ell)^{(1)}$ is slightly different from the one stated for  the other types; for the details see \cite[Exa. 3.1]{DSY}. 
	The bilinear form on $\scg$ induces the following even  supersymmetric invariant nondegenerate bilinear form  
	\[(x\ot t^p+rc+sd,y\ot t^q+r'c+s'd)=\d_{p+q,0}(x,y)+rs'+sr'\] on $\LL.$ This bilinear form is nondegenerate on $\hh$; in particular, it naturally induces a symmetric nondegenerate bilinear form on $\hh^*$ denoted again by $\fm.$  
	If $\LL$ is a twisted affine Lie superalgebra, the root system $R$ of $\LL$ can be expressed as shown  in the following table:
	
	\medskip
	
	\begin{table}[h]\caption{Root systems of twisted affine Lie superalgebras} \label{table1}
		% "h" means that the table will be placed "here", if we use [t], it will be placed on "top" and [b] indicates the "below". If we do not use this command, the table is floating.
		{\footnotesize \begin{tabular}{|c|l|}
				\hline
				$\hbox{Type}$ &\hspace{3.25cm}$R$ \\
				\hline
				$\stackrel{A(2m,2n-1)^{(2)}}{{\hbox{\tiny$(m,n\in\bbbz^{\geq0},n\neq 0)$}}}$&$\begin{array}{rcl}
					\bbbz\d
					&\cup& \bbbz\d\pm\{\ep_i,\d_p,\ep_i\pm\ep_j,\d_p\pm\d_q,\ep_i\pm\d_p\mid i\neq j,p\neq q\}\\
					&\cup& (2\bbbz+1)\d\pm\{2\ep_i\mid 1\leq i\leq m\}\\
					&\cup& 2\bbbz\d\pm\{2\d_p\mid 1\leq p\leq n\}
				\end{array}$\\
				\hline
				$\stackrel{A(2m-1,2n-1)^{(2)}}{{\hbox{\tiny$(m,n\in\bbbz^{>0},(m,n)\neq (1,1))$}}}$& $\begin{array}{rcl}
					\bbbz\d&\cup& \bbbz\d\pm\{\ep_i\pm\ep_j,\d_p\pm\d_q,\d_p\pm\ep_i\mid i\neq j,p\neq q\}\\
					&\cup& (2\bbbz+1)\d\pm\{2\ep_i\mid 1\leq i\leq m\}\\
					&\cup& 2\bbbz\d\pm\{2\d_p\mid 1\leq p\leq n\}
				\end{array}$\\
				\hline
				$\stackrel{A(2m,2n)^{(4)}}{ {\hbox{\tiny$(m,n\in\bbbz^{\geq0},(m,n)\neq (0,0))$}}}$& $\begin{array}{rcl}
					\bbbz\d&\cup&  \bbbz\d\pm\{\ep_i,\d_p\mid 1\leq i\leq m,\;1\leq p\leq n\}\\
					&\cup& 2\bbbz\d\pm\{\ep_i\pm\ep_j,\d_p\pm\d_q,\d_p\pm\ep_i\mid i\neq j,p\neq q\}\\
					&\cup&(4\bbbz+2)\d\pm\{2\ep_i\mid 1\leq i\leq m\}\\
					&\cup& 4\bbbz\d\pm\{2\d_p\mid 1\leq p\leq n\}
				\end{array}$\\
				\hline
				$\stackrel{D(m+1,n)^{(2)}}{{\hbox{\tiny$(m,n\in\bbbz^{\geq 0},n\neq 0)$}}}$& $\begin{array}{rcl}
					\bbbz\d&\cup&  \bbbz\d\pm\{\ep_i,\d_p\mid 1\leq i\leq m,\;1\leq p\leq n\}\\
					&\cup& 2\bbbz\d\pm\{2\d_p,\ep_i\pm\ep_j,\d_p\pm\d_q,\d_p\pm\ep_i\mid i\neq j,p\neq q\}
				\end{array}$\\
				\hline
		\end{tabular}}
	\end{table}
	
	Here  \[(\d,R)=\{0\},~ (\ep_i,\ep_j)=\d_{i,j},~(\d_p,\d_q)=-\d_{p,q}.\] Moreover, 
	\[\dot R=\{\dot\a\in \sspan_\bbbz\{\ep_i,\d_p\mid i,p\}\mid (\dot\a+\bbbz\d)\cap R\neq \emptyset\}.\] 
	
	We set \begin{equation}  \label{decom-1}
		\begin{array}{ll}
			\hbox{\footnotesize $R_{re} := \{\alpha \in R \mid (\alpha,\alpha) \neq 0\}$}\quad(\hbox{real roots}),&
			\hbox{\footnotesize $R_{im}: =\{\a\in R\mid(\a,R)=\{0\}\}$},\\
			\hbox{\footnotesize $R_{ns} :=\{\a\in R\setminus R_{im}\mid (\alpha,\alpha) = 0\}$}\quad(\hbox{nonsingular roots}),&
			\hbox{\footnotesize $R^\times := R \setminus {R_{im}}.$}
		\end{array}
	\end{equation}
	We have 
	$\dot R^\times=\dot R\setminus\{0\}=\dot R_{ns}\cup \dot R_{re}$ in which $\dot R_{ns}$ and $\dot R_{re} $ are as in the following table:

	\begin{table}[h]\caption{Real and nonsingular roots of $\dot R$} \label{table-ns}
		% "h" means that the table will be placed "here", if we use [t], it will be placed on "top" and [b] indicates the "below". If we do not use this command, the table is floating.
		{\footnotesize \begin{tabular}{|c|l|c|}
				\hline
				$\hbox{Type}$ &\hspace{2.5cm}$\dot R_{re}$ &$\dot R_{ns}$\\
				\hline
				$\stackrel{A(2m,2n-1)^{(2)}}{{\hbox{\tiny$(m,n\in\bbbz^{\geq0},n\neq 0)$}}}$&$\begin{array}{l}
					\{\pm\ep_i,\pm\d_p,\pm2\ep_i,\pm2\d_p,
					\ep_i\pm\ep_j,\d_p\pm\d_q\\
					\mid 1\leq i\neq j\leq m,1\leq p\neq q\leq n\}\\
				\end{array}$& $\begin{array}{l}
					\{\pm \ep_i\pm\d_p\mid 1\leq i\leq m,1\leq p\leq  n\}\\
					\vspace{-2mm}\\
					\sub\dot R_{re}+\dot R_{re}
				\end{array}
				$\\
				\hline
				$\stackrel{A(2m-1,2n-1)^{(2)}}{{\hbox{\tiny$(m,n\in\bbbz^{>0},(m,n)\neq (1,1))$}}}$& $\begin{array}{l}
					\{\pm2\ep_i,\pm2\d_p,
					\ep_i\pm\ep_j,\d_p\pm\d_q\\
					\mid 1\leq i\neq j\leq m,1\leq p\neq q\leq n\}\\
				\end{array}$& $\begin{array}{l}
					\{\pm\ep_i\pm\d_p\mid 1\leq i\leq m,1\leq p\leq  n\}\\
					\vspace{-2mm}\\
					\sub\frac{1}{2}(\dot R_{re}+\dot R_{re})
				\end{array}
				$\\
				\hline
				$\stackrel{A(2m,2n)^{(4)}}{ {\hbox{\tiny$(m,n\in\bbbz^{\geq0},(m,n)\neq (0,0))$}}}$& $\begin{array}{l}
					\{\pm\ep_i,\pm\d_p,\pm2\ep_i,\pm2\d_p,
					\ep_i\pm\ep_j,\d_p\pm\d_q\\
					\mid 1\leq i\neq j\leq m,1\leq p\neq q\leq n\}\\
				\end{array}$ & $\begin{array}{l} \{\pm\ep_i\pm\d_p\mid 1\leq i\leq m,1\leq p\leq  n\}
					\\
					\vspace{-2mm}\\
					\sub\dot R_{re}+\dot R_{re}
				\end{array}
				$
				\\
				\hline
				$\stackrel{D(m+1,n)^{(2)}}{{\hbox{\tiny$(m,n\in\bbbz^{\geq 0},n\neq 0)$}}}$& $\begin{array}{l}
					\{\pm\ep_i,\pm\d_p,\pm2\d_p,
					\ep_i\pm\ep_j,\d_p\pm\d_q\\
					\mid 1\leq i\neq j\leq m,1\leq p\neq q\leq n\}\\
				\end{array}$& $ \begin{array}{l}\{\pm\ep_i\pm\d_p\mid 1\leq i\leq m,1\leq p\leq  n \}
					\\
					\vspace{-2mm}\\
					\sub\dot R_{re}+\dot R_{re}
				\end{array}
				$\\
				\hline
		\end{tabular}}
	\end{table}
	
	\begin{lem}\label{LC}
		The core $\LL_c$ of $\LL$ is a subalgebra of $\LL$ generated by $\cup_{\a\in R^\times}\LL^\a.$ Moreover, The centerless core  $\LL_{cc}$ of $\LL$ is a simple module whose module action is the natural induced action.
	\end{lem}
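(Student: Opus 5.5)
The lemma has two assertions: generation of $\LL_c$ by the non-imaginary root spaces, and simplicity of $\LL_{cc}$ as a module. I read "simple module" as: $\LL_{cc}$ is simple as a module over $\LL$, or equivalently over $\LL_c$, under the induced adjoint action. So the second claim says $\LL_{cc}$ has no nontrivial $\LL$-stable (or $\LL_c$-stable) subspaces.

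\emph{Generation.} Let $\mathcal{K}$ be the subalgebra generated by $\cup_{\a\in R^\times}\LL^\a$. Since $\LL_c$ is an ideal containing each $\LL^\a$ with $\a\ne0$, we have $\mathcal{K}\sub\LL_c$. For the reverse inclusion, it suffices to show that $\LL^{k\d}\sub\mathcal{K}$ for every $k\in\bbbz$ (with $\LL^0\cap\LL_c=\fh\op\bbbc c$), and that $c\in\mathcal{K}$. I would use that ${}^{[k]}\mathscr{G}=\sum_{\a\neq0}[{}^{[i]}\mathscr{G}^{(\a)},{}^{[k-i]}\mathscr{G}^{(-\a)}]+\sum_{\a\ne 0}{}^{[k]}\mathscr{G}^{(\a)}$ for suitable $i$. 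This holds because $\mathscr{G}$ is simple, and each ${}^{[j]}\mathscr{G}$ is an irreducible (or at least generated) module over ${}^{[0]}\mathscr{G}$, which is itself generated by its nonzero weight spaces.

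Concretely, the subspace of $\mathscr{G}$ spanned by the $\sg$-graded pieces that $\mathcal K$ reaches is $\sg$-stable. It also contains all of $\oplus_{\a\ne0}\mathscr{G}^{(\a)}$, and the ideal generated by that subspace is all of $\mathscr{G}$. The remaining piece, $\mathscr{G}^{(0)}$, is spanned by brackets $[\mathscr{G}^{(\a)},\mathscr{G}^{(-\a)}]$ with $\a\neq0$ by simplicity. One then lifts this to each $t$-degree, using that roots $\a+i\d$ and $-\a+(k-i)\d$ exist for an appropriate $i$ (read off from Table~\ref{table1}).

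The central element is obtained from $[x\ot t^p,y\ot t^{-p}]=[x,y]\ot 1+p(x,y)c$ with $(x,y)\ne0$ and $p\neq0$. Here $[x,y]\ot1$ already lies in $\mathcal{K}$, since $\fh$ is spanned by the coroots $[\LL^{\a},\LL^{-\a}]$ with $\a$ in degree $0$. This step needs a root $\a$ with both $\a+p\d$ and $-\a-p\d$ present for some $p\neq0$, which the table provides.

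\emph{Simplicity.} Let $I$ be a nonzero $\LL$-submodule of $\LL_{cc}$. Then $I$ is $\hh$-stable, because $\hh\sub\LL$ acts diagonalizably, so $I$ is a sum of its intersections with root spaces.

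\begin{itemize}
\item If $I$ meets some $\LL^{\a}$ with $\a\in R^\times$, pick $0\ne x\ot t^k\in I$ of degree $\a+k\d$. Bracketing with $\LL^{\b}$ for suitable real or nonsingular roots $\b$, and using that $\mathscr{G}$ is simple (so $\mathscr{G}$ is generated as a module by any nonzero element), we reach all of ${}^{[j]}\mathscr{G}\ot t^j$ for every $j$. The point is that from $x\ot t^k$ one reaches $\mathscr{G}$-valued elements in all twisted degrees.
\item If $I$ lies in the imaginary part $\sum_k\LL^{k\d}/\bbbc c$, take $0\ne h\ot t^k$. Its image is nonzero in $\LL_{cc}$, so $h\ne0$ when $k\neq0$; when $k=0$ we have $h\in\fh\setminus\{0\}$. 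Choose $\a\in R^\times$ with $\a(h)\ne0$, respectively a root vector $e$ of $\mathscr{G}$ with $[h,e]\ne0$. Then $[e\ot t^{j},h\ot t^k]\in I$ is a nonzero element of a real or nonsingular root space, which reduces to the previous case.
\end{itemize}

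Hence $I=\LL_{cc}$.

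\emph{Main obstacle.} The obstacle is the imaginary case for type $A(2m,2n)^{(4)}$. There $\LL^{k\d}$ is not contained in $H\ot t^k$, since $\mathscr{G}^{(0)}$ contains root vectors of $\mathscr{G}$ whose roots restrict to $0$ on $\fh$. I would handle this by picking a root vector $e$ of $\mathscr{G}$ for $H$, not merely $\fh$, with $[e,h]\ne0$; simplicity of $\mathscr{G}$ guarantees such an $e$ exists. I would then check that $[e\ot t^j,h\ot t^k]$ lands outside $\bbbc c$, which holds because its $\mathscr{G}$-component is nonzero.
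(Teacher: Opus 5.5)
The step you flag as the main obstacle is where the proposal breaks. An $H$-root vector $e\in\mathscr{G}^{\dot\b}$ is in general not $\sg$-homogeneous, so $e\ot t^j\notin\LL$. You must pass to a $\sg$-component, which lies in ${}^{[i]}\mathscr{G}^{(\pi(\dot\b))}$, and its bracket with $h\ot t^k$ then has $\fh$-weight $\pi(\dot\b)$. The condition $[e,h]\neq0$ does not force $\pi(\dot\b)\neq0$. Moreover, ``lands outside $\bbbc c$'' is not the conclusion you need: to enter your first case you must land in an $R^\times$-root space.

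Here is a concrete failure. Take $h\ot t^k=e_1=e\ot t$ and the root vector $e_{m+1,\overline{n+1}}$, whose root $\dot\ep_{m+1}-\dot\d_{n+1}$ vanishes on $\fh$. Then $[e_{m+1,\overline{n+1}},e]\neq0$, but the $\sg$-components of $e_{m+1,\overline{n+1}}$ are multiples of $e$ and $f$. Modulo $\bbbc c$ one gets $[e_{4k+1},e_1]=y_{4k+2}$ and $[e_{4k-1},e_1]=0$, so you are back in $\bigoplus_k\LL^{k\d}$. (In the other types only the $\sg$-component issue arises, since there $\pi(\dot\b)\neq0$ for $\dot\b\neq0$.)

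The missing point is that you can choose $e\in{}^{[j]}\mathscr{G}^{(\a)}$ with $\a\neq0$ and $[e,h]\neq0$. Otherwise $h$ centralizes $\bigoplus_{\a\neq0}\mathscr{G}^{(\a)}$, which generates $\mathscr{G}$ by your own generation argument, so $h\in Z(\mathscr{G})=\{0\}$. This is exactly what the paper extracts from $Z(\LL_c)=\bbbc c$ and [DSY, Lem.~3.7].

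From there the paper finishes differently from you. It uses that $R^\times$-root spaces are one-dimensional, that $\LL^\a\sub I$ and $(\a,\b)\neq0$ give $\LL^\b\sub I$, that $R^\times$ is connected under non-orthogonality, and then generation.

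Alternatively, you can drop your case split entirely, because your first case never really uses that $x$ lies in an $R^\times$-root space. Since $Z(\mathscr{G})=0$, the words $[y_1,[\cdots[y_r,x]\cdots]]$ with $r\geq1$ and $\sg$-homogeneous $y_s$ span $\mathscr{G}$. Choosing integer lifts $y_s\ot t^{i_s}$ with prescribed $\sum_s i_s$ then puts ${}^{[j]}\mathscr{G}\ot t^{j'}$ into $I$ for every $j'\equiv j\pmod{l}$, starting from any nonzero weight vector $x\ot t^k$ of $I$. The requirement $r\geq1$ is what reaches every integer degree rather than only every residue mod $l$; your sketch of the first case leaves this implicit.

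Separately, your aside ``or equivalently over $\LL_c$'' is false. For $a\neq0$, the image in $\LL_{cc}$ of $\bigoplus_{j=0}^{l-1}{}^{[j]}\mathscr{G}\ot t^j(t^l-a)\bbbc[t^{\pm l}]$ is a proper nonzero ideal of the twisted loop algebra (the quotient is $\mathscr{G}$, by evaluation). It is therefore $\LL_c$-stable but not $d$-stable. Your proof, like the paper's, uses $\hh$-stability of $I$, so it proves simplicity as an $\LL$-module, which is the intended meaning.

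Your direct generation argument is sound, and more self-contained than the paper's citation of [DSY, Pro.~3.6], once you note that $\bigoplus_{\a\neq0}\mathscr{G}^{(\a)}+\sum_{\a\neq0}[\mathscr{G}^{(\a)},\mathscr{G}^{(-\a)}]$ is already an ideal of $\mathscr{G}$. The appeal to irreducibility of ${}^{[j]}\mathscr{G}$ over ${}^{[0]}\mathscr{G}$ is then unnecessary.
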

	\pf For the first assertion, see \cite[Pro.~3.6]{DSY}.  Next assume $I$ is a nonzero ideal of $\LL_c$ with $\bbbc c\sub I.$
	We have the following:
	\begin{itemize}
		\item $\dim(\LL^\a)=1$ for $\a\in R^\times,$
		%\item if $\a\in R_{ns},$ there is $\b\in R_{re}$ with $(\a,\b)\neq 0,$
		\item if $(\a,\b)\neq 0$ and $\LL^\a\sub I,$ then, $\LL^\b\sub I,$
		\item if $\a,\b\in R^\times,$ there are $\a_1,\ldots,\a_k\in R^\times$ with $\a_1=\a,$ $\a_k=\b$ and  $(\a_i,\a_{i+1})\neq 0,$
		\item since $Z(\LL_c)=\bbbc c$, if $ x\in I\cap \LL^\sg\setminus\bbbc c$  $(\sg\in R_{im})$ (see \ref{decom-1}), there is $\a\in R^\times$ with $\{0\}\neq [x,\LL^\a]\sub I\cap \LL^{\a+\sg};$ see \cite[Lem.~3.7]{DSY}.
	\end{itemize}
	These altogether imply that $\Bigop{\a\in  R^\times}{}\LL^\a\sub I$ and so $I=\LL_c$, i.e., $\bbbc c$ is a maximal $\LL$-submodule  of $\LL_c$ and so $\LL_c/\bbbc c$ is a simple $\LL$-module.
	\qed

	\subsubsection{Parabolically induced modules} A subset $P$  of the root system $ R$ of $\LL,$ is called a {\it parabolic subset} of $ R$ if 
	%
	%A subset $P$ of the root system ${{ R}}$ of $ \LL$ is called a {\it parabolic} subset  of ${{ R}}$ if 
	$${ R}=P\cup -P\andd (P+P)\cap { R}\sub P.$$ For a parabolic subset $P$ of $ R,$ we  have the decomposition
	\[ \LL= \LL^+_{_P}\op \LL^\circ_{_P}\op \LL^-_{_P}\] where
	$$\hbox{\small $ \LL^\circ_{_P}:=\Bigop{\a\in P\cap -P}{} \LL^\a,\;  \LL^+_{_P}:=\Bigop{\a\in P\setminus-P}{} \LL^\a\andd  \LL^-_{_P}:=\Bigop{\a\in - P\setminus P}{} \LL^\a.$}$$
	Setting   $\mathfrak{p}:= \LL^\circ_{_P}\op \LL^+_{_P}$, each simple $ \LL^\circ_{_P}$-module $N$ is a module over $\mathfrak{p}$ with the trivial action of $ \LL^+.$ Then  $$\tilde N:=U( \LL)\ot_{U(\mathfrak{p})}N$$ is an $ \LL$-module; here  $U( \LL)$ and $U(\mathfrak{p})$ denote, respectively,  the universal enveloping algebras of $ \LL$ and $\mathfrak{p}.$ If the  $ \LL$-module  $\tilde N$ contains a  unique maximal submodule $Z$ {intersecting} $N$ trivially, the quotient module $${\rm Ind}^{ \LL}_{P}(N):=\tilde N/Z$$ is called  a {\it parabolically induced} module if $P\neq R$. A simple  $ \LL$-module  which is not parabolically induced is called {\it cuspidal}.
	
	Assume $\lam$ is a functional on the $\bbbr$-linear span of $R$. Then, 
	$$P_\lam:=R^+\cup R^\circ$$
	 is a parabolic subset of $R.$ We denote $\LL_{_{P_\lam}}^\circ$ and $\LL_{_{P_\lam}}^\pm$  
	by $\LL_{_{\lam}}^\circ$ and $\LL_{_{\lam}}^\pm$  respectively and ${\rm Ind}^{ \LL}_{P}(N)$ by ${\rm Ind}^{ \LL}_{\lam}(N)$.
	
	\begin{deft}
		{\rm Suppose that $\fk$ is a subalgebra of $\LL$ containing $\hh.$ A superspace  ${V}={V}_0\op {V}_1$ is called  an {\it {$\cH$}-weight  $\fk$-module} (or simply a weight $\fk$-module if there is no ambiguity) if
			\begin{itemize}
				\item [(1)] $[x,y]v=x(yv)-(-1)^{\mid x\mid\mid y\mid}y(xv)$ for all $v\in {V}$ and  $x,y\in\fk$,
				\item [(2)] $\fk_i {V}_j\sub {V}_{i+j }$ for all $i,j\in\{0,1\}$, 
				\item[(3)] ${V}=\op_{\lam\in \cH^\ast}{V}^\lam$ with
				${V}^\lam:=\{v\in {V}\mid hv=\lam(h)v\;\;(h\in\cH)\}$ for each $\lam\in \cH^\ast.$
			\end{itemize}
			In this setting, an element $\lam$ of the {\it support} of $V$, defined by
			$$\supp({V}):=\{\lam\in \fh^*\mid {V}^\lam\neq \{0\}\},$$
			is called a {\it weight} of $V$, and the corresponding ${V}^\lam$ is called a {\it weight space}. Elements of a weight space are called {\it weight vectors}. If all weight spaces are  finite dimensional, then the module ${V}$ is called a {\it finite weight module}.
		}
	\end{deft}

	The following proposition is known in the literature: 
	\begin{Pro}\label{ind}
		Suppose that $\lam$ is a functional on the $\bbbr$-linear span of $R$ and set $P:=P_{\lam}$. If $V$ is a simple  finite $\hh$-weight $ \LL$-module with $$V^{ \LL_{_\lam}^+}:=\{v\in V\mid  \LL_{_\lam}^+ v=\{0\}\}\neq \{0\},$$ then $V^{ \LL_{_\lam}^+}$ is a simple finite $\hh$-weight $ \LL_{_\lam}^\circ$-module and  $V\simeq {\rm Ind}_{\lam}^ \LL(V^{ \LL_{_\lam}^+}).$
	\end{Pro}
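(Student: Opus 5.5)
We follow the standard argument for parabolic induction. Write $N:=V^{\LL^+_\lam}$. First we check that $N$ is an $\LL^\circ_\lam$-submodule and an $\hh$-weight space. Since $\hh\subseteq\LL^\circ_\lam$ and each $\LL^\a$ is an $\hh$-weight space, $N$ is $\hh$-stable. It is therefore a sum of weight spaces $N=\bigoplus_\mu (N\cap V^\mu)$, and these are finite dimensional because $V$ is a finite weight module. For $x\in\LL^\circ_\lam$, $y\in\LL^+_\lam$ and $v\in N$ we have
\[
y(xv)=[y,x]v\pm x(yv)=[y,x]v.
\]
Here $[y,x]\in\LL^+_\lam$, because $P_\lam$ is closed, $(P+P)\cap R\subseteq P$, and a root in $P\setminus -P$ plus a root in $P\cap -P$ cannot lie in $-P$. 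Concretely, $\lam$ is positive on the first and zero on the second, so the sum has positive $\lam$-value. Hence $xv\in N$.

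Next we show $V=U(\LL^-_\lam)N$ and that $N$ is simple. Since $V$ is simple and $N\neq\{0\}$, the PBW decomposition $U(\LL)=U(\LL^-_\lam)U(\LL^\circ_\lam)U(\LL^+_\lam)$ gives $V=U(\LL)N=U(\LL^-_\lam)N$. Every weight $\mu$ of $U(\LL^-_\lam)N$ that is not a weight of $N$ has the form $\nu-\b$, with $\nu\in\supp(N)$ and $\b$ a nonzero sum of roots with $\lam(\b)>0$. Consequently $N=\bigoplus_{\mu:\,\lam\text{-level equal to that of } N} V^\mu$.

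Here we must handle the fact that $N$ may have weights of several $\lam$-levels. We deal with this using the simplicity of $V$: if $N'\subseteq N$ is a nonzero $\LL^\circ_\lam$-submodule, then $U(\LL)N'=U(\LL^-_\lam)N'$. Its intersection with $N$ must be $N'$. Indeed, the weight components of $U(\LL^-_\lam)N'$ inside $N$ come only from the $U(\LL^-_\lam)$-degree-zero part. To see this, take a vector $v\in N$ written as $\sum u_iv_i$ with $u_i$ of negative $\lam$-degree; then $v$ would lie in a strictly lower $\lam$-level than some element of $N$. To make this rigorous, we first reduce to $N$ being concentrated on a single $\lam$-level. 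We replace $N$ by $N_c:=N\cap\bigoplus_{\lam(\mu)=c}V^\mu$ for a level $c$ that is maximal among levels of $N$, or we argue directly via $\hh$-weights. Then $U(\LL)N'\cap N_c=N'$, so simplicity of $V$ forces $N'=N_c$, and afterwards $N=N_c$. The main obstacle is to justify the existence of such a maximal level, or to avoid needing it. We would first try the following: $N$ is stable under $\LL^\circ_\lam$, whose roots have $\lam$-value $0$, so each level piece $N_c$ is an $\LL^\circ_\lam$-submodule. Applying the argument above to any nonzero $N_c$ gives $V=U(\LL^-_\lam)N_c$, which forces all weights of $V$ to have level $\le c$ and forces $N_{c'}=0$ for $c'\neq c$. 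Thus no maximality assumption is needed.

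Finally, the universal property gives an $\LL$-map $\tilde N=U(\LL)\otimes_{U(\mathfrak p)}N\to V$ extending the inclusion, and this map is surjective. Its kernel $K$ meets $1\otimes N$ trivially. Any submodule $Z$ of $\tilde N$ with $Z\cap N=0$ has trivial level-$c$ component, because $\tilde N$ at level $c$ is exactly $1\otimes N$. Hence $Z+K$ also misses $N$, and its image in the simple module $V$ is proper, hence zero. So $Z\subseteq K$, which shows that $K$ is the unique maximal submodule intersecting $N$ trivially. Therefore $V\simeq\tilde N/K={\rm Ind}^\LL_\lam(N)$.
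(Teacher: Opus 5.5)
Your argument is correct. The paper gives no proof of this proposition; it only says the result is ``known in the literature,'' so there is no argument of the authors' to compare yours against. What you give is the standard triangular-decomposition argument, and it is complete:
\begin{itemize}
\item $N$ is an $\hh$-weight, $\LL^\circ_\lam$-stable subspace.
\item Each nonzero level piece $N_c$ satisfies $V=U(\LL)N_c=U(\LL^-_\lam)N_c$. This forces $N$ to live in the single top level $c$ of $V$ and gives $N=V^{(c)}$.
\item Simplicity of $N$ follows, since any nonzero submodule $N'\subseteq N$ generates $V$ and has level-$c$ part $N'$.
\item The kernel of $\tilde N\to V$ contains every submodule of $\tilde N$ that meets $1\otimes N$ trivially. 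It is therefore the unique maximal such submodule, and $V\simeq{\rm Ind}^{\LL}_{\lam}(N)$.
\end{itemize}

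When you write it up, make three points explicit.
\begin{itemize}
\item $\lam$ is defined only on $\sspan_\bbbr R$, not on $\hh^*$. So the $\lam$-level of a weight has to be measured relative to a fixed weight $\mu_0$, using $\supp(V)\subseteq\mu_0+\bbbz R$. This inclusion holds because $V$ is simple and is generated by any nonzero weight vector.
\item The step ``$Z$ has trivial level-$c$ component'' relies on two facts. First, every submodule of $\tilde N\cong U(\LL^-_\lam)\otimes N$ is an $\hh$-weight submodule, and the level-$c$ part of $\tilde N$ is $1\otimes N$. Second, the weight-preserving map $\tilde N\to V$ sends the level-$c$ part of $Z$ onto the level-$c$ part of its image.
\item $N$, and each $N_c$, is a $\bbbz_2$-graded subspace, because each $\LL^\a$ is. You need this to apply super-simplicity of $V$.
\end{itemize}

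Finally, the exploratory middle paragraph can be cut down to its last three sentences. Those sentences already carry the whole argument and need no maximal-level assumption.
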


	\subsubsection{${A(2m,2n)^{(4)}}$}\label{type 2m-2n}
	Suppose that $m\in\bbbz^{\geq 1}$ and $n\in\bbbz^{\geq 0}.$ For $1\leq p\leq 2n+1,$ set $\bar p:=p+2m+1$ and suppose $$\mathscr{G}:=\left\{
	\begin{array}{ll}
		\frak{sl}(2m+1,2n+1)& m\neq n\\
		\frak{psl}(2m+1,2n+1)& m=n.
	\end{array}
	\right.$$
	Set
	\[h_i:=e_{i,i}-e_{i+1,i+1},\; d_p:=e_{\bar p,\bar p}-e_{\overline{p+1},\overline{p+1}}, \jj:=(1-\d_{m,n})(e_{m+1,m+1}+e_{\overline{n+1},\overline{n+1}})\] for $1\leq i\leq 2m$ and $1\leq p\leq 2n$.  Then
	\[{H}:=\bbbc \jj\op\sum_{i=1}^{2m}\bbbc h_i\op\sum_{p=1}^{2n}\bbbc d_p\] is the standard Cartan subalgebra of $\mathscr{G}$ with the set of roots
	\[\dot{\mathfrak{s}}:=\{\dot\ep_i-\dot\ep_j,\dot\d_p-\dot\d_q,\pm(\dot\ep_i-\dot\d_p)\mid 1\leq i,j\leq2m+1,1\leq p,q\leq 2n+1 \}\]  where for $1\leq i\leq 2m,$  $1\leq j\leq 2m+1,$  $1\leq p\leq 2n$ and $1\leq q\leq 2n+1,$
	\[\begin{array}{llcll}
		\dot\ep_j:&{H}\longrightarrow \bbbc&\hbox{and}&\dot\d_q:&{H}\longrightarrow \bbbc\\
		&d_p\mapsto 0,\;h_i\mapsto \d_{i,j}-\d_{i+1,j},&&&h_i\mapsto 0,\;d_p\mapsto \d_{q,p}-\d_{p+1,q},\\
		&\jj\mapsto (1-\d_{m,n})\d_{j,m+1},&&&\jj\mapsto (1-\d_{m,n})\d_{q,n+1},\\
	\end{array}
	\] For an integer $s,$ define  \[sgn(s):=\left\{
	\begin{array}{ll}
		1&s>0\\
		0&s\leq 0.
	\end{array}
	\right.
	\]
	Now assume $\zeta$ is the $4$-th primitive root of unity and define the automorphism $\sg$ of $\mathscr{G}$  such that for $1\leq i\neq j\leq 2m+1$ and $1\leq p\neq  q\leq 2n+1,$
	{\begin{align*}
			e_{i,j}\mapsto&-(-1)^{i+j}e_{2m+2-j,2m+2-i},\\
			e_{\bar p,\bar q}\mapsto&-(-1)^{p+q+sgn( n+1-p)+sgn(n+1-q)+(n+1)(\d_{p,n+1}+\d_{q,n+1})}\zeta^{\d_{q,n+1}}(-\zeta)^{\d_{p,n+1}}e_{\overline{2n+2-q},\overline{2n+2-p}},\\
			e_{i,\bar p}\mapsto&-(-1)^{i+p}(-1)^{sgn(n+1-p)}(-1)^{(n+1)\d_{n+1,p}}\zeta^{\d_{n+1,p}}e_{\overline{2n+2-p},2m+2-i}, \\
			e_{\bar p,i}\mapsto&(-1)^{i+p}(-1)^{sgn(n+1-p)}(-1)^{(n+1)\d_{n+1,p}}(-\zeta)^{\d_{n+1,p}}e_{2m+2-i,\overline{2n+2-p}},\\
			h_i\mapsto& h_{2m+1-i}\quad(i\neq 2m+1),\\
			d_p\mapsto& d_{2n+1-p}\quad(p\neq 2n+1),\\
			\jj\mapsto& -\jj.
	\end{align*}}
	For $1\leq i\leq 2m+1$ and $1\leq p\leq 2n+1,$ we have
	$\sg(\dot\ep_i)=-\dot \ep_{2m+2-i}$ and $\sg(\dot\d_p)=-\dot \d_{2n+2-p}.$ 
	Moreover, the fixed point subalgebra  of $H$  under $\sg$ is 
	\[\fh=\sum_{r=1}^m\bbbc(h_r+h_{2m+1-r})+\sum_{p=1}^{n}\bbbc(d_p+d_{2n+1-p})\in \sspan_\bbbc\{e_{i,i},e_{\bar p,\bar p}\mid i\neq m+1,j\neq n+1\}.\]
	For \[
	\ep_i:=\pi(\dot\ep_i)=\dot\ep_i\mid_{\fh}\andd \d_p:=\pi(\dot\d_p)=\dot\d_p\mid_{\fh},\] the root system of $\mathscr{G}$ with respect to $\fh$ is 
	\[\dot R=\pi(\dot{\frak{s}})=\{\pm\ep_i,\pm\ep_i\pm\ep_j,\pm\d_p,\pm\d_p\pm\d_q,\pm\ep_i\pm\d_p\mid 1\leq i,j\leq m,\;1\leq p,q\leq n\}.\] Assume  $1\leq r\neq s\leq m$ and $1\leq p\neq q\leq n.$ In what follows we give the expression of ${}^{[k]}\mathscr{G}^{(\a)}$'s: 
	
	\medskip
	$\bs{{}^{[0]}\mathscr{G}^{(\a)}~~ (\a\neq 0)}:$
	{\footnotesize \[\left\{
		\begin{array}{ll}
			\bbbc(e_{r,s}-(-1)^{r+s}e_{2m+2-s,2m+2-r})& \a=\ep_r-\ep_s ,\\\\
			\bbbc(e_{r,2m+2-s}-(-1)^{r+s}e_{s,2m+2-r})& \a=\ep_r+\ep_s,\\\\
			\bbbc(e_{2m+2-r,s}-(-1)^{r+s}e_{2m+2-s,r})& \a=-\ep_r-\ep_s,\\\\
			\bbbc(e_{r,m+1}+(-1)^{r+m}e_{m+1,2m+2-r})& \a=\ep_r,\\\\
			\bbbc(e_{m+1,r}+(-1)^{r+m}e_{2m+2-r,m+1})& \a=-\ep_r,\\\\
			\bbbc(e_{\bar p,\bar q}-(-1)^{ p+ q}e_{\overline{2n+2-q},\overline{2n+2-p}})& \a=\d_p-\d_q,\\\\
			\bbbc(e_{\bar p,\overline{2n+2- q}}+(-1)^{ p+ q}e_{\overline{q},\overline{2n+2-p}})& \a=\d_p+\d_q,\\\\
			\bbbc(e_{\overline{2n+2- p},\bar q}+(-1)^{ p+ q}e_{\overline{2n+2-q},\overline{p}})& \a=-\d_p-\d_q,\\\\
		\end{array}\right.\andd
		\left\{
		\begin{array}{ll}
			\bbbc e_{\bar p,\overline{2n+2- p}}& \a=2\d_p,\\\\
			\bbbc e_{\overline{2n+2- p},\bar p}& \a=-2\d_p,\\\\
			\bbbc(e_{\bar p,m+1}+(-1)^{m+p}e_{m+1,\overline{2n+2-p}})&\a=\d_p,\\\\
			\bbbc(e_{m+1,\bar p}-(-1)^{m+p}e_{\overline{2n+2-p},m+1})&\a=-\d_p,\\\\
			\bbbc(e_{r,\overline{ p}}+(-1)^{r+p}e_{\overline{2n+2-p},2m+2-r})&\a=\ep_r-\d_p,\\\\
			\bbbc(e_{\overline{ p},r}-(-1)^{r+p}e_{2m+2-r,\overline{2n+2-p}})&\a=\d_p-\ep_r,\\\\
			\bbbc(e_{r,\overline{2n+2- p}}-(-1)^{r+p}e_{\overline{p},2m+2-r})&\a=\ep_r+\d_p,\\\\
			\bbbc(e_{2m+2-r,\bar p}+(-1)^{r+p}e_{\overline{2n+2-p},r})&\a=-\ep_r-\d_p.
		\end{array}
		\right.
		\]}
	$\bs{{}^{[2]}\mathscr{G}^{(\a)}~~ (\a\neq 0)}:$
	{\footnotesize \[\left\{
		\begin{array}{ll}
			\bbbc(e_{r,s}+(-1)^{r+s}e_{2m+2-s,2m+2-r})\quad & \a=\ep_r-\ep_s,\\\\
			\bbbc e_{r,2m+2-r}& \a=2\ep_r,\\\\
			\bbbc e_{2m+2-r,r}& \a=-2\ep_r,\\\\
			\bbbc(e_{r,2m+2-s}+(-1)^{r+s}e_{s,2m+2-r})& \a=\ep_r+\ep_s,\\\\
			\bbbc(e_{2m+2-r,s}+(-1)^{r+s}e_{2m+2-s,r})& \a=-\ep_r-\ep_s,\\\\
			\bbbc(e_{r,m+1}-(-1)^{r+m}e_{m+1,2m+2-r})& \a=\ep_r,\\\\
			\bbbc(e_{m+1,r}-(-1)^{r+m}e_{2m+2-r,m+1})& \a=-\ep_r,\\\\
			\bbbc(e_{\bar p,\bar q}+(-1)^{ p+ q}e_{\overline{2n+2-q},\overline{2n+2-p}})\quad & \a=\d_p-\d_q,\\\\
		\end{array}
		\right.\andd 
		\left\{
		\begin{array}{ll}
			\bbbc(e_{\bar p,\overline{2n+2- q}}-(-1)^{ p+ q}e_{\overline{q},\overline{2n+2-p}})\quad & \a=\d_p+\d_q,\\\\
			\bbbc(e_{\overline{2n+2- p},\bar q}-(-1)^{ p+ q}e_{\overline{2n+2-q},\overline{p}})\quad & \a=-\d_p-\d_q,\\\\
			\bbbc(e_{\bar p,m+1}-(-1)^{m+p}e_{m+1,\overline{2n+2-p}})&\a=\d_p,\\\\
			\bbbc(e_{m+1,\bar p}+(-1)^{m+p}e_{\overline{2n+2-p},m+1})&\a=-\d_p,\\\\
			\bbbc(e_{r,\overline{ p}}-(-1)^{r+p}e_{\overline{2n+2-p},2m+2-r})&\a=\ep_r-\d_p,\\\\
			\bbbc(e_{\overline{ p},r}+(-1)^{r+p}e_{2m+2-r,\overline{2n+2-p}})&\a=\d_p-\ep_r,\\\\
			\bbbc(e_{r,\overline{2n+2- p}}+(-1)^{r+p}e_{\overline{p},2m+2-r})&\a=\ep_r+\d_p,\\\\
			\bbbc(e_{2m+2-r,\bar p}-(-1)^{r+p}e_{\overline{2n+2-p},r})&\a=-\ep_r-\d_p.
		\end{array}
		\right.
		\]}
	
	\[
	\begin{array}{lll}
		\bs{{}^{[1]}\mathscr{G}^{(\a)}~~ (\a\neq 0)}:&&\bs{{}^{[3]}\mathscr{G}^{(\a)}~~ (\a\neq 0)}:\\\\
		{\footnotesize \left\{
			\begin{array}{ll}
				\bbbc(e_{r,\overline{n+1}}-(-1)^{r}e_{\overline{n+1},2m+2-r})& \a=\ep_r,\\\\
				\bbbc(e_{2m+2-r,\overline{n+1}}-(-1)^{r}e_{\overline{n+1},r})& \a=-\ep_r,\\\\
				\bbbc(e_{\bar p,\overline{n+1}}+(-1)^{p}e_{\overline{n+1},\overline{2n+2-p}})&\a=\d_p,\\\\
				\bbbc(e_{\overline{n+1},\bar p}-(-1)^{p}e_{\overline{2n+2-p},\overline{n+1}})&\a=-\d_p.
			\end{array}
			\right.}
		&&
		{\footnotesize  \left\{
			\begin{array}{ll}
				\bbbc(e_{r,\overline{n+1}}+(-1)^{r}e_{\overline{n+1},2m+2-r})& \a=\ep_r,\\\\
				\bbbc(e_{2m+2-r,\overline{n+1}}+(-1)^{r}e_{\overline{n+1},r})&\a=-\ep_r\\\\
				\bbbc(e_{\bar p,\overline{n+1}}-(-1)^{p}e_{\overline{n+1},\overline{2n+2-p}})&\a=\d_p,\\\\
				\bbbc(e_{\overline{n+1},\bar p}+(-1)^{p}e_{\overline{2n+2-p},\overline{n+1}})&\a=-\d_p.
			\end{array}
			\right.}
	\end{array}\]
	
	Also, we have 
	{\footnotesize\[{}^{[k]}\mathscr{G}^{(0)}=\left\{\begin{array}{ll}
			\bbbc(e_{m+1,\overline{n+1}}+(-1)^me_{\overline{n+1},m+1})&k=1,\\\\
			\bbbc(e_{m+1,\overline{n+1}}-(-1)^me_{\overline{n+1},m+1})&k=3,\\\\
			\displaystyle{\sum_{r=1}^m\bbbc(h_r+h_{2m+1-r})+\sum_{p=1}^{n}\bbbc(d_p+d_{2n+1-p})} & k=0,\\\\
			\displaystyle{\sum_{r=1}^m\bbbc(h_r-h_{2m+1-r})+\sum_{p=1}^{n}\bbbc(d_p-d_{2n+1-p})+\bbbc\jj} =&k=2\\
			\displaystyle{\sum_{r=1}^{m-1}\bbbc(h_r-h_{2m+1-r})+\sum_{p=1}^{n-1}\bbbc(d_p-d_{2n+1-p})+\bbbc x+\bbbc y+\bbbc(1-\d_{m,n})\ii,} &
		\end{array}
		\right.
		\]}
	where 
	\[\ii=\frac{1}{2m+1}(\sum_{i=1}^{2m+1}e_{i,i})+\frac{1}{2n+1}(\sum_{i=1}^{2n+1}e_{\bar i,\bar i}),\] and 
	\begin{align*}
		x:=&\frac{1}{2}(\frac{-1}{2}(e_{m,m}+e_{m+2,m+2})+\frac{1}{2}(e_{\bar n,\bar n}+e_{\overline{n+2},\overline{n+2}})+e_{m+1,m+1}-e_{\overline{n+1},\overline{n+1}})\\
		=&\frac{1}{2}(\frac{1}{2}(d_n-d_{n+1})-\frac{1}{2}(h_m-h_{m+1})),\\
		y:=&2(e_{m+1,m+1}+e_{\overline{n+1},\overline{n+1}})\quad({\rm mod} ~\d_{m,n}\ii)\\
		%=&2((1-\d_{m,n})\ii+\frac{1}{2m+1}\sum_{i=1}^{2m+1}(e_{m+1,m+1}-e_{i,i})+\frac{1}{2n+1}\sum_{i=1}^{2n+1}(e_{\overline{n+1},\overline{n+1}}-e_{\bar i,\bar i}))\\
		%=&2((1-\d_{m,n})\ii+\frac{1}{2m+1}(\sum_{i=1}^{m}(e_{m+1,m+1}-e_{i,i})+\sum_{i=m+2}^{2m+1}(e_{m+1,m+1}-e_{i,i}))\\
		%+&\frac{1}{2n+1}(\sum_{i=1}^{n}(e_{\overline{n+1},\overline{n+1}}-e_{\bar i,\bar i})+\sum_{i=n+2}^{2n+1}(e_{\overline{n+1},\overline{n+1}}-e_{\bar i,\bar i})))\\
		%=&\red{2}((1-\d_{m,n})\ii+\frac{1}{2m+1}(\sum_{i=1}^{m}(e_{m+1,m+1}-e_{i,i})+\sum_{i=1}^{m}(e_{m+1,m+1}-e_{2m+2-i,2m+2-i}))\\
		%+&\frac{1}{2n+1}(\sum_{i=1}^{n}(e_{\overline{n+1},\overline{n+1}}-e_{\bar i,\bar i})+\sum_{i=n+2}^{2n+1}(e_{\overline{n+1},\overline{n+1}}-e_{\overline{2n+2-i},\overline{2n+2-i}})))\\
		%=&\red{2}((1-\d_{m,n})\ii+\frac{1}{2m+1}(\sum_{i=1}^{m}\sum_{j=i}^{m}-(e_{j,j}-e_{j+1,j+1})+\sum_{i=1}^{m}\sum_{j=i}^{m}(e_{2m+1-j,2m+1-j}-e_{2m+2-j,2m+2-j}))\\
		%+&\frac{1}{2n+1}(\sum_{i=1}^{n}\sum_{j=i}^{n}-(e_{\bar j,\bar j}-e_{\overline{j+1},\overline{j+1}})+\sum_{i=1}^{n}\sum_{j=i}^{n}(e_{\overline{2n+1-j},\overline{2n+1-j}}-e_{\overline{2n+2-j},\overline{2n+2-j}})))\\
		=&2((1-\d_{m,n})\ii-\frac{1}{2m+1}(\sum_{i=1}^{m}\sum_{j=i}^{m}(h_j-h_{2m+1-j})
		-\frac{1}{2n+1}(\sum_{i=1}^{n}\sum_{j=i}^{n}(d_{j}-d_{{2n+1-j}}).
	\end{align*}
	Recall that  $\zeta$ is the forth primitive root  of unity and set 
	
	\begin{align*}
		e:={\zeta^m}(e_{m+1,\overline{n+1}}+(-1)^me_{\overline{n+1},m+1}), \quad
		f:={\zeta^m}(e_{m+1,\overline{n+1}}-(-1)^me_{\overline{n+1},m+1}),\\
	\end{align*}
	and 
	\begin{equation*}\label{exy}
		\begin{array}{ll}
			e_{4k+1}:=e\ot t^{4k+1}\in\LL^{(4k+1)\d}\sub\LL_1,&
			e_{4k-1}:=f\ot t^{4k-1}\in\LL^{(4k-1)\d}\sub\LL_1,\\
			x_{4k+2}:=x\ot t^{4k+2}\in \LL^{(4k+2)\d}\sub\LL_0,&
			y_{4k+2}:=y\ot t^{4k+2}\in \LL^{(4k+2)\d}\sub\LL_0.
		\end{array}
	\end{equation*}
	We have 
	\begin{equation*}\label{ef}
		\Bigop{k\in\bbbz}{}\LL^{(2k+1)\d}=\Bigop{k\in\bbbz}{}\bbbc e_{2k+1}\hbox{ with }\LL^{(4k+1)\d}=e\ot t^{4k+1}\andd \LL^{(4k-1)\d}=f\ot t^{4k-1}
	\end{equation*}  and 
	\[\Bigop{k\in\bbbz}{}\LL^{2k\d}=\LL^0\op\mathscr{K}\op\Bigop{k\in\bbbz}{}\bbbc x_{4k+2}\op\Bigop{k\in\bbbz}{}\bbbc y_{4k+2}\op((1-\d_{m,n})\ii)\ot t^2\bbbc[t^{\pm4}])\] where 
	\begin{comment}
		We have for $1\leq r\leq m-1,$ up to $\bbbc c$ that 
		\begin{align*}
			&[(e_{r,r+1}+e_{2m+1-r,2m+2-r})\ot t^{4k},(e_{r+1,r}-e_{2m+2-r,2m+1-r})\ot t^{2}]\\
			=&(e_{r,r}-e_{r+1,r+1}-e_{2m+1-r,2m+1-r}+e_{2m+2-r,2m+2-r})\ot t^{4k+2}\\
			=&(h_r-h_{2m+1-r})\ot t^{4k+2}
		\end{align*}

		\begin{align*}
			[e_{r,2m+2-r}\ot t^{2},e_{2m+2-r,r}\ot t^{4k-2}]=&(e_{r,r}-e_{2m+2-r,2m+2-r})\ot t^{4k}\\
			=&(e_{r,r}-e_{2m+2-r,2m+2-r})\ot t^{4k}\\
			=&\sum_{i=r}^m(h_i+h_{2m+1-i})\ot t^{4k}\\
		\end{align*}
		and if $r\neq m$
		\begin{align*}
			(h_r+h_{2m+1-r})\ot t^{4k}=&(e_{r,r}-e_{r+1,r+1}+e_{2m+1-r,2m+1-r}-e_{2m+2-r,2m+2-r})\ot t^{4k}\\
			=&(e_{r,r}-e_{2m+2-r,2m+2-r})\ot t^{4k}-(e_{r+1,r+1}-e_{2m+1-r,2m+1-r})\ot t^{4k}\\
			=&[e_{r,2m+2-r}\ot t^{2},e_{2m+2-r,r}\ot t^{4k-2}]+[e_{r+1,2m+1-r}\ot t^{2},e_{2m+1-r,r+1}\ot t^{4k-2}]
		\end{align*}
		and for $r=m,$
		\begin{align*}
			(h_r+h_{2m+1-r})\ot t^{4k}=&(e_{m,m}-e_{m+1,m+1}+e_{m+1,m+1}-e_{m+2,m+2})\ot t^{4k}\\
			=&(e_{m,m}-e_{m+2,m+2})\ot t^{4k}\\
			=&[e_{m,m+2}\ot t^{2},e_{m+2,m}\ot t^{4k-2}]
		\end{align*}
	\end{comment}
	\begin{align*}\label{k-final}
		\mathscr{K}:=&
		\sum_{\substack{k,k'\in\bbbz,k+k'\neq 0\nonumber\\
				\dot\a\in \dot R\setminus \{0,\pm\ep_i,\pm\d_p\mid i,p\}}}[\LL^{\dot\a+k\d},\LL^{-\dot\a+k'\d}]\\
		=&(\displaystyle{\sum_{r=1}^{m-1}\bbbc(h_r-h_{2m+1-r})+\sum_{p=1}^{n-1}\bbbc(d_p-d_{2n+1-p})}){{\otimes t^2\bbbc[t^{\pm4}]}}\\
		+&((\displaystyle{\sum_{r=1}^{m}\bbbc(h_r+h_{2m+1-r})+\sum_{p=1}^{n}\bbbc(d_p+d_{2n+1-p})}){{\otimes \bbbc[t^{\pm4}])\setminus \LL^0}}\nonumber
	\end{align*}  satisfy
	\[[\mathscr{K},\mathscr{K}]=\{0\}\andd [\mathscr{K},\Bigop{k\in\bbbz}{}\LL^{(2k+1)\d}]=\{0\}\quad (\hbox{modulo $\bbbc c$}).\]
	We also have, up to $\bbbc c,$ that 
	\begin{equation*}\label{bracket}
		\begin{array}{ll}
			~[e_{4k+1},e_{4k'+1}]= y_{_{4(k+k')+2}},&
			~[e_{4k-1},e_{4k'-1}]=-y_{_{4(k+k')-2}},\\
			~[x_{4k'-2},e_{4k+1}]= e_{_{4(k+k')-1}},&
			~[x_{4k'+2},e_{4k-1}]=e_{_{4(k+k')+1}},\\
			~[y_{4k'-2},e_{4k+1}]=0,&
			~[y_{4k'+2},e_{4k-1}]=0,\\
			~[e_{4k'-1},e_{4k+1}]=0,&~[y_{4k'+2},x_{4k+2}]=0,\\
			~[x_{4k'+2},x_{4k+2}]=0,&~[x_{4k'+2},x_{4k+2}]=0.
		\end{array}
	\end{equation*}
	In  fact, setting  
	\[\aa:=((1-\d_{m,n})\ii)\ot t^2\bbbc[t^{\pm2}])
	\op\mathscr{K}\]
	which is a $\bbbz$-graded abelian Lie algebra  and 
	\[\mathcal{N}:=\Bigop{k\in\bbbz}{}\bbbc x_{4k+2}\op\Bigop{k\in\bbbz}{}\bbbc y_{4k+2}
	\Bigop{k\in\bbbz}{}\bbbc e_{4k+1}\op\Bigop{k\in\bbbz}{}\bbbc e_{4k-1}\] which is a  $\bbbz$-graded non-abelian Lie superalgebra, we have 
	$\fl:=\Bigop{k\in\bbbz}{}\LL^{k\d}\simeq (\aa\op\mathcal{N})\rtimes \hh$.
	We can identify $\cN$ with the quadratic Lie superalgebra $\cQ,$ see \S~\ref{quadratic}.
	
	\subsubsection{Zero-level finite weight modules over $\displaystyle{\bigoplus_{k\in\bbbz}\LL^{k\d}}$} \label{zero-level-M} Assume $\LL$ is a twisted affine Lie superalgebra whether  of type $X=A(2m,2n)^{(4)}$ or $X\not=A(2m,2n)^{(4)}.$
	Set
	\[\mathscr{L}:=\Bigop{0\neq k\in\bbbz}{}\LL^{k\d}\andd \fl:=\Bigop{k\in\bbbz}{}\LL^{k\d}.\] Then, $\mathscr{L}$ is a subalgebra of the centerless core $\LL_{cc}$ of $\LL$ and 
	\[\fl=\mathscr{L}\rtimes\hh.\]
	For 
	a functional $\lam$ on $\fh\op\bbbc d$ and a finite weight $\bbbz$-graded $\mathscr{L}$-module $\Omega,$   set $\Omega(\lam):=\Omega$ and define 
	\begin{align*}
		\cdot:&\fl\times \Omega(\lam)\longrightarrow \Omega(\lam)\\
		&(x,v)\mapsto\left\{
		\begin{array}{ll}
			xv& \hbox{if $x\in\mathscr{L}$}\\
			(\lam(h+rd)+rk)v& \hbox{if $v\in \Omega^k$ {{$(k\in\bbbz)$}},  $x=h+sc+rd$ ($h\in\fh,~r,s\in\bbbc$).}
		\end{array}
		\right.
	\end{align*}
	Consider $\lam$ as a functional on $\hh$ with $\lam(c)=0.$ Then, $\Omega(\lam)$ is an $\hh$-weight  $\fl$-module of level zero 
	with \[\supp(\Omega(\lam))=\lam+{{\supp_\bbbz(\Omega)\d}}\] and 
	for  $\lam+k\d\in\supp(\Omega(\lam)),$ we have  $\Omega(\lam)^{\lam+k\d}=\Omega^k$ for $k\in\bbbz.$ 
	Moreover, $\Omega(\lambda)$ is a finite (resp. simple) $\hh$-weight  $\fl$-module of level zero if and only if $\Omega$ is a  finite weight {{$\bbbz$-graded}} (resp. $\bbbz$-graded-simple) $\mathscr{L}$-module.
	
	Conversely, assume $N$ is a zero-level simple  finite $\hh$-weight  $\mathfrak{L}$-module. Fix $\lam\in \supp(N).$ Since the level of $N$ is zero, $\lam(c)=0$ and so, we identify $\lam$ with a functional on $\fh\op\bbbc d$. Also, as $N$ is simple, $\supp(N)\sub \lam+\bbbz\d.$ Then, $\Omega:= N$ is a $\bbbz$-graded $\mathscr{L}$-module with 
	\[\Omega^k=N^{\lam+k\d} \quad(k\in\bbbz).\] Moreover, $\Omega$ is a finite weight $\bbbz$-graded simple $\mathscr{L}$-module and $N=\Omega(\lam).$
	
	\section{Zero-level integrable finite weight modules}
	Suppose that $\LL=\LL_0\oplus\LL_1$ $(\bbbz_2=\{1,2\})$  is a twisted  affine Lie superalgebra of type $X= A(2m,2n-1)^{(2)}, A(2m-1,2n-1)^{(2)}$ $((m,n)\neq (1,1)), A(2m,2n)^{(4)}$ and $D(m+1,n)^{(2)}$ with standard Cartan subalgebra  $\hh$. Recall Table~\ref{table1} for the root system $R=R_0\cup R_1.$		
	As we see in this table, there is a positive integer {{$p_*=2,4$}} such that 
	\begin{equation}\label{p}
		R+p_*\bbbz\d\sub R.
	\end{equation}

	Suppose that $V$ is an $\LL$-module. We denote by  ${R}^{ln}({V})$,   the set of all real roots $\a\in R$  for which $0\neq x\in\LL^\a$ acts on ${V}$ locally nilpotently.
	It is a well-known fact that for a  finite weight $\LL$-module $V=\op_{\lam\in \cH^*}V^\lam,$			
	\begin{equation}\label{in-supp 1}
		\parbox{6in}{if $\mu\in\supp(V)$ and $\pm\a\in R^{ln}(V)$, then $2(\mu,\a)/(\a,\a)\in\bbbz$; moreover if $2(\mu,\a)/(\a,\a)\in\bbbz^{>0},$ then $\mu-\a\in \supp(V).$}
	\end{equation}
	
	\begin{deft}
		An $\LL$-module $V$ is called {\it integrable} if
		\begin{itemize}
			\item[(1)] 
			$V$ is a finite $\cH$-weight $\LL$-module,
			\item[(2)] 
			$R_{re}=R^{ln}(V).$
		\end{itemize}
		It is called {\it admissible} if the weight multiplicities are uniformly bounded.
	\end{deft}

	The classifications of simple finite weight  modules over an affine Lie superalgebra strongly depends on the fact that if the level is zero or not; e.g., although, it has been proven in \cite{E1}, \cite{Y1} that there is no nonzero-level simple  integrable finite weight $\LL$-module for almost all affine Lie superalgebras with nonzero odd part, using Lemma~\ref{LC}, we get that  the centerless core of $\LL$ is a simple  integrable finite weight $\LL$-modules of level-zero. 
	
	\medskip
	
	In this section, we  characterize simple integrable finite weight $\LL$-modules.  We know form \cite[Pro. 2.5]{Y2} that for the root system $R_0$ of $\LL_0$ with respect to $\hh$,  $R_0\cap R_{re}=\{\a\in R_{re}\mid 2\a\not\in R_{re}\}$.
	Set $\dot S:=\dot{R}\cap R_0$, that is 
	
	\bigskip
	
	\begin{table}[h]\caption{Finite root system $\dot{S}$} \label{table11}
		% "h" means that the table will be placed "here", if we use [t], it will be placed on "top" and [b] indicates the "below". If we do not use this command, the table is floating.
		{\footnotesize \begin{tabular}{|c|l|}
				\hline
				$\hbox{Type}$ &\hspace{3.25cm}$\dot{S}$ \\
				\hline
				$\stackrel{A(2m,2n-1)^{(2)}}{{\hbox{\tiny$(m,n\in\bbbz^{\geq0},n\neq 0)$}}}$&$
				\pm\{0,\ep_i,\ep_i\pm\ep_j,2\d_p, \d_p\pm\d_q\mid 1\leq i\neq j\leq m,1\leq p\neq q\leq n\}$\\
				\hline
				$\stackrel{A(2m-1,2n-1)^{(2)}}{{\hbox{\tiny$(m,n\in\bbbz^{>0},(m,n)\neq (1,1))$}}}$& $
				\pm\{0,  \ep_i\pm\ep_j, 2\d_p, \d_p\pm\d_q\mid 1\leq i\neq j\leq m,1\leq p\neq q\leq n\}$\\
				\hline
				$\stackrel{A(2m,2n)^{(4)}}{ {\hbox{\tiny$(m,n\in\bbbz^{\geq0},(m,n)\neq (0,0))$}}}$& $
				\pm\{0, \e_i,  \ep_i\pm\ep_j,2\d_p,\d_p\pm\d_q\mid 1\leq i\neq j\leq m,1\leq p\neq q\leq n\}	
				$\\
				\hline
				$\stackrel{D(m+1,n)^{(2)}}{{\hbox{\tiny$(m,n\in\bbbz^{\geq 0},l\neq 0)$}}}$& $
				\pm\{0,\ep_i, \ep_i\pm\ep_j,2\d_p,\d_p\pm\d_q\mid 1\leq i\neq j\leq m,1\leq p\neq q\leq n\}
				$\\
				\hline
		\end{tabular}}
	\end{table}

	We point it out that {{$\fk:=\op_{\dot\a\in \dot S}\LL^{\dot\a}$}} is a reductive Lie algebra. We fix bases $\dot \D$ and $\dot \Pi$ respectively for $\dot S$ and $\dot R_{re}$ as in the following table:
	
	{\footnotesize 
		\begin{table}[h]
			\begin{tabular}{|c|l|l|}
				\hline
				$\hbox{Type}$ &\hspace{3.25cm}$\dot{\D}$ & \hspace{3cm} $\dot \Pi$ \\
				\hline
				$\stackrel{A(2m,2n-1)^{(2)}}{{\hbox{\tiny$(m,n\in\bbbz^{\geq0},l\neq 0)$}}}$&
				$\{\ep_i-\ep_{i+1},\ep_{m}\}_{i=1}^{m-1}\cup\{\d_p-\d_{p+1},2\d_n\}_{p=1}^{n-1}$&$\{\ep_i-\ep_{i+1},\ep_{m}\}_{i=1}^{m-1}\cup\{\d_p-\d_{p+1},\d_n\}_{p=1}^{n-1}$\\
				\hline
				$\stackrel{A(2m-1,2n-1)^{(2)}}{{\hbox{\tiny$(m,n\in\bbbz^{>0},(m,n)\neq (1,1))$}}}$& $\{\ep_i-\ep_{i+1},\ep_{m-1}+\ep_{m}\}_{i=1}^{m-1}\cup\{\d_p-\d_{p+1},2\d_n\}_{p=1}^{n-1}$&
				$\{\ep_i-\ep_{i+1},2\ep_{m}\}_{i=1}^{m-1}\cup\{\d_p-\d_{p+1},2\d_n\}_{p=1}^{n-1}$\\
				\hline
				$\stackrel{A(2m,2n)^{(4)}}{ {\hbox{\tiny$(m,n\in\bbbz^{\geq0},(m,n)\neq (0,0))$}}}$&
				$\{\ep_i-\ep_{i+1},\ep_{m}\}_{i=1}^{m-1}\cup\{\d_p-\d_{p+1},2\d_n\}_{p=1}^{n-1}$&$\{\ep_i-\ep_{i+1},\ep_{m}\}_{i=1}^{m-1}\cup\{\d_p-\d_{p+1},\d_n\}_{p=1}^{n-1}$\\
				\hline
				$\stackrel{D(m+1,n)^{(2)}}{{\hbox{\tiny$(m,n\in\bbbz^{\geq 0},n\neq 0)$}}}$& 
				$\{\ep_i-\ep_{i+1},\ep_{m}\}_{i=1}^{m-1}\cup\{\d_p-\d_{p+1},2\d_n\}_{p=1}^{n-1}$&$\{\ep_i-\ep_{i+1},\ep_{m}\}_{i=1}^{m-1}\cup\{\d_p-\d_{p+1},\d_n\}_{p=1}^{n-1}$\\
				\hline
			\end{tabular}
		\end{table}
	}

	Using the same argument as in \cite[Lem. 2.3(a)]{Y1}, we have the following lemma:
	\begin{lem}\label{22}
		Recall  the integer $p_*$ from (\ref{p}), the finite root system $\dot{S}$ from Table~\ref{table11} and its base $\dot\D$ as above. Set
		\[\dot S^+(\dot\Delta):=(\Bigop{\b\in \dot\D}{}\bbbz^{\geq 0}\b)\cap (\dot S\setminus \{0\}).\]
		Suppose that $V$ is an integrable finite weight $\LL$-module. Then, there is a weight $\lam$ of $V$ with  \[\lam+\dot\a+rp_*\d\not \in \supp(V)\quad(r\in\bbbz, \dot\a\in \dot S^+(\dot\Delta)).\]
	\end{lem}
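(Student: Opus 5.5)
Our plan is to adapt the argument of \cite[Lem. 2.3(a)]{Y1}. It relies on the finite, $\d$-invariant weight combinatorics coming from integrability with respect to the real roots in $\dot S+p_*\bbbz\d$. First we use (\ref{p}) to note that for every $\dot\a\in\dot S\setminus\{0\}$ and $r\in\bbbz$, the element $\dot\a+rp_*\d$ lies in $R$. Moreover, it is a real root of $R_0$, since $\dot S=\dot R\cap R_0$ and the elements of $\dot S\setminus\{0\}$ are non-isotropic. Hence all these roots belong to $R^{ln}(V)$, and (\ref{in-supp 1}) applies to each of them.

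\textbf{Step 1: a linear functional separating $\dot S^+(\dot\D)$.} We pick $\xi$ in the real span of $\dot\D$, extended by $\xi(\d)=0$, with $(\xi,\b)>0$ for every $\b\in\dot\D$. For instance, $\xi$ can be the sum of the fundamental coweights of the finite root system $\dot S$ with base $\dot\D$. Then $(\xi,\dot\a)>0$ for all $\dot\a\in\dot S^+(\dot\D)$, and this value does not change when $\dot\a$ is replaced by $\dot\a+rp_*\d$.

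\textbf{Step 2: boundedness of $\xi$ on the support.} Fix $\mu\in\supp(V)$. For each real $\g=\dot\a+rp_*\d$ with $\dot\a\in\dot S\setminus\{0\}$, the reflection property coming from (\ref{in-supp 1}) shows that the $\g$-string through $\mu$ in $\supp(V)$ is symmetric about the reflection of $\mu$. In particular $\mu\pm\g\in\supp(V)$ forces the strings to be finite. Since $(\g,\g)=(\dot\a,\dot\a)$ and $(\mu,\g)=(\mu,\dot\a)$ (level zero, $(\d,R)=0$), the length of each string depends only on $\dot\a$ and on the $\dot{\ }$-part of the weight. We then consider the set $\supp(V)\cap(\mu+\sspan_\bbbz(\dot S)+\bbbz\d)$. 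The projections of these weights to $\sspan(\dot S)$ are integral for the finite Weyl group $W(\dot S)$, and they are stable under $W(\dot S)$ by the reflection property. The aim is to show that the values $(\xi,\nu)$, for $\nu$ in this set, are bounded above. We would do this by showing that only finitely many $W(\dot S)$-orbits of $\dot{\ }$-parts can occur among weights reachable from $\mu$ by adding elements $\dot\a+rp_*\d$. Here we use that any upward step $\nu\mapsto\nu+\dot\a+rp_*\d$ inside the support is reversible by the string property, so the $\dot{\ }$-parts stay in the finite set of dominant integral weights below a fixed bound. This is the step we expect to be the main obstacle. To carry it out we would mimic \cite{Y1}: pick $\mu$ in the support and use $\mathfrak{sl}_2$-theory for each $\dot\a+rp_*\d$ to see that $(\nu,\dot\a^\vee)$ cannot grow, since otherwise a reflection would produce weights that contradict finiteness of weight spaces along the lines $\nu+\bbbz\dot\a$.

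\textbf{Step 3: conclusion.} Once $(\xi,\cdot)$ is bounded above on the relevant part of $\supp(V)$ and takes values in a discrete set (the projections are integral), we choose a weight $\lam$ that maximizes $(\xi,\lam)$. Suppose $\lam+\dot\a+rp_*\d\in\supp(V)$ for some $\dot\a\in\dot S^+(\dot\D)$. Then $(\xi,\lam+\dot\a+rp_*\d)=(\xi,\lam)+(\xi,\dot\a)>(\xi,\lam)$, which contradicts maximality. Hence this $\lam$ satisfies the claim.
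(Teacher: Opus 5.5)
\textbf{There is a genuine gap: Step~2 is the whole content of the lemma, and it is not proved.}

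Your Steps~1 and~3 are fine. With $\xi$ the sum of the fundamental coweights, $(\xi,\cdot)$ is integer-valued on $\sspan_\bbbz\dot S$. So once it is bounded above on the relevant part of $\supp(V)$, a maximiser exists and has the required property. The paper gives no argument of its own beyond the pointer to \cite[Lem.~2.3(a)]{Y1}, so everything rests on Step~2, and the reasons you sketch there do not yield a bound:
\begin{itemize}
\item Reversibility of upward steps, integrality and $W(\dot S)$-stability all hold for the full coset $\mu+\sspan_\bbbz\dot S+\bbbz\d$. That coset admits no weight $\lam$ as in the lemma, so these properties cannot force one.
\item A reflection only produces the other end of a \emph{finite} string. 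Strings of unbounded but finite length contradict nothing.
\item The phrase ``the finite set of dominant integral weights below a fixed bound'' presupposes the bound you are trying to prove.
\end{itemize}
What rules out the full coset is that every real-root string $(\nu+\bbbz\gamma)\cap\supp(V)$ is finite. Indeed $\bigoplus_j V^{\nu+j\gamma}$ is a sum of finite-dimensional $\mathfrak{sl}_2$-modules, each meeting the finite-dimensional weight space of the string on which $\langle\cdot,\gamma^\vee\rangle\in\{0,1\}$. So you must produce infinitely many weights on a \emph{single} string. Your closing appeal to ``lines $\nu+\bbbz\dot\a$'' points at the right contradiction, but the sketch has no mechanism for reaching it.

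\textbf{The missing idea: freedom in the $\d$-direction at level zero.} Write $\nu=\bar\nu+\nu(d)\d$ with $\bar\nu\in\fh^*$. The quantity $2(\nu,\dot\a+rp_*\d)/(\dot\a,\dot\a)=2(\bar\nu,\dot\a)/(\dot\a,\dot\a)=:\langle\bar\nu,\dot\a^\vee\rangle$ does not depend on $r$. Hence (\ref{in-supp 1}) gives $\nu-\dot\a-rp_*\d\in\supp(V)$ for \emph{every} $r\in\bbbz$, as soon as $\nu\in\supp(V)$ and $\langle\bar\nu,\dot\a^\vee\rangle>0$. A repaired Step~2 then runs as follows.
\begin{itemize}
\item Let $P':=\supp(V)\cap(\mu+\sspan_\bbbz\dot S+p_*\bbbz\d)$ and suppose $(\xi,\cdot)$ is unbounded on $P'$.
\item By $W(\dot S)$-stability and the usual saturation argument for weights of finite-dimensional modules, there is an irreducible component $\dot S_1$ of $\dot S$ and some $\bar\eta_0$ such that every $\bar\eta\in\bar\eta_0+\sspan_\bbbz\dot S_1$ is the $\fh^*$-part of a weight in $P'$.
\item For such $\bar\eta$, pick $\dot\a\in\dot S_1$ with $\langle\bar\eta,\dot\a^\vee\rangle\geq 0$. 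Apply the remark above to a weight of $P'$ with $\fh^*$-part $\bar\eta+\dot\a$. This gives $\bar\eta+(\mu(d)+p_*\bbbz)\d\subseteq\supp(V)$.
\item Hence the $\dot\a$-string through $\bar\eta_0+\mu(d)\d$ is infinite, which is a contradiction.
\end{itemize}

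\textbf{State the level-zero hypothesis.} Both this argument and your own claim $(\mu,\gamma)=(\mu,\dot\a)$ use level zero: $(\d,R)=\{0\}$ alone does not give $(\mu,\d)=0$. The statement as written has no level hypothesis, but one is needed; for affine Lie algebras, integrable highest weight modules of nonzero level violate the conclusion. Level zero does hold where the lemma is used, in Proposition~\ref{main-mm}, so add it to the statement explicitly.
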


	\medskip
	Keep the same notation as above. We know that if $X\neq A(1,2{{n}}-1)^{(2)},$ we have $$\sspan_\bbbr R=\sspan_\bbbr R_0=\sspan_\bbbr\dot S\op \bbbr \d$$ while for $X {{=}} A(1,2{{n}}-1)^{(2)},$ we have \[\sspan_\bbbr R=\sspan_\bbbr R_0\op \bbbr \ep_1=\sspan_\bbbr\dot S\op \bbbr \ep_1\op \bbbr \d.\] Fix a functional  
	$\bs{g}:\sspan_\bbbr \dot\Pi(=\sspan_\bbbr \dot R)\longrightarrow \bbbr$
	such that 
	\begin{itemize}
		\item $\bs{g}(\dot\a)>0\; \hbox{for all } \dot\a\in \dot\Pi,$ in particular $\bs{g}(\d_p)>0$ for all $1\leq p\leq n$ and $\bs{g}(\e_i)>0$ for all $1\leq i\leq m$ if $X\not =A(1,2n-1)^{(2)},$
		\item   ${\rm min}\{\bs{g}(\ep_i)\mid 1\leq i\leq m\})>{\rm max}\{\bs{g}(\d_p)\mid 1\leq p\leq n\},$ for $m\neq 0$ and  $X\not =A(1,2n-1)^{(2)},$ 
		\item  $\bs{g}(\ep_1)>{\rm max}\{|\bs{g}(\d_p)|\mid 1\leq p\leq n\},$ for $X=A(1,2n-1)^{(2)}.$ 
	\end{itemize}
	We extend $\bs{g}$ to get a functional
	\[\bs{f}:\sspan_\bbbr R\longrightarrow \bbbr\] with 
	\begin{equation}\label{properties-functional}
		\begin{array}{l}
			\bullet ~~\bs{f}(\d)=0,\\
			\bullet ~~\hbox{for } \dot\a\in \dot R_{re}, \bs{f}(\dot\a)>0 \hbox{ if } \dot\a\in \sspan_{\bbbz^{\geq 0}}\dot \Pi, \hbox{ in particular if } \dot{\a}\in \dot S^+(\dot\Delta)\\
			\bullet~~\bs{f}(\ep_i\pm\d_p)>0  \quad(1\leq i\leq m,~1\leq p\leq n).
		\end{array}
	\end{equation}
	
	\begin{Pro}\label{main-mm}
		Recall 	$\dot{S}=\dot{R}\cap R_0$ and $\bs{f}$ as above as well as  $p_*$ as in (\ref{p}). Set 
		\[T:=R\cap \{\dot\a+r\d\mid \dot \a\in \dot R,0\leq r\leq p_*-1\}.\]
		Assume that $V$ is an integrable finite weight $\LL$-module of level zero. Then, there exists a nonzero weight vector $v$ such that $\LL^{\b+p_*r\d}v=\{0\}$ for all $r\in\bbbz$ and $\b\in T$ with $\bs{f}(\b)>0$. 
	\end{Pro}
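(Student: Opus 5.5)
We want a nonzero weight vector $v$ killed by every root space $\LL^{\b+p_*r\d}$ with $\b\in T$, $\bs{f}(\b)>0$ and $r\in\bbbz$. Since $R+p_*\bbbz\d\sub R$ and every root is of the form $\dot\a+k\d$ with $k\equiv r$ modulo $p_*$ for some $0\le r\le p_*-1$, this set of roots is exactly $R^+_{\bs f}:=\{\a\in R\mid \bs{f}(\a)>0\}$, because $\bs{f}(\d)=0$. So the goal is $V^{\LL^+_{\bs f}}\neq\{0\}$, where $\LL^+_{\bs f}=\Bigop{\bs f(\a)>0}{}\LL^\a$. The plan is to find a weight $\mu$ of $V$ such that $\mu+\a\notin\supp(V)$ for all $\a\in R$ with $\bs f(\a)>0$. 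Any nonzero vector of $V^\mu$ then works.

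\textbf{Step 1: fix a starting weight.} Start from the weight $\lam$ given by Lemma~\ref{22}: $\lam+\dot\a+rp_*\d\notin\supp(V)$ for all $r\in\bbbz$ and $\dot\a\in\dot S^+(\dot\D)$. The useful feature is that this uses the whole $\d$-string, so it is uniform in $r$.

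\textbf{Step 2: maximize $\bs f$ on the relevant part of the support.} Consider the set
\[
\Lambda:=\supp(V)\cap\bigl(\lam+\sspan_{\bbbz}R\bigr)\cap\{\mu\mid \bs f(\mu)\ge \bs f(\lam)\}.
\]
We want to show that $\bs f$ attains a maximum on $\Lambda$, modulo $\bbbz\d$. Integrability supplies the tool: by (\ref{in-supp 1}), for each real root $\a$ the values $2(\mu,\a)/(\a,\a)$ are integers. Moreover, $\supp(V)$ is stable under the reflections $s_\a$ for $\a\in R_{re}$, since $V$ is a direct sum of finite dimensional modules for each real $\mathfrak{sl}_2$ (or $\mathfrak{osp}(1|2)$). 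Hence the $\dot R$-part of $\mu-\lam$ ranges over a set of the form $W$-orbit intersected with $\lam+\bbbz\dot R$.

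Now use Lemma~\ref{22}. Because $\lam$ is dominant-like with respect to $\dot\D$ uniformly in $r$, the $\fk$-module (reductive, $\fk=\op_{\dot S}\LL^{\dot\a}$) structure together with the loop copies $\LL^{\dot\a+rp_*\d}$ shows that every weight $\mu\in\lam+\sspan_\bbbz R$ in $\supp(V)$ satisfies $\dot\mu\le\dot\lam$ in the $\dot S$-order. This should follow from Weyl-group invariance of $\supp(V)$ under $W_{\dot S}$: each weight is $W$-conjugate to a dominant one, and a dominant weight cannot exceed $\dot\lam$ without producing some $\lam+\dot\a+rp_*\d$ in the support. Since there are finitely many dominant integral weights below a given one, the $\dot R$-parts of $\Lambda$ form a finite set. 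So $\bs f$ takes finitely many values on $\Lambda$, and we can pick $\mu\in\Lambda$ with $\bs f(\mu)$ maximal.

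\textbf{Step 3: conclude.} If $\bs f(\a)>0$ and $\mu+\a\in\supp(V)$, then $\mu+\a\in\Lambda$ and $\bs f(\mu+\a)>\bs f(\mu)$, contradicting maximality. So $\LL^\a V^\mu=\{0\}$ for every such $\a$, and any nonzero $v\in V^\mu$ satisfies the statement.

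The main obstacle is the finiteness in Step 2, in two places. First, roots in $R_1$ that are not in $R_0$, namely nonsingular roots $\pm\ep_i\pm\d_p$ and the odd copies $\pm\d_p+(2k+1)\d$ and so on, are not covered by $\dot S$. This is where the choice $\bs f(\ep_i\pm\d_p)>0$ and $\min\bs g(\ep_i)>\max\bs g(\d_p)$ should enter: it gives a bound of the form $\bs f(\dot\nu)\le C$ on $\dot S$-dominant weights. That bound rests only on the $\dot S$-order controlled by Lemma~\ref{22}, using $\sspan_\bbbr R=\sspan_\bbbr\dot S\op\bbbr\d$. Second, the exceptional type $A(1,2n-1)^{(2)}$, where $\ep_1$ lies outside $\sspan\dot S$, needs separate treatment: there one bounds the $\ep_1$-coordinate using integrability of the real root $2\ep_1+\d$ (the $\pm2\ep_1$ strings), i.e.\ reflection invariance forces $|(\mu,\ep_1)|$ to be bounded on a $\d$-string. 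Combined with $\bs g(\ep_1)>\max|\bs g(\d_p)|$, this gives the needed bound. I would first try to prove the uniform upper bound on $\bs f$ directly via reflections $s_\a$ with $\a\in\dot S$ real, and handle this exceptional type last.
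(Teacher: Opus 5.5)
Your reduction (the roots $\b+p_*r\d$ with $\b\in T$, $\bs{f}(\b)>0$ are exactly the $\a\in R$ with $\bs{f}(\a)>0$) and your Step~3 are fine. But Step~2 carries the whole content of the proposition, and it is not proved.

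\textbf{The gap in Step~2.} The claim that every weight of $V$ in $\lam+\sspan_\bbbz R$ lies below $\lam$ does not follow from Lemma~\ref{22} plus reflection invariance of $\supp(V)$. Lemma~\ref{22} is a purely local statement. It only says that the strings $\lam+\dot\a+p_*\bbbz\d$, with $\dot\a\in\dot S^+(\dot\D)$, miss the support. It gives no information about weights not joined to $\lam$ by such strings, for instance weights reached from $\lam$ by:
\begin{itemize}
\item odd roots such as $\pm\ep_i\pm\d_p+k\d$ or $\pm\d_p+(2k+1)\d$;
\item roots $\dot\a+r\d$ with $r\notin p_*\bbbz$;
\item going down and then up.
\end{itemize}
The proposition does not assume $V$ simple. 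For $V=V_1\oplus V_2$ the weight $\lam$ may come from $V_1$. Lemma~\ref{22} then constrains $\supp(V_2)$ only at the points $\lam+\dot\a+rp_*\d$, so nothing forces the weights of $V_2$ in the same coset to lie below $\lam$. Even for simple $V$, your sentence ``a dominant weight cannot exceed $\dot\lam$ without producing some $\lam+\dot\a+rp_*\d$ in the support'' is a highest-weight property of $V$ relative to $\lam$. That is essentially what is to be proved; for simple $V$, boundedness of $\bs{f}$ on $\supp(V)$ is rather a consequence of Theorem~\ref{MMain}. Also, the $\dot S$-order only compares weights within one coset of $\sspan_\bbbz\dot S+\bbbz\d$, and $\ep_i-\d_p\notin\sspan_\bbbz\dot S$. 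Your final paragraph only says what ``should'' give the bound, so the existence of a maximum of $\bs{f}$ on $\Lambda$ remains unjustified.

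\textbf{The missing idea (the paper's route).} The paper avoids any global control of $\supp(V)$ by working with vectors and the finite set $T$.
\begin{enumerate}
\item Let $W$ be the space of vectors killed by all $\LL^{\dot\a+sp_*\d}$ with $\dot\a\in\dot S$, $\bs{f}(\dot\a)>0$, $s\in\bbbz$. It is nonzero by Lemma~\ref{22}.
\item For a weight vector $v\in W$, let $\aa_v$ be the finite set of $\b\in T$ with $\bs{f}(\b)>0$ and $\LL^{\b+sp_*\d}v\neq\{0\}$ for some $s$.
\item Choose $v_0$ with $|\aa_{v_0}|$ minimal, and $\b_*\in\aa_{v_0}$ with $\bs{f}(\b_*)$ maximal.
\item Any bracket of $\LL^{\b_*+sp_*\d}$ with a root space of positive $\bs{f}$-value lands in a root space of $\bs{f}$-value $>\bs{f}(\b_*)$. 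By maximality such a space kills $v_0$. Hence any nonzero $v\in\LL^{\b_*+sp_*\d}v_0$ again lies in $W$, with $\aa_v\subseteq\aa_{v_0}$, so $\aa_v=\aa_{v_0}$ by minimality.
\item Iterating gives weights $\mu_0+r\b_*+(\text{multiple of }p_*)\d$ in $\supp(V)$ for all $r\geq 0$.
\item $\b_*$ is real (the paper quotes a known lemma), and the level is zero, so $\d$-shifts do not change $(\mu,\b_*)$. Applying (\ref{in-supp 1}) to the real roots $\b_*+p_*k\d$ then produces an unbounded $(\b_*+p_*s\d)$-string in $\supp(V)$ for one fixed $s$, contradicting integrability.
\end{enumerate}
Thus $\aa_{v_0}=\emptyset$ and $v_0$ is the required vector.
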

	\pf
	Set
	\begin{align*}
		W:=\{v\in V\mid \LL^{\dot{\a}+sp_*\d}v=\{0\}\quad (\dot{\a}\in \dot S, s\in \bbbz, \bs{f}(\dot{\a})>0)\}.
	\end{align*}
	By Lemma \ref{22}, we know that $W$ is a nonzero $\cH$-module. For each nonzero weight vector $v\in W,$ define the finite set 
	\[\aa_v:=\{\b\in T\mid  \bs{f}(\b)>0~ \&~ \exists s\in \bbbz\hbox{ {s.t.} }~\LL^{\b+sp_*\d}v\neq \{0\}\} .\] 
	Suppose that $v_0$ is a nonzero weight vector in $W$ such that  $|\cA_{v_0}|$,  the cardinality $\cA_{v_0},$ 
	is minimum. We claim that $\cA_{v_0}=\emptyset$. We show this through the following two steps:
	
	\noindent{\bf Step~1.}
	If $\aa_{v_0}\neq\emptyset,$  ${{\b_*}}\in\cA_{v_0}$ with 	$\bs{f}({{\b_*}})=
	{\text{\rm{max}}}\{\bs{f}(\b)\mid \b\in \aa_{v_0}\},$ $0\neq v\in \LL^{{{\b_*}}+sp_*\d}v_0$ for some $s\in\bbbz,$ then $v\in W$ and $\aa_v=\aa_{v_0}:$ 
	To show that $v\in W,$ suppose that $r\in\bbbz$ and $\dot{\a}\in \dot S$ with $\bs{f}(\dot{\a})>0$,  then, we have
	\begin{equation}\label{mm-11}
		\LL^{\dot\a+rp_*\d}v\subseteq \LL^{\dot\a+rp_*\d}\LL^{{{\b_*}}+sp_*\d}v_0\subseteq\LL^{{{\b_*}}+sp_*\d}\LL^{\dot\a+rp_*\d}v_0+\LL^{{{\b_*}}+\dot{\a}+(s+r)p_*\d}v_0\subseteq \LL^{{{\b_*}}+\dot{\a}+(s+r)p_*\d}v_0.
	\end{equation}
	If $\LL^{\b_*+\dot{\a}+(s+r)p_*\d}v_0\neq\{0\}$, then		
	$\b_*+\dot\a\in T$. So $\b_*+\dot\a\in\cA_{v_0}$.
	But we have \[\bs{f}(\b_*+\dot\a)=\bs{f}(\b_*)+\bs{f}(\dot{\a})>\bs{f}(\b_*)\]  contradicting  the choice of $\b_*$.
	Therefore,  
	$\LL^{{{\b_*}}+\dot{\a}+(s+r)p_*\d}v_0=\{0\}$ which in turn implies that $\LL^{\dot\a+rp_*\d}v=\{0\}$ (see (\ref{mm-11})) and so $v\in W$ as we desired. Next we need to show that  $\aa_v=\aa_{v_0}.$ For this, it is enough to prove $\cA_v\sub\cA_{v_0}.$
	Assume $\gamma\in\cA_{v}$. So, there exists $r\in \bbbz$ such that
	$\LL^{\gamma+rp_*\d}v\neq\{0\}$. We note that \[[\LL^{\gamma+rp_*\d},\LL^{{{\b_*}}+sp_*\d}]\sub \LL^{\gamma+{{\b_*}}+(r+s)p_*\d}.\] If  $\gamma+{{\b_*}}+(r+s)p_*\d\not \in R$, then $ \LL^{\gamma+{{\b_*}}+(r+s)p_*\d}=\{0\}$. Also if 
	${{\b_*}}+\gamma+(r+s)p_*\d\in R$, we have ${{\b_*}}+\gamma+(r+s)p_*\d=\dot \eta+q\d+kp_*\d$ for some  $\dot\eta\in\dot R,$ $k\in\bbbz$ and $0\leq q\leq p_*-1$ with $\dot\eta+q\d\in T.$ Since 
	\[\bs{f}(\dot\eta+q\d)=\bs{f}({{\b_*}}+\gamma)>\bs{f}({{\b_*}}),\] we have $ \LL^{\gamma+{{\b_*}}+(r+s)p_*\d}v=\{0\}.$ Therefore, we have
	
	\begin{align*}
		\{0\}\neq \LL^{\gamma+rp_*\d}v\sub \LL^{\gamma+rp_*\d}\LL^{{{\b_*}}+sp_*\d}v_0\sub \LL^{{{\b_*}}+sp_*\d}\LL^{\gamma+rp_*\d}v_0+[\LL^{\gamma+rp_*\d},\LL^{{{\b_*}}+sp_*\d}]v_0=\LL^{{{\b_*}}+sp_*\d}\LL^{\gamma+rp_*\d}v_0.
	\end{align*}
	In particular,   $\LL^{\gamma+rp_*\d}v_0\neq\{0\}$, and so $\gamma\in \aa_{v_0}.$ This completes the proof of the first step.
	
	\noindent{\bf Step~2.} $\cA_{v_0}=\emptyset:$ To the contrary, assume $\cA_{v_0}\neq \emptyset$ and pick $\b_*\in\cA_{v_0}$ with $\bs{f}({{\b_*}})=
	{\text{\rm{max}}}\{\bs{f}(\b)\mid \b\in \aa_{v_0}\}.$
	Since $\b_*\in\cA_{v_0}$, there is an integer $s_0$ such that $\LL^{\b_*+s_0p_*\d}v_0\neq \{0\}$. We fix	$0\neq v_1\in \LL^{\b_*+s_0p_*\d}v_0$. Using Step~1, we have $v_1\in W$ and $\cA_{v_1}=\cA_{v_0}$. So, we have $\b_*\in\cA_{v_1}$. Therefore, there is an  integer $s_1$ such that 
	$\LL^{\b_*+s_1p_*\d}v_1\neq \{0\}$. Continuing this  process, we find integers 	$s_i$'s ($i=0,1,2,3,\cdots$) and nonzero weight vectors $v_i$'s such that 
	\[\b_{*}+s_ip_*\d\in R\quad (i=0,1,2,\ldots)\andd 0\neq v_i\in \LL^{\b_{*}+s_{i-1}p_*\d}v_{i-1}\quad (i\geq 1).\]
	Suppose that $\mu_0$ is the weight of $v_0$. Since  for	$r\geq 1$, $v_r$ is of weight  
	$$\mu_0+r\b_*+(s_0+\cdots+s_{r-1})p_*\d,$$
	we have
	\begin{equation*}\label{mur}
		\mu_r:=\mu_0+r\b_*+(s_0+\cdots+s_{r-1})p_*\d\in \supp(V)\quad\quad(r\in\bbbz^{>0}).
	\end{equation*} 
	By \cite[Lem.~3.1]{DKY},  $\b_*$ is real.	
	Pick $r_0\in\bbbz^{>0}$ with
	\begin{equation}\label{positive}
		\begin{split}
			2(\mu_{r_0},\b_*)/(\b_*,\b_*)=2(\mu_0+r_0\b_*,\b_*)/(\b_*,\b_*)>0.
		\end{split}
	\end{equation}
	Since $\b_*\in T\cap R_{re}$, for each positive integer $n$, we have
	$$\pm(\b_*+(-(r-2)s_{r_0}+s_{r_0+1}+\cdots+s_{r_0+r-1})p_*\d)\in R^{ln}(V).$$
	Also, as the level of the module under consideration is zero, we have $(\mu_r,\d)=0.$ So,  by \eqref{positive} and \eqref{in-supp 1}, we have
	
	\begin{align*}
		& \mu_{r_0}+(r-1)(\b_*+p_*s_{r_0}\d)\\
		&= \mu_{r_0}+r\b_*+(p_*s_{r_0}+\cdots+p_*s_{r_0+r-1})\d-(\b_*+(-(r-2)p_*s_{r_0}+p_*s_{r_0+1}+\cdots+p_*s_{r_0+r-1})\d)\\
		&= \mu_{r}-(\b_*+(-(r-2)s_{r_0}+s_{r_0+1}+\cdots+s_{r_0+r-1})p_*\d)\\
		&\in \text{{supp}}(V)\quad\quad (r\in \bbbz^{>0}).
	\end{align*}
	This contradicts the fact that $\b_*+p_*s_{r_0}\d\in R^{ln}(V)$ and so, we are done.\qed
	\bigskip
	
	The following theorem together with \S~\ref{zero-level-M} give a complete characterization of zero-level simple integrable finite weight $\LL$-modules if $\LL\neq A(2m,2n)^{(4)}$ and a complete characterization of zero-level simple admissible $\LL$-modules if $\LL= A(2m,2n)^{(4)}.$
	
	\begin{Thm}\label{MMain} 
		Suppose that $V$ is a simple integrable finite weight $\LL$-module of level zero. Then, there is a linear functional $\bs{f}:\sspan_\bbbr R\longrightarrow \bbbr$ such that $R^\circ_{\bs{f}}=\bbbz\d$ with $\Omega:=V^{\LL^+_{\bs f}}\neq\{0\}$. In particular, $\Omega$ is a simple finite weight module over $\Bigop{k\in\bbbz}{}\LL^{k\d}$ and $V={\rm ind}^\LL_{\bs{f}}(\Omega).$ \end{Thm}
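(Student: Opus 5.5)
The plan is to take for $\bs f$ the functional already fixed before Proposition~\ref{main-mm} and satisfying (\ref{properties-functional}), then combine Proposition~\ref{main-mm} with Proposition~\ref{ind}. Three things have to be checked, in order.

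\textbf{Step 1: $R^\circ_{\bs f}=\bbbz\d$.} Since $\bs f(\d)=0$, we have $\bs f(\dot\a+k\d)=\bs f(\dot\a)$, so it suffices to show $\bs f(\dot\a)\neq 0$ for every $\dot\a\in\dot R\setminus\{0\}$. The relevant roots are listed in Table~\ref{table-ns}.
\begin{itemize}
\item Each $\dot\a\in\dot R_{re}$ lies in $\pm\sspan_{\bbbz^{\geq0}}\dot\Pi$, so $\bs f(\dot\a)\neq0$ by the choice of $\bs g$.
\item For a nonsingular root $\pm\ep_i\pm\d_p$, the condition $\min\bs g(\ep_i)>\max\bs g(\d_p)$ (respectively $\bs g(\ep_1)>\max|\bs g(\d_p)|$ in type $A(1,2n-1)^{(2)}$) gives $\bs f(\ep_i\pm\d_p)>0$. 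Hence $\bs f(-\ep_i\mp\d_p)<0$.
\end{itemize}
In type $A(1,2n-1)^{(2)}$ the element $\ep_1$ is not in $\sspan\dot S$. There I use that $\bs f$ extends $\bs g$ on $\sspan_\bbbr\dot R$, so the same computation applies. Hence $R^\circ_{\bs f}=R\cap\bbbz\d=\bbbz\d$. The last equality holds because every $k\d$ is an imaginary root by Table~\ref{table1}, and $0\in R$ by convention. Consequently $\LL^\circ_{\bs f}=\Bigop{k\in\bbbz}{}\LL^{k\d}$.

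\textbf{Step 2: $\Omega:=V^{\LL^+_{\bs f}}\neq\{0\}$.} Proposition~\ref{main-mm} gives a nonzero weight vector $v$ with $\LL^{\b+p_*r\d}v=0$ for all $r\in\bbbz$ and all $\b\in T$ with $\bs f(\b)>0$. Every root $\a$ with $\bs f(\a)>0$ can be written as $\a=\b+p_*r\d$, where $\b=\dot\a+q\d$ and $0\le q\le p_*-1$. I must check that $\b\in R$. This follows from (\ref{p}): $R+p_*\bbbz\d\sub R$, so subtracting $p_*r\d$ from $\a$ stays in $R$. Moreover $\bs f(\b)=\bs f(\a)>0$. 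Therefore $\LL^+_{\bs f}v=0$, and $v\in\Omega$.

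\textbf{Step 3: conclusion.} Proposition~\ref{ind} is stated for $P_\lam$ with $\lam$ a functional on $\sspan_\bbbr R$; here we take $\lam=\bs f$. It then gives that $\Omega$ is a simple finite $\hh$-weight $\LL^\circ_{\bs f}$-module, where $\LL^\circ_{\bs f}=\Bigop{k\in\bbbz}{}\LL^{k\d}$ by Step~1, and that $V\simeq{\rm Ind}^\LL_{\bs f}(\Omega)$. Since $V$ has level zero, so does $\Omega$, and the discussion in \S\ref{zero-level-M} then identifies $\Omega$ with a finite weight $\bbbz$-graded simple $\mathscr L$-module.

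\textbf{Main obstacle.} Essentially all the difficulty sits in Proposition~\ref{main-mm}, which we may assume. The remaining delicate point is the bookkeeping in Step~2: one must check, case by case from Table~\ref{table1}, that the reduction of $\a$ modulo $p_*\d$ lands in $T$. In particular, in type $A(2m,2n)^{(4)}$ the roots $\pm2\ep_i$ occur only with $(4\bbbz+2)\d$, and these still reduce correctly because $p_*=4$ in that type.
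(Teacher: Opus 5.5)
Your proposal is correct and follows the paper's own proof: take $\bs f$ from (\ref{properties-functional}), observe that $R^\circ_{\bs f}=\bbbz\d$, obtain $\Omega\neq\{0\}$ from Proposition~\ref{main-mm}, and conclude with Proposition~\ref{ind}. Your Steps~1 and~2 just spell out details the paper leaves implicit. In particular, your reduction via $R+p_*\bbbz\d\sub R$ already covers every type, so the case-by-case check you list as the main obstacle is not actually needed.
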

	\pf
	Let $\dot{S}$ be the finite root system  defined in Table \ref{table11} and consider $\bs{f}$ as in  (\ref{properties-functional}). In particular, we have  $R_{\bs{f}}^\circ=\bbbz\d$. We know from Proposition~\ref{main-mm} that $\Omega=\{v\in V\mid \LL_{\bs{f}}^+ v=\{0\}\}$ is nonzero and so by Proposition~\ref{ind}, we are done.\qed

\end{document}